\let\emph\undefined
\newcommand{\emph}[1]{\textsl{#1}}
\numberwithin{equation}{section}
\numberwithin{equation}{section}
\newtheoremstyle{style1}% name of the style to be used
{13pt}% measure of space to leave above the theorem. E.g.: 3pt
{13pt}% measure of space to leave below the theorem. E.g.: 3pt
{}% name of font to use in the body of the theorem
{}% measure of space to indent
{\normalfont\bfseries}% name of head font
{.}% punctuation between head and body
{.5em}% space after theorem head; " " = normal interword space
{}
\theoremstyle{style1}
\newtheorem{definition}[equation]{Definition}
\newtheorem{remark}[equation]{Remark}
\newtheoremstyle{style2}% name of the style to be used
{13pt}% measure of space to leave above the theorem. E.g.: 3pt
{13pt}% measure of space to leave below the theorem. E.g.: 3pt
{\slshape}% name of font to use in the body of the theorem
{}% measure of space to indent
{\normalfont\bfseries}% name of head font
{.}% punctuation between head and body
{.5em}% space after theorem head; " " = normal interword space
{}
\theoremstyle{style2}
\newtheorem{lemma}[equation]{Lemma}
\newtheorem{theorem}[equation]{Theorem}
\newtheorem{proposition}[equation]{Proposition}
\def\hocolim{\mathrm{hocolim}}
\newcommand{\Grpd}{\catf{Grpd}}
\newcommand{\Map}{\operatorname{Map}}
\newcommand{\PBun}{\catf{PBun}}
\newcommand{\cat}[1]{\mathcal{#1}}
\newcommand{\ball}{\mathbb{B}}
\newcommand{\catf}[1]{{\mathsf{#1}}}
\let\P\undefined
\newcommand{\P}{\mathsf{P}}
\newcommand{\Aut}{\operatorname{Aut}}
\newcommand{\Hom}{\operatorname{Hom}}
\newcommand{\im}{\operatorname{im}}
\newcommand{\ev}{\operatorname{ev}}
\newcommand{\id}{\operatorname{id}}
\newcommand{\TwoVect}{{2\mathsf{Vect}_\mathbb{C}}}
\newcommand{\Op}{{\catf{Op}_\mathfrak{C}}}
\newcommand{\SymS}{{\catf{Sym}_\mathfrak{C}}}
\newcommand{\PBr}{\mathsf{PBr}}
\newcommand{\Cob}{{\catf{Cob}}}
\newcommand{\Cat}{\catf{Cat}}
\newcommand{\U}{\operatorname{U}}
\newcommand{\Set}{\catf{Set}}
\let\to\undefined
\newcommand{\to}{\longrightarrow}
\let\mapsto\undefined
\newcommand{\mapsto}{\longmapsto}
\newcommand{\Alg}{\catf{Alg}}
\newcommand{\opp}{\text{opp}}
\newcommand{\disk}{\mathbb{D}}
\newcommand{\comp}{\mathsf{C}}
\DeclareMathSymbol{\Phiit}{\mathalpha}{letters}{"08} % kursive girghiche Buchstaben
\DeclareMathSymbol{\Psiit}{\mathalpha}{letters}{"09}
\DeclareMathSymbol{\Sigmait}{\mathalpha}{letters}{"06}
\DeclareMathSymbol{\Xiit}{\mathalpha}{letters}{"04}
\DeclareMathSymbol{\Piit}{\mathalpha}{letters}{"05}\let\Pi\undefined\newcommand{\Pi}{\Piit}
\DeclareMathSymbol{\Gammait}{\mathalpha}{letters}{"00}
\DeclareMathSymbol{\Omegait}{\mathalpha}{letters}{"0A}
\DeclareMathSymbol{\Upsilonit}{\mathalpha}{letters}{"07}
\DeclareMathSymbol{\Thetait}{\mathalpha}{letters}{"02}
\let\Phi\undefined\newcommand{\Phi}{\Phiit}
\let\Sigma\undefined\newcommand{\Sigma}{\Sigmait}
\let\Psi\undefined\newcommand{\Psi}{\Psiit}
\let\Gamma\undefined\newcommand{\Gamma}{\Gammait}
\begin{document}

	\vspace*{-1.5cm}
	\begin{flushright}
		\small
		{\sf EMPG--19--02} \\
		{\sf [ZMP-HH/19-1]} \\
		\textsf{Hamburger Beiträge zur Mathematik Nr.~771}\\
		\textsf{January 2019}
	\end{flushright}
	
	\vspace{5mm}
	
	\begin{center}
		\textbf{\LARGE{The Little Bundles Operad}}\\
		\vspace{0.5cm}
		{\large Lukas Müller $^{a}$} \ \ and \ \ {\large Lukas Woike $^{b}$}
		
		\vspace{5mm}
		
		{\em $^a$ Department of Mathematics\\
			Heriot-Watt University\\
			Colin Maclaurin Building, Riccarton, Edinburgh EH14 4AS, U.K.}\\
		and {\em Maxwell Institute for Mathematical Sciences, Edinburgh, U.K.}\\
		Email: \ {\tt lm78@hw.ac.uk \ }
		\\[7pt]
		{\em $^b$ Fachbereich Mathematik\\ Universit\"at Hamburg\\
			Bereich Algebra und Zahlentheorie\\
			Bundesstra\ss e 55, \ D\,--\,20\,146\, Hamburg }\\
		Email: \ {\tt  lukas.jannik.woike@uni-hamburg.de\ }
	\end{center}
	\vspace{0.3cm}
	\begin{abstract}\noindent 
		Hurwitz spaces are homotopy quotients of the braid group action on the moduli space of principal bundles over a punctured plane. 
	By considering a certain model for this homotopy quotient we build an aspherical topological operad that we call the \emph{little bundles operad}. 
	As our main result, we describe this operad as a groupoid-valued operad in terms of generators and relations and prove that the categorical little bundles algebras are precisely 
braided $G$-crossed categories in the sense of Turaev. Moreover, we prove that the evaluation on the circle of a homotopical two-dimensional equivariant topological field theory yields a little bundles algebra up to coherent homotopy. \\[2ex] \textbf{Keywords:} operad, topological field theory, braid group, monoidal category, braided monoidal category
	\end{abstract}

	\tableofcontents

\section{Introduction}
Consider for $r\ge 0$ an $r$-ary operation $f \in E_2(r)$ of the little disks operad $E_2$ \cite{bv68,bv73}, i.e.\ an affine embedding of $r$ disks into another disk, and the groupoid $\PBun_G(\comp(f))$ of $G$-bundles over the closed complement $\comp(f)$ of the image of the embedding $f$. 
Then the (pure) braid group on $r$ strands acts on the space $\PBun_G(\comp(f))$. 
The homotopy quotient is known as a \emph{Hurwitz space}.
We consider a model
$W_2(r)$ for this homotopy quotient which, by restriction to the boundary circles, comes with a Serre fibration $W_2(r) \to \Map(\mathbb{S}^1,BG)^{r+1}$
to the $r+1$-fold product of the free loop space of the classifying space of $G$. This allows us to prove that the fibers of this Serre fibration, considered for varying $r\ge 0$, combine into a topological $\Map(\mathbb{S}^1,BG)$-colored operad $E_2^G$ that we call the \emph{operad of little $G$-bundles}.

The operad $E_2^G$ of little bundles 
is aspherical and, as our main result, we exhibit a presentation as groupoid-valued operad in terms of generators and relations (Section~\ref{seccatalg}) using so-called  \emph{parenthesized $G$-braids}. This allows us to prove in Theorem~\ref{corlittlebundlesalg} that the categorical little bundles algebras are precisely %$G$-equivariant braided monoidal categories 
braided $G$-crossed categories -- a $G$-equivariant version of a monoidal braided category which is \emph{not} a braided category itself in the usual sense. This concept is due to Turaev \cite{turaevgcrossed,turaevhqft}; we, however, use a version of this notion \cite[Definition 5.4]{Coherence} allowing for more general coherence data  and omitting the requirement of rigidity (existence of duals). 
Crossed braided categories have been extensively studied e.g.\ in \cite{mueger,kirrilovg04,centerofgradedfusioncategories,enom,turaevhqft}. 
One ingredient of our proof is a recent coherence result for $G$-equivariant categories \cite{Coherence}.
In the existing literature the bookkeeping of the coherence data of a $G$-crossed braided monoidal category is done manually. Our operadic approach encodes this data in a compact way and allows to define $G$-crossed braided  algebras up to coherent homotopy beyond the category-valued case. In particular, it naturally leads to a notion of a
braided $G$-crossed
differential graded algebra or $\infty$-category.

The little bundles operad 
is closely related to the study of $G$-equivariant topological field theory \cite{turaevhqft,htv,hrt}, a flavor of topological field theory featuring bordisms equipped with principal $G$-bundles. The introduction of the bundle decoration leads to interesting phenomena such as a certain non-commutativity of the algebraic structures that can be extracted from the field theory. 

Some aspects of the little bundles operad introduced in the present paper are implicit in the literature on equivariant topological field theories
because it captures the genus zero part of surfaces decorated with $G$-bundles. This becomes manifest in the following observations:
 The extraction of $G$-crossed algebras from two-dimensional $G$-equivariant topological field theories valued in vector spaces via evaluation on the circle \cite{kaufmannorb,turaevhqft} can be understood as a statement about the path connected components of the little bundles operad (Remark~\ref{remGcrossedalgebras}). Similarly, the interpretation of the coherence diagrams for  
braided $G$-crossed categories 
in terms of low-dimensional topology as carried out in \cite{maiernikolausschweigerteq} for category-valued field theories constructed from Drinfeld doubles and in \cite{extofk} in the general case can be phrased in a unified way via the little bundles operad. 

The operadic approach is particularly well-suited for the study of $(\infty,1)$ equivariant topological field theories \cite{MuellerWoikeHH}. As a main application,
 we prove that in dimension two the value of such a theory on the circle produces a homotopy little bundles algebra
(Theorem~\ref{thmequivTFTcircle}), i.e.\
an algebra over the Boardman-Vogt resolution of the little bundles operad. 
This is a generalization of the well-known result that the value of a two-dimensional topological field theory on the circle is an $E_2$-algebra \cite{getzler,bzfn}.

	Our definition of the little bundles operad is in fact obtained as special case of a more general construction allowing both for higher dimensional disks and non-aspherical target spaces. Although the present work mainly focuses on the little bundles operad, the applications of its generalization to the study of topological field theories with non-aspherical target are indicated in Proposition~\ref{propononasphericaltarget}.

		\subparagraph{Acknowledgements and index of notation.}
	We would like to thank 
	Adrien Brochier, 
	Najib Idrissi,
	Christoph Schweigert,
	Richard Szabo and
	Nathalie Wahl
	for helpful discussions. 
We are grateful to the anonymous referee for numerous useful comments that helped improve the article.

	LM is supported by the Doctoral Training Grant ST/N509099/1 from the UK Science and Technology Facilities
	Council (STFC).
	LW is supported by the RTG 1670 ``Mathematics inspired by String theory and Quantum
	Field Theory''.

		\begin{center}
			
			\begin{tabularx}{\textwidth}{X p{0.6\textwidth} c}
				\textsc{Symbol} & \textsc{Explanation} & \textsc{Page} \\ 
				$E_n$ & little disk operad & \pageref{littlediskoperad}\\ 
				$\disk^n$ & closed $n$-dimensional standard disk & \pageref{standarddisk}\\
				$\comp(f)$ & complement of little disk embedding $f$  & \pageref{complementldo} \\ 
				$W_n(r)$ & space of pairs of embeddings of $r$ little disks and points in the complement  & \pageref{wnspace}\\
				$\mathbb{S}^{n}$ & $n$-dimensional standard sphere & \pageref{standardsphere}\\
				$W_n^T(r)$ & space of pairs of embeddings of $r$ little disks and maps to a space $T$ defined on the complement of the embedding & \pageref{spacewnt}\\
				$E_n^G$ & little bundles operad & \pageref{deflittlebundles}\\
				$\PBun_G(X)$ & groupoid of $G$-bundles over a space $X$ & \pageref{eqnmapspacebundlegrpd} \\
				$P_r$ & pure braid group on $r$ strands & \pageref{e2braidgroupeqn} \\
				$B_r$ & braid group on $r$ strands & \pageref{eqndescriptionE2} \\
				$q^{-1}[y]$ & homotopy fiber of a map $q: X \to Y$ over $y \in Y$ & \pageref{homotopyfiberpage}\\
				$\widehat{-}$ & equivalence between the groupoid of $G$-bundles over the circle and the loop groupoid of $G$ & \pageref{eqnweakinverse}\\
				$\PBr^G$ & parenthesized $G$-braids & \pageref{pbrref} \\
				$\P^G$ & operad of $G$-parentheses & \pageref{refpgauxop} \\
				$G\text{-}\Cob(n)$ & symmetric monoidal $(\infty,1)$-category of $n$-dimensional bordisms decorated with maps to $BG$ & \pageref{defZGCob}\\
				$\widehat\comp(f)$ & fattened version of $\comp(f)$ & \pageref{fatcomp}\\
				$\widehat W_n(r)$ & fattened version of $W_n(r)$ & \pageref{fatW}\\
				$\ball_\varepsilon(x_0)$ &open ball of radius $\varepsilon>0$ and center $x_0$ & \pageref{openballeps}
				\end{tabularx}
			
			\end{center}

	\section{Maps on complements of little disks}
	 For $n\ge 1$ let $E_n$ be the little $n$-disks operad.
	 Recall that the operations $E_n(r)$\label{littlediskoperad} in arity $r\ge 0$ for this topological operad are given as follows:
	Denote by $\disk^n$ the closed $n$-dimensional disk $\disk^n := \{  x \in\mathbb{R}^n \, | \, |x|^2\le 1\}$.\label{standarddisk} Then $E_n(r)$ is given by the space of all maps $f: \coprod_{k=1}^r \disk^n \to \disk^n$ such that
	\begin{itemize}
		\item the restriction $f_k : \disk^n \to \disk^n$ of $f$ to the $k$-th disk is an affine embedding, 
		i.e.\ given by a rescaling of the radius and a translation,
		\item for $1\le j<k\le r$ the interiors of the images of $f_j$ and $f_k$ do not intersect, 
		i.e.\ $\overset{\circ}{\im f_j} \cap \overset{\circ}{\im f_k} = \emptyset$.
		\end{itemize}
		For details we refer e.g.\ to \cite[Chapter~4]{FresseI}.
		
We write \label{complementldo}	$	\comp (f) := \disk^n \setminus \overset{\circ}{\im f} 	$ for the complement of the interior of the image of $f$ in the closed $n$-disk.
 This allows us to the define the subspace\label{wnspace}
	\begin{align}
	W_n(r) := \{ (f,x) \in E_n(r) \times \disk^n \, | \, x \in \comp(f)  \} \subset E_n(r) \times \disk^n\ .
	\end{align} 
The boundary $\partial \comp(f)$ of $\comp(f)$ consists of $r$ spheres $\mathbb{S}^{n-1}$\label{standardsphere} (the \emph{ingoing boundary} or \emph{ingoing spheres}) possibly wedged together and sitting inside a bigger sphere
 $\mathbb{S}^{n-1}$ (the \emph{outgoing boundary} or \emph{outgoing sphere}). 
 
The goal of this section is to define an operad $E_n^T$ whose colors are maps from $\mathbb{S}^{n-1}$ to some fixed topological space $T$ and whose operations from an $r$-tuple $\underline{\varphi}=(\varphi_1,\dots,\varphi_r)$ of maps $\mathbb{S}^{n-1}\to T$
(generally, we will use an underline to indicate tuples)
to a single map $\psi : \mathbb{S}^{n-1} \to T$ will be given by all operations $f \in E_n(r)$ together with maps $\comp (f)\to T$ whose restriction to $\partial \comp(f)$ is $(\underline{\varphi},\psi)$. 

We will introduce the operad $E_n^T$ formally in Section~\ref{secopent}. 
The definition will not be as a homotopy quotient of a braid group action (as mentioned in the introduction), but will make use of the auxiliary constructions in Section~\ref{secspacesWTaux}. The description using a homotopy quotient is then discussed in Section~\ref{sechomotopyquotient}. 

\subsection{The auxiliary spaces $W_n^T(r)$\label{secspacesWTaux}}
For a topological space $T$ we define
$W_n^T(r)$\label{spacewnt} as the set of pairs $(f,\xi)$,
where $f \in E_n(r)$ and $\xi : \comp(f) \to T$ is a continuous map. 
Projection onto the first factor yields a map
	\begin{align} 
	p : W_n^T(r) \to E_n(r)
	\end{align} of sets.
	We equip $W_n^T(r)$ with the final topology with respect to all maps of sets $g: Y \to W_n^T(r)$
	from arbitrary topological spaces $Y$ to the set $W_n^T$ such that
	\begin{enumerate} [label=(\Roman*)]
		\item $p \circ g$ is a continuous map, \label{finaltop1}
		
		\item and the natural map $W_n(r)\times_{E_n(r)}Y  \to  T $ is continuous. \label{finaltop2}
	\end{enumerate}
The further investigation of the spaces $W_n^T$ involves some elementary, but tedious point-set topology and is carried out in Appendix~\ref{appwspaces}. We only mention the crucial properties necessary in the sequel:

\begin{proposition}[Appendix, Proposition~\ref{appLemma: Serre fibration}]\label{Lemma: Serre fibration}
The map $p : W_n^T(r) \to E_n(r) $ is a Serre fibration. 
\end{proposition}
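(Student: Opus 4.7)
The plan is to verify the homotopy lifting property against disks directly. Fix $k \geq 0$ and consider a commutative square with top map $\tilde h \colon \disk^k \to W_n^T(r)$ and bottom homotopy $H \colon \disk^k \times [0,1] \to E_n(r)$ satisfying $p \circ \tilde h = H(-, 0)$. Writing $f_{s,t} := H(s,t)$ and $\tilde h(s) = (f_{s,0}, \xi_{s,0})$ with $\xi_{s,0} \colon \comp(f_{s,0}) \to T$, the task reduces to constructing a continuous lift of the form $\tilde H(s, t) = (f_{s,t}, \xi_{s,t})$ extending $\tilde h$.

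The geometric heart of the argument is to produce, from the continuous family $f_{s,t}$ of little disk embeddings, a continuous family of continuous self-maps $\Phi_{s,t} \colon \disk^n \to \disk^n$ satisfying $\Phi_{s,0} = \id$ and $\Phi_{s,t}(\comp(f_{s,t})) \subset \comp(f_{s,0})$. Since each $f \in E_n(r)$ is specified by $r$ centers and radii in $\disk^n$, such a $\Phi_{s,t}$ can be assembled by interpolating between the individual affine embeddings and gluing the resulting local deformations via a partition of unity subordinate to slightly enlarged neighborhoods of the little disks. Crucially, $\Phi_{s,t}$ only needs to be a continuous map rather than a homeomorphism: in $E_n(r)$ distinct little disks are permitted to have tangent boundary spheres, so no ambient isotopy need exist in general.

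Once $\Phi_{s,t}$ has been constructed, I would set $\xi_{s,t} := \xi_{s,0} \circ \Phi_{s,t}\big|_{\comp(f_{s,t})}$ and $\tilde H(s, t) := (f_{s,t}, \xi_{s,t})$. To conclude that $\tilde H$ is continuous into $W_n^T(r)$ equipped with its final topology, it suffices to verify the two defining conditions for $g = \tilde H$ and $Y = \disk^k \times [0,1]$. Condition~(I) is immediate since $p \circ \tilde H = H$ is continuous by hypothesis. For condition~(II), the associated map $W_n(r) \times_{E_n(r)} (\disk^k \times [0,1]) \to T$ factors as $(x, s, t) \mapsto (\Phi_{s,t}(x), s) \mapsto \xi_{s,0}(\Phi_{s,t}(x))$, which is continuous because the first factor uses joint continuity of $\Phi$ and the second factor is continuous by condition~(II) applied to the given lift $\tilde h$ over $Y = \disk^k$.

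The main obstacle is the construction of $\Phi_{s,t}$ with the required joint continuity in $(s, t, x)$ when little disks may become tangent during the homotopy $H$. This forces a local-to-global patching argument and is the source of the tedious point-set bookkeeping that the authors relegate to the appendix. The universal property of the final topology, however, spares us from checking any further continuity conditions on $\tilde H$ beyond (I) and (II), which is what makes the strategy workable despite the absence of global ambient isotopies.
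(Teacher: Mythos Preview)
Your overall framework is sound and matches the paper: lift $\xi$ by composing with a suitable ``retraction'' of complements, then verify continuity of $\widetilde H$ using only conditions~(I) and~(II) of the final topology. The factorization you give for condition~(II) is exactly right.

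The gap is in the construction of $\Phi_{s,t}$. Your partition-of-unity recipe does not guarantee $\Phi_{s,t}(\comp(f_{s,t})) \subset \comp(f_{s,0})$. Concretely: away from the slightly enlarged neighborhoods of the disks of $f_{s,t}$, the partition of unity forces $\Phi_{s,t}$ to be (close to) the identity. But if the homotopy $H$ moves the disks substantially, such points can lie in the interior of a disk of $f_{s,0}$, so the identity sends them outside $\comp(f_{s,0})$. Enlarging the neighborhoods to cover both $f_{s,t}$ and $f_{s,0}$ does not help either, since for $r\geq 2$ these neighborhoods can then overlap and the convex interpolation of two different affine maps no longer respects complements. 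You correctly flag this as the ``main obstacle'', but the sketch you give does not resolve it.

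The paper sidesteps this entirely by invoking that being a Serre fibration is a local property of the base. It fixes $f_0 \in E_n(r)$ and an $\varepsilon$ small enough that every $f$ in a neighborhood $U_\varepsilon$ has its $j$-th disk containing the $\varepsilon$-ball about the $j$-th center of $f_0$. Over $U_\varepsilon$ one then \emph{extends} $\xi_{s,0}$ radially (constantly along rays toward the centers of $f_{s,0}$) to a map on the common region $\disk^n \setminus \bigcup_j \ball_\varepsilon(c_j)$, and simply restricts to $\comp(f_{s,t})$. No partition of unity, no self-maps $\Phi_{s,t}$, no issue with tangent disks. Your global approach could be salvaged by subdividing $[0,1]$ so that on each piece the disks move only slightly, but this is precisely what the local-in-the-base argument accomplishes more cleanly.
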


Next we define the subspace
  \begin{align}
  \partial W_n(r) := \{(f,x) \in E_n(r) \times \disk^n \, | \, x \in \partial \comp(f)\} \subset W_n(r) \ 
  \end{align} and note that there is a natural map \begin{align} E_n(r) \times \coprod^{r+1} \mathbb{S}^{n-1} \to \partial W_n(r)\label{eqncopiesofspheres}\end{align} of spaces over $E_n(r)$
 identifying the first $r$ copies of $\mathbb{S}^{n-1}$ with the ingoing boundary spheres and the last copy with the outgoing boundary spheres.
This map is generally not a homeomorphism since some of the boundary spheres might be wedged together. 
By combining the restriction of the evaluation map 
  \begin{align}
  \partial W_n(r)\times_{E_n(r)} W_n^T(r) \to T
  \end{align} (which is continuous by Lemma~\ref{lemma1})
  with \eqref{eqncopiesofspheres} we obtain a map
  \begin{align} \left(  \coprod^{r+1} \mathbb{S}^{n-1} \right) \times W_n^T(r) \to T \end{align} which, by adjunction, gives us a map
  \begin{align} q: W_n^T(r) \to\prod^{r+1} \Map (\mathbb{S}^{n-1},T) \ .    \label{therestrictionqeqn}   \end{align}

\begin{proposition}[Appendix,~Proposition~\ref{applemmaqisfibration}]\label{lemmaqisfibration} 
 The map $q: W_n^T(r) \to \prod^{r+1} \Map (\mathbb{S}^{n-1},T)$ is a Serre fibration.
\end{proposition}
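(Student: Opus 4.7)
The plan is to verify the homotopy lifting property for $q$ by a direct construction. A lifting problem against the pair $(D^k\times\{0\},D^k\times I)$ consists of a map $h_0 : D^k \to W_n^T(r)$, $s\mapsto (f_s,\xi_s)$, together with a homotopy $H: D^k\times I \to \prod^{r+1}\Map(\mathbb{S}^{n-1},T)$, $(s,t)\mapsto(\underline\varphi_{s,t},\psi_{s,t})$, whose restriction to $D^k\times\{0\}$ coincides with the boundary restriction of $\xi_s$; the task is to produce a lift $h : D^k\times I \to W_n^T(r)$.

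The key observation is that $q$ only records the boundary values of $\xi$ and imposes no constraint on the embedding datum $f$. The natural strategy is therefore to keep $f_s$ essentially constant in $t$, so that $\comp(f_{s,t})$ stays the same along the homotopy, and to construct $h$ by extending the prescribed boundary homotopy $H$ inward to a homotopy $\xi_{s,t}: \comp(f_s)\to T$ with $\xi_{s,0}=\xi_s$. This reduces the task to a parametric homotopy extension problem for the inclusion $\partial\comp(f)\hookrightarrow\comp(f)$. For each fixed $f$ this is a cofibration since $\comp(f)$ is a compact manifold with corners whose boundary admits a collar; choosing a collar that varies continuously with $f\in E_n(r)$, the desired extension is obtained by keeping $\xi_s$ unchanged away from the collar and interpolating inside the collar between $\xi_s\vert_{\partial\comp(f_s)}$ and the prescribed new boundary values.

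Two technical subtleties must be addressed. First, when some disks of $f_s$ touch, the spheres in $\partial\comp(f_s)$ are wedged and the image of $q$ over such an $f_s$ is constrained to the subspace of $\prod^{r+1}\Map(\mathbb{S}^{n-1},T)$ satisfying the corresponding coincidence conditions at the wedge points; since $H$ need not respect these constraints for $t>0$, one first separates the touching disks by a slight shrinking, producing an auxiliary family $f_{s,t}$ with $f_{s,0}=f_s$ whose disks are strictly disjoint for $t>0$, and then fills the newly created annular region using the boundary data. Second, the construction must be continuous with respect to the final topology on $W_n^T(r)$ defined by conditions \ref{finaltop1} and \ref{finaltop2}, which ultimately boils down to choosing the collar family and the interpolation continuously across the parameter space. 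This last step, genuinely tedious but conceptually straightforward, is the principal obstacle and is carried out in the appendix via the fibration property of $p$ (Proposition~\ref{Lemma: Serre fibration}) combined with elementary point-set arguments on the spaces $W_n(r)$ and $\partial W_n(r)$.
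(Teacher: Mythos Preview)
Your approach is correct in spirit but more elaborate than the paper's actual argument, and your guess about how the appendix handles the continuity issue is off.

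The paper does \emph{not} try to keep $f_s$ constant and invoke a parametric homotopy extension property. Instead, it varies $f_s$ with $t$ in a completely explicit way: it shrinks each inner disk by the factor $1-t/2$, enlarges the outer disk to radius $1+t$, and then rescales the whole picture by $1/(1+t)$ to land back in $E_n(r)$. This manoeuvre carves out, at each boundary sphere, an explicit annular region of ``width'' $t$, and one simply glues the restriction $h_j^x|_{[0,t]}$ of the prescribed boundary homotopy onto that annulus, leaving the original $K''(x)$ on the rest of the complement. This single step accomplishes everything at once: it separates any touching disks (so your first subtlety never needs a separate treatment), it produces the collar by construction rather than by a choice of collar family, and it is manifestly continuous because every ingredient is given by an explicit formula in $(x,t)$.

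In particular, the paper's proof does not appeal to the Serre fibration property of $p:W_n^T(r)\to E_n(r)$, nor to any abstract statement about cofibrations or parametric collars; it is a two-line direct construction. Your route via the homotopy extension property for $\partial\comp(f)\hookrightarrow\comp(f)$ with a continuously varying collar would also succeed, but it buys you extra bookkeeping (the touching-disk case, the continuity of the collar family in the final topology) that the paper's trick sidesteps entirely.
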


\subsection{The operad $E_n^T$\label{secopent}}
From the map $q$ we can construct a topological operad colored over the set of maps $\mathbb{S}^{n-1} \to T$:
 Let $\underline{\varphi}=(\varphi_1,\dots,\varphi_r)$ be an $r$-tuple of maps $\mathbb{S}^{n-1} \to T$. Then for another map $\psi : \mathbb{S}^{n-1} \to T$ we consider the fiber $E_n^T \binom{\psi}{ \underline{\varphi}}$ of $q$ over $(\underline{\varphi},\psi)$, i.e. the pullback
    		\begin{equation}
   \begin{tikzcd}
   E_n^T \binom{\psi}{ \underline{\varphi}} \ar{rr}{} \ar[swap]{dd}{} & & W_n^T(r) \ar{dd}{q} \\
   & & \\
   \star \ar{rr}{(\underline{\varphi},\psi)}  & & \prod^{r+1} \Map (\mathbb{S}^{n-1},T) \ , 
   \end{tikzcd} \label{defEnTeqn}
   \end{equation} 
which is also a homotopy pullback since $q$ is a Serre fibration by Proposition~\ref{lemmaqisfibration}. Explicitly, $E_n^T \binom{\psi}{ \underline{\varphi}}$ consists of elements $f \in E_n(r)$ together with a map $\xi : \comp(f) \to T$ whose restriction to $\partial \comp(f)$ is given by $(\underline{\varphi},\psi)$. 
We will denote the point in $E_n^T \binom{\psi}{ \underline{\varphi}}$ formed by $f$ and $\xi$ by $\langle f,\xi\rangle$. 
  
The operad structure on $E_n^T$, which will be defined now, makes use of the operad structure of $E_n$ for which we refer to \cite[Chapter~4]{FresseI}. The operadic identity $\star \to E_n^T \binom{\varphi}{ \varphi}$ is the operadic identity in $E_n(1)$, namely the identity embedding $I : \disk^n \to \disk^n$, together with $\varphi : \comp(I) = \mathbb{S}^{n-1} \to T$. 
Moreover, the action of the symmetric group $\Sigma_r$ on $r$ letters on $E_n(r)$, for all $r\ge 0$,
turns  $E_n^T $ into a $\Map (\mathbb{S}^{n-1},T)$-colored symmetric sequence.

The operadic composition consists of maps
\begin{align}
\circ : E_n^T \binom{\psi}{ \underline{\varphi}} \times  \prod_{j=1}^r E_n^T \binom{\varphi_j}{\underline{\lambda}_j} \to E_n^T \binom{\psi}{\otimes_{j=1}^r \underline{\lambda}_j}\ ,
\end{align} where $\otimes$ denotes the juxtaposition of tuples. It sends
\begin{align}
 \left(\langle f,\xi \rangle ,  \prod_{j=1}^r \langle g_j , \mu_j \rangle      \right) \in E_n^T \binom{\psi}{ \underline{\varphi}} \times  \prod_{j=1}^r E_n^T \binom{\varphi_j}{\underline{\lambda}_j}
\end{align}
to
\begin{align}
\left\langle f \circ \underline{g}   ,    \xi \cup_{ \coprod^{r} \mathbb{S}^{n-1}   }   \underline{\mu}   \right\rangle     \in E_n^T \binom{\psi}{\otimes_{j=1}^r \underline{\lambda}_j} \ ,
\end{align} where \begin{itemize}
	\item the composition of $f$ with the $r$-tuple $\underline{g}$ of embeddings is formed via the composition in $E_n$,
	\item we use that for $1\le j \le r$ the restriction of $\mu_j : \comp(g_j) \to T$ 
	 to the last copy of $\mathbb{S}^{n-1}$ (the outer sphere) is precisely $\varphi_j$ in order to glue $\xi$ and  $\underline{\mu}$ along $r$ copies of $\mathbb{S}^{n-1}$.
	\end{itemize} 

\begin{proposition}\label{propouroperadT}
	Let $T$ be any space. With the above definitions
	$E_n^T$ is a topological operad colored over $\Map (\mathbb{S}^{n-1},T)$.
	\end{proposition}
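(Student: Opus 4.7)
The plan is to leverage the fact that $E_n$ is already a topological operad and that the coloring and composition on $E_n^T$ are built by equipping little disk configurations with maps on their complements; thus most operad axioms will reduce to the corresponding axioms in $E_n$ together with the evident associativity and unitality of gluing continuous maps along common boundary spheres. First I would verify well-definedness of the composition on elements: given $\langle f,\xi\rangle$ with $\xi|_{\partial\comp(f)}=(\underline\varphi,\psi)$ and $\langle g_j,\mu_j\rangle$ with $\mu_j|_{\partial\comp(g_j)}=(\underline\lambda_j,\varphi_j)$, the complement $\comp(f\circ\underline g)$ decomposes as $\comp(f)\cup_{\coprod^r\mathbb{S}^{n-1}}\coprod_{j=1}^r\comp(g_j)$ (up to the rescaling built into composition in $E_n$), so the matching of $\xi$ and $\mu_j$ on the $j$-th ingoing sphere of $f$ and the outgoing sphere of $g_j$ yields a well-defined continuous map with the correct boundary values; hence the pair lies in the claimed fiber $E_n^T\binom{\psi}{\otimes_j\underline\lambda_j}$.

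The second and main step is continuity of this composition. Rather than working on the pullback fibers directly, I would first establish continuity of a global composition map
\begin{align}
W_n^T(r)\times_{\prod^r\Map(\mathbb{S}^{n-1},T)}\prod_{j=1}^rW_n^T(r_j)\to W_n^T\!\Bigl(\sum_j r_j\Bigr)
\end{align}
and then obtain $E_n^T$-composition by restricting to fibers. By the defining universal property of the final topology on the target $W_n^T(\sum_j r_j)$, it is enough to check the two conditions (I) and (II) from Section~2.1. Condition (I) is the continuity of the projection, which factors through the composition $E_n(r)\times\prod_j E_n(r_j)\to E_n(\sum_j r_j)$, continuous because $E_n$ is a topological operad. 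Condition (II) demands continuity of the evaluation map on the fibered product with $W_n(\sum_j r_j)$; this unpacks to the statement that a point $x$ of the glued complement is sent to $\xi(x)$ or $\mu_j(x)$ according to which piece of the decomposition it lies in, and continuity across the identification spheres follows from the matching of boundary values built into the fibered product. Here I will use Lemma~\ref{lemma1} (continuity of evaluation on $\partial W_n(r)\times_{E_n(r)}W_n^T(r)$) to handle neighborhoods of the gluing spheres.

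Once continuity is established, the operad axioms are straightforward. The unit $\langle I,\varphi\rangle \in E_n^T\binom{\varphi}{\varphi}$ acts as a two-sided identity because $I$ is the unit in $E_n$ and gluing with the identity embedding on the one side or with $\varphi$ on the outgoing sphere on the other side does not alter $\xi$ or $\mu_j$ up to the canonical identification of complements. Associativity follows by applying the $E_n$-associativity to the underlying little disk configurations and observing that the two ways of iteratively gluing maps on complements produce, in both cases, the unique continuous extension to the triple decomposition of the final complement; equivariance under the symmetric group action reduces similarly to $\Sigma_r$-equivariance of $E_n$ combined with the permutation of the tuples of boundary maps.

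The main obstacle I anticipate is the continuity argument in step two, specifically handling the behavior near the gluing spheres where disks of $f\circ\underline g$ collide or nearly collide; the final topology on $W_n^T(\sum r_j)$ was engineered precisely to accommodate this, so I expect the proof to consist of carefully writing the test space $Y$ in condition (II) as the fibered product of the test spaces for the factors and invoking Proposition~\ref{Lemma: Serre fibration} and its proof from the appendix.
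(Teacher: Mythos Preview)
Your proposal is correct and follows essentially the same strategy as the paper: reduce everything to continuity of composition, lift the composition to a map between $W_n^T$ spaces, and verify conditions~\ref{finaltop1} and~\ref{finaltop2} for the final topology on the target. The paper differs only in presentation: it works with partial compositions $\circ_j$ rather than the full composition, and it packages the verification of~\ref{finaltop2} into a separate appendix lemma (Lemma~\ref{lemmagluingmaps}) which formalizes your informal ``decompose the glued complement and match on the gluing sphere'' argument via an explicit pushout description of $W_n(r+r'-1)\times_{E_n(r+r'-1)}(W_n^T(r)\times_{\Map(\mathbb{S}^{n-1},T)}W_n^T(r'))$, then factors the evaluation through that pushout and Lemma~\ref{lemma1}. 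Your anticipated invocation of Proposition~\ref{Lemma: Serre fibration} is unnecessary here; the continuity of~\ref{finaltop2} follows purely from the pushout decomposition and the already-established continuity of evaluation, without any fibration input.
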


\begin{proof}
	The only non-trivial point is the continuity of the composition maps.
	It suffices to prove that the partial compositions
	\begin{align}
	\circ_j : E_n^T \binom{ \psi  }{\varphi_1 , \dots , \varphi_j, \dots , \varphi_r}     \times E_n^T \binom{ \varphi_j  }{\underline{\lambda}}     \to E_n^T \binom{ \psi }{\varphi, \dots ,\underline{\lambda}, \dots , \varphi_r }  \label{eqnjthcomposition}
	\end{align} are continuous. 
	
	To this end, set $r' := |\underline{\lambda}|$ and 
	consider the restriction  $W_n^T(r) \to \Map(\mathbb{S}^{n-1},T)$ to the outer boundary sphere
	and the restriction $W_n^T(r') \to \Map(\mathbb{S}^{n-1},T)$ to the $j$-th ingoing boundary sphere and 
	observe that $E_n^T \binom{ \psi  }{\varphi_1 , \dots , \varphi_j, \dots , \varphi_r}     \times E_n^T \binom{ \varphi_j  }{\underline{\lambda}} $ is a subspace of the pullback
	$W_n^T(r) \times_{  \Map(\mathbb{S}^{n-1},T)  } W_n^T(r')$ 
	such that \eqref{eqnjthcomposition} is the restriction of the map
	\begin{align}
	\widehat{\circ}_j : W_n^T(r) \times_{  \Map(\mathbb{S}^{n-1},T)  } W_n^T(r') \to W_n^T (r+r'-1), \quad\left(   \langle f,\xi\rangle  ,  \langle f',\xi'  \rangle \right) \mapsto \langle  f \circ f'  ,  \xi \cup_{\mathbb{S}^{n-1}}^j \xi' \rangle  .  \label{eqnjthcompositionalt}
	\end{align} 
	Here $f \circ_j f'$ is the (partial) operadic composition in $E_n$ and $\xi \cup_{\mathbb{S}^    {n-1} }^j \xi'$ is the map obtained from gluing $\xi$ and $\xi'$ along the $j$-th sphere $\mathbb{S}^{n-1}$ in the domain of definition of $\xi$. The statement follows now from Lemma~\ref{lemmagluingmaps}
	asserting that $\widehat{\circ}_j$ is continuous.
		\end{proof}

	\begin{remark}\label{remEnTsigmacof}
An operad is \emph{$\Sigma$-cofibrant} if the underlying symmetric sequence is cofibrant in the projective model
structure. This property is important for the homotopy theory of algebras over this operad and needed later in   
Section~\ref{secgenrel}.
	The model for $E_n$ used in this article is $\Sigma$-cofibrant \cite[page~140]{FresseI}. By the same arguments, $E_n^T$ is $\Sigma$-cofibrant.
	\end{remark}

\section{The operad $E_2^G$ of little $G$-bundles}
Let us specialize the operad from Proposition~\ref{propouroperadT} to aspherical spaces $T$ to obtain what we will refer to as \emph{little bundles operad}.

Recall that a space or simplicial set $T$ called \emph{aspherical} if $\pi_k(T)=0$ for $k\ge 2$ and all choices of basepoints. Accordingly, we call an operad in spaces or simplicial sets \emph{aspherical} if all its components are aspherical. 
	
If $T$ is an aspherical space (which we will assume to be connected without loss of generality), then, up to equivalence, $T$ is the classifying space of its fundamental group $G$.
 Therefore, we set $E_n^G := E_n^{BG}$ for any (discrete) group $G$.

For a manifold $X$ (possibly with boundary) the mapping space $\Map(X,BG)$ is the nerve $B\PBun_G(X)$ 
of the groupoid of principal $G$-bundles over $X$, i.e.\
\begin{align} \Map(X,BG) \simeq B\PBun_G(X) \ , \label{eqnmapspacebundlegrpd}\end{align} 
which implies 
\begin{align} \Pi \Map(X,BG) \simeq \PBun_G(X) \ , \label{eqnmapspacebundlegrpd2}
\end{align}
where $\Pi$ denotes the fundamental groupoid functor.
A proof of these well-known facts is given in \cite[Lemma~2.8]{extofk}.

 In particular, $\Map(X,BG)$ is aspherical again with
\begin{align}
\pi_0(\Map(X,BG)) &\cong \pi_0( \PBun_G(X)  ) \ , \label{eqnmapspace1}\\ 
\pi_1 ( \Map(X,BG) , \varphi   ) &\cong \Aut( \varphi^* EG ) \ , \label{eqnmapspace2}
\end{align} 
where $\varphi^* EG$ is the pullback of the universal $G$-bundle $EG \to BG$ along a map $\varphi : X \to BG$ and where we denote by $\Aut(P)$ the group of automorphisms of a $G$-bundle $P$ (the group of \emph{gauge transformations}). 

Recall that if $X$ is connected, we find
\begin{align}
\PBun_G(X) \simeq \Hom(\pi_1(X),G) //G \label{holonomyclassification}
\end{align} by the holonomy classification of $G$-bundles, see e.g.\
\cite[Theorem~13.2]{taubes},
i.e.\ after choice of a basepoint in $X$,
the bundle groupoid $\PBun_G(X)$ is equivalent to the action groupoid associated to the action of $G$ by conjugation on the set of group morphisms $\pi_1(X)\to G$. 

Note that for $n> 2$ the operad $E_n^G$ is not really interesting  since 
all $G$-bundles over $\mathbb{S}^{n-1}$ for $n>2$ are trivializable. The case relevant to us is $n=2$: 
\begin{definition}\label{deflittlebundles}
We call the topological operad $E_2^G$ the \emph{little bundles operad}.
\end{definition}

In the remaining subsections of this section, we will show that $E_2^G$ is aspherical (Proposition~\ref{propolittlebundlesaspherical})
and will explicitly describe its components as action groupoids (Proposition~\ref{propoE2Gactiongroupoid}). 

 	\subsection{The space $W_2^G$ as a Hurwitz space\label{sechomotopyquotient}}
	In a first step we investigate the spaces $W_2^G(r) := W_2^{BG}(r)$ for a group $G$.
	
For this recall from \cite[Chapter~5]{FresseI} that $E_2(r)$ is the classifying space of the pure braid group $P_r$ on $r$ strands, i.e.
	\begin{align}
	E_2(r) \simeq BP_r \ . \label{e2braidgroupeqn}
	\end{align}
Alternatively (and for our applications more conveniently), we can describe the fundamental groupoid $\Pi E_2(r)$ as the action groupoid 
\begin{align}  
	\Pi E_2(r) \simeq \Sigma_r // B_r\ , \label{eqndescriptionE2}\end{align} where the braid group $B_r$ acts on $\Sigma_r$ by $c.\sigma := \pi(c) \sigma$ for $c\in B_r$ and $\sigma \in \Sigma_r$, i.e.\ via the projection $\pi : B_r \to \Sigma_r$ fitting into the short exact sequence
	\begin{align}
	0 \xrightarrow{\ \phantom{\pi}\   } P_r \xrightarrow{\ \phantom{\pi}\   }B_r \xrightarrow{\ {\pi}\   }\Sigma_r \xrightarrow{\ \phantom{\pi}\   }0 \ . 
	\end{align}
	If we consider
	the long exact sequence of homotopy groups for the Serre fibration from Proposition~\ref{Lemma: Serre fibration} whose fibers we computed in Lemma~\ref{lemmafibercompactopen} and take 
	\eqref{eqnmapspace1}, \eqref{eqnmapspace2} and \eqref{e2braidgroupeqn}
	into account, we arrive at:

\begin{lemma}\label{lemmahomotopyw2}
The space $W_2^G(r)$ is aspherical, and for $f \in E_2(r)$ and $\varphi \in \Map (\comp(f),BG)$ there is an exact sequence
\begin{align} 0 \to \Aut(\varphi^* EG) \to \pi_1( W_2^G(r),  \langle f,\varphi\rangle  ) \to P_r \to \pi_0( \PBun_G(\comp(f)) ) \to \pi_0(W_2^G(r)) \to 0 \ .  \end{align}
\end{lemma}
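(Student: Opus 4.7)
My plan is to derive both statements from the long exact sequence of homotopy groups associated with the Serre fibration $p : W_2^G(r) \to E_2(r)$ furnished by Proposition~\ref{Lemma: Serre fibration}. The first step is to identify the fiber of $p$ over a point $f \in E_2(r)$ with the mapping space $\Map(\comp(f),BG)$, which is what Lemma~\ref{lemmafibercompactopen} provides. The preparatory observation I would then record is that $\comp(f)$, being a two-disk with $r$ disjoint open sub-disks removed, deformation retracts onto a wedge of $r$ circles; in particular it is aspherical of type $K(F_r,1)$. Combined with the asphericity of $BG$ this forces $\Map(\comp(f),BG)$ to be aspherical as well.

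With both fiber and base aspherical (using $E_2(r) \simeq BP_r$ from~\eqref{e2braidgroupeqn} for the base), the long exact sequence of $p$ at the basepoint $\langle f,\varphi\rangle$ collapses in degrees $\ge 2$, immediately yielding $\pi_k(W_2^G(r),\langle f,\varphi\rangle) = 0$ for $k \ge 2$ and thereby asphericity of $W_2^G(r)$. The surviving low-degree segment of the same long exact sequence is precisely the claimed five-term sequence once one substitutes $\pi_1(\Map(\comp(f),BG),\varphi) \cong \Aut(\varphi^* EG)$ and $\pi_0(\Map(\comp(f),BG)) \cong \pi_0(\PBun_G(\comp(f)))$ from~\eqref{eqnmapspace1}--\eqref{eqnmapspace2}, together with $\pi_1(E_2(r)) \cong P_r$ and the fact that $E_2(r)$ is path-connected so that $\pi_0(E_2(r)) = 0$, which makes the rightmost map surjective.

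I do not anticipate any serious obstacle. The only genuine geometric input is the asphericity of $\comp(f)$, which is elementary, and everything else is bookkeeping in the long exact sequence using ingredients already recorded in the excerpt. A minor but necessary sanity check is that the basepoints for fiber, total space and base line up; this is automatic, since $\langle f,\varphi\rangle$ projects to $f$ and lies in the fiber over $f$ at $\varphi$. If anything delicate is to be watched, it is only that the connecting map $P_r \to \pi_0(\PBun_G(\comp(f)))$ depends on the chosen basepoint $\varphi$ in the fiber, but this is already reflected in the statement.
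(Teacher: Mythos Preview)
Your proposal is correct and follows essentially the same argument as the paper: apply the long exact sequence of the Serre fibration $p: W_2^G(r) \to E_2(r)$ (Proposition~\ref{Lemma: Serre fibration}), identify the fiber as $\Map(\comp(f),BG)$ via Lemma~\ref{lemmafibercompactopen}, and feed in the asphericity of base and fiber together with \eqref{eqnmapspace1}, \eqref{eqnmapspace2}, \eqref{e2braidgroupeqn}. The only cosmetic difference is that the paper obtains asphericity of the fiber directly from \eqref{eqnmapspacebundlegrpd} (i.e.\ $\Map(X,BG)\simeq B\PBun_G(X)$), whereas you argue it from the asphericity of $\comp(f)$ and $BG$; both are fine.
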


We will denote the homotopy fiber of a map $q: X \to Y$ over $y\in Y$ by $q^{-1}[y]$. If $X$ and $Y$ are aspherical, we can make the following elementary observation: \label{homotopyfiberpage}

\begin{lemma}\label{lemmahtpfiberPi}
Let $ q : X \to Y$ be a map between aspherical spaces, then for $y\in Y$ the natural map
		\begin{align}
		\Pi (	q^{-1}[y]) \to      \Pi(q)^{-1}[y] 
		\end{align} 
from the fundamental groupoid of the homotopy fiber $q^{-1}[y]$ to the homotopy fiber $\Pi(q)^{-1}[y]$ of $\Pi(q) : \Pi(X) \to \Pi(Y)$ over $y \in Y$ is an equivalence. 
\end{lemma}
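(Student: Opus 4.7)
The plan is to exploit that, on aspherical spaces, the fundamental groupoid functor $\Pi$ and the classifying space functor $B:\Grpd\to \Top$ are mutually inverse equivalences of homotopy theories, and then transport the defining homotopy fiber square from one side to the other.

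First I would observe that the homotopy fiber $F:=q^{-1}[y]$ is itself aspherical: the long exact sequence of homotopy groups for $F\to X\to Y$ gives, for $k\ge 2$,
\[
\pi_{k+1}(Y)\to \pi_k(F)\to \pi_k(X),
\]
whose outer terms vanish by assumption, so $\pi_k(F)=0$. In particular, $F\simeq B\Pi(F)$. Next I would invoke the standard fact that $B:\Grpd\to \Top$ preserves homotopy pullbacks (for instance via the canonical model structure on $\Grpd$, whose classifying space functor is a right Quillen equivalence onto 1-truncated spaces). Applied to the homotopy fiber square of $\Pi(q):\Pi(X)\to \Pi(Y)$ over $y$ this yields a homotopy fiber sequence of spaces
\[
B\bigl(\Pi(q)^{-1}[y]\bigr)\to B\Pi(X)\to B\Pi(Y).
\]
Since $X$ and $Y$ are aspherical, the counits $B\Pi(X)\to X$ and $B\Pi(Y)\to Y$ are weak equivalences compatible with $q$ and $\Pi(q)$, so the displayed sequence is equivalent to $F\to X\to Y$. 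Hence $B(\Pi(q)^{-1}[y])\simeq F\simeq B\Pi(F)$, and applying $\Pi$ produces an equivalence of groupoids $\Pi(F)\simeq \Pi(q)^{-1}[y]$.

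It remains to identify this abstract equivalence with the natural map of the statement. This is straightforward bookkeeping: on objects, the natural map sends a point $(x,\gamma)\in F$ (a point $x\in X$ together with a path $\gamma$ from $q(x)$ to $y$) to the object $(x,[\gamma])$ of $\Pi(q)^{-1}[y]$, and on morphisms it sends the homotopy class of a path in $F$ to the induced morphism in the comma-style homotopy fiber. This is manifestly the comparison morphism produced by the chain of identifications above.

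The main obstacle is thus the appeal to the fact that $B$ preserves homotopy pullbacks. If one prefers to avoid any model-categorical input, an equally workable route is to check the equivalence directly, by comparing the five-term exact sequence describing $\pi_0$ and automorphism groups of $\Pi(q)^{-1}[y]$ with the bottom of the long exact sequence of homotopy groups for the Serre fibration $F\to X\to Y$, which by the asphericity argument of the first step contains the same information.
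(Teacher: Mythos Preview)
Your argument is correct, but it takes a more abstract route than the paper. The paper first replaces $q$ by a Serre fibration and works with strict fibers, then verifies the equivalence by hand: surjectivity on $\pi_0$ is immediate, injectivity on $\pi_0$ is obtained by lifting a null-homotopy of $q(g)$ along the fibration to produce a path in the fiber, and the isomorphism on $\pi_1$ is read off from comparing the long exact sequence of the fibration with the exact sequence for the categorical fiber. Your proof instead packages all of this into the single statement that $\Pi$ and $B$ set up an equivalence between aspherical spaces and groupoids which respects homotopy fibers; the asphericity of the homotopy fiber (your first step) and the comparison of exact sequences (your closing remark) are exactly the ingredients the paper uses explicitly. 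What your approach buys is conceptual clarity and brevity; what the paper's approach buys is that it avoids invoking any model-categorical machinery and makes the identification with the \emph{given} natural map automatic rather than something to be checked after the fact. One small quibble: the natural comparison map between $X$ and $B\Pi(X)$ usually goes in the direction $X\to B\Pi(X)$ (as a unit, or as the map to the $1$-truncation), not the other way; this does not affect your argument since you only need a natural weak equivalence, but calling it a counit is slightly off.
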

	
	\begin{proof}
		Since $\Pi$ sends Serre fibrations to categorical fibrations, it suffices to prove the statement for a Serre fibration and the actual fibers instead of homotopy fibers.
		
		The spaces involved are aspherical, hence we only need to prove that the map
		\begin{align}
		\pi_0 (q^{-1}(y)) \to \pi_0    ( \Pi(q)^{-1}(y) )        \label{surjectiveonpi0}
		\end{align} is bijective and that the map
		\begin{align}
		\pi_1(q^{-1}(y),x) \to \pi_1(\Pi(q)^{-1}(y) , x) \label{eqnisoonpi1}
		\end{align} is a group isomorphism for all $x\in X$ such that $q(x)=y$. 
		
		Surjectivity of \eqref{surjectiveonpi0} follows from the definitions.
			To see injectivity of \eqref{surjectiveonpi0}, let $x$ and $x'$ be points such that $q(x)=y=q(x')$ and $x\cong x'$ in $\Pi(q)^{-1}(y)$ via a morphism $g : x \to x'$. Then $g$ can be represented as a path in $X$ from $x$ to $x'$ such that $q(g)$ is homotopic relative boundary to the constant path at $y$. This homotopy has a $q$-lift starting at $g$. The endpoint is a path from $x$ to $x'$ in $q^{-1}(y)$ proving that $x$ and $x'$ lie in the same component of $q^{-1}(f)$.
			   
		The fact that \eqref{eqnisoonpi1} is an isomorphism follows by comparing the exact sequences that both  $q^{-1}(y)$ and $ \Pi(q)^{-1}(y) $ give rise to.
	\end{proof}
	
The following proposition will be the key for understanding the auxiliary spaces $W_2^G(r)$. It provides a link to a certain flavor of Hurwitz spaces, see also Remark~\ref{remhurwitz} below. 
	
\begin{proposition}    \label{propohomotopyw3}
There is an equivalence
\begin{align} \label{thehocolimeqn} W_2^G(r) \simeq \underset{f \in \Pi E_2(r)}{\hocolim}\, \Map( \comp(f),BG  ) \ .     \end{align}
\end{proposition}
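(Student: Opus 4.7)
The plan is to present $W_2^G(r)$ as the Grothendieck/Borel construction associated to the monodromy of the Serre fibration $p : W_2^G(r) \to E_2(r)$ supplied by Proposition~\ref{Lemma: Serre fibration}. By construction, the fiber of $p$ over $f \in E_2(r)$ is precisely $\Map(\comp(f), BG)$, and path lifting endows this assignment with the structure of a functor $F : \Pi E_2(r) \to \Top$, well-defined up to natural weak equivalence (rectification to a strict functor is standard and does not affect the homotopy colimit).

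Computing the homotopy colimit via the bar construction yields a space $\hocolim_{\Pi E_2(r)} F$ together with a natural quasi-fibration down to $B(\Pi E_2(r))$ whose fiber over $f$ is $F(f)$. Since $E_2(r) \simeq BP_r$ is aspherical by \eqref{e2braidgroupeqn}, the canonical map $B(\Pi E_2(r)) \to E_2(r)$ is a weak equivalence, and we thereby obtain two fibrations over $E_2(r)$ with identical fibers. I would then construct an explicit comparison map $\hocolim_{\Pi E_2(r)} F \to W_2^G(r)$ over $E_2(r)$, for example by sending a simplex $(f_0 \to \cdots \to f_k, \xi)$ of the bar construction to a path in $W_2^G(r)$ obtained from a chosen $p$-lift of the composite path in $E_2(r)$ starting at $(f_0, \xi)$, and verify that it is a weak equivalence on fibers — whence a weak equivalence over $E_2(r)$.

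The main obstacle is verifying that path lifting along $p$ reproduces exactly the monodromy action of the pure braid group $P_r$ on $\Map(\comp(f), BG)$ (equivalently, on $\PBun_G(\comp(f))$ by \eqref{eqnmapspacebundlegrpd}) that implicitly defines the Hurwitz space on the right. Once this is in place, asphericity closes the argument cleanly: both $W_2^G(r)$ (by Lemma~\ref{lemmahomotopyw2}) and the homotopy colimit are aspherical, so a fibrewise isomorphism on $\pi_0$ and $\pi_1$ suffices, and both sides fit into the same five-term exact sequence involving $P_r$, $\Aut(\varphi^* EG)$, and $\pi_0(\PBun_G(\comp(f)))$ from Lemma~\ref{lemmahomotopyw2}.
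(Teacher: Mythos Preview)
Your approach is correct in outline and reaches the same conclusion, but it takes a genuinely different route from the paper. The paper does not stay in $\Top$: it immediately applies the fundamental groupoid functor to the Serre fibration $p$, observes that $\Pi(p):\Pi W_2^G(r)\to\Pi E_2(r)$ is a category fibered in groupoids, and then invokes Hollander's correspondence to extract a pseudo-functor $X:\Pi E_2(r)\to\Grpd$ whose values are identified with $\Pi\Map(\comp(f),BG)$ via Lemma~\ref{lemmahtpfiberPi} and Lemma~\ref{lemmafibercompactopen}. The equivalence $\int X\simeq \Pi W_2^G(r)$ is then the counit of the Grothendieck/fibered-category correspondence, and Thomason's theorem converts $B\!\int X$ into the desired homotopy colimit. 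Asphericity of both sides (Lemma~\ref{lemmahomotopyw2} and \eqref{eqnmapspacebundlegrpd}) lets the argument pass freely between spaces and nerves of groupoids.

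What each buys: the paper's categorical route sidesteps exactly the point where your sketch is loosest, namely the construction of the comparison map. Your proposal to send a bar simplex $(f_0\to\cdots\to f_k,\xi)$ to ``a path in $W_2^G(r)$ obtained from a chosen $p$-lift'' is not quite a map of spaces (a $k$-simplex must land on a $k$-simplex, not a path), and making it precise requires either a rectification of the monodromy pseudo-functor $\Pi E_2(r)\to\operatorname{Ho}(\Top)$ to a strict diagram plus a zigzag comparing $\hocolim$ of the rectification to the total space, or an appeal to a general straightening/unstraightening theorem for fibrations over aspherical bases. Both are available, but neither is as short as the paper's argument, which exploits that everything in sight is aspherical to work entirely with groupoids, where the Grothendieck construction gives the comparison map for free and Thomason's theorem is the off-the-shelf identification with the homotopy colimit. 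Your five-term exact sequence verification at the end is unnecessary once a fiberwise equivalence over $E_2(r)$ is in hand; conversely, the paper never needs to match monodromy actions explicitly because the fibered-category formalism packages this automatically.
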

 Here by an equivalence we mean that there exists a zigzag of equivalences, i.e.\ the objects are isomorphic in the homotopy category. 
	
	\begin{proof}
	 Since $p : W_2^G(r) \to E_2(r)$ is a Serre fibration (Lemma~\ref{Lemma: Serre fibration}), $\Pi (p) : \Pi W_2^G(r) \to \Pi E_2(r)$ is a categorical fibration. From this we easily deduce that it is also a category fibered in groupoids in the sense of \cite{delignemumford}.
	Corresponding to this category fibered in groupoids
	 we have by \cite[Section~3.3]{hollander} 
	a (pseudo-)functor $X: \left(  \Pi E_2(r)\right)^\opp \to \Grpd$ 
	(we can also see this as a $\Pi E_2(r)$-shaped diagram since a groupoid is equivalent to its opposite)
	such that for $f\in E_2(r)$ the groupoid $X(f)$ is equivalent to the fiber of $\Pi W_2^G(r) \to \Pi E_2(r)$ over $f$. This fiber is equivalent to $\Pi p^{-1}(f)$ by Lemma~\ref{lemmahtpfiberPi} and, finally, to $ \Pi \Map( \comp(f),BG  )$ by Lemma~\ref{lemmafibercompactopen}. If we denote by $\int$ the Grothendieck construction, we conclude from \cite[Theorem~3.12]{hollander} that there is a canonical fiberwise equivalence
			\begin{align}\int X \to \Pi W_2^G(r)  \label{eqncounitgrothendieck}\end{align} of groupoids over $\Pi E_2(r)$. It is then straightforward to verify that this is also an equivalence of groupoids.
		
		By Thomason's Theorem \cite[Theorem~1.2]{thomason}  we obtain a canonical equivalence
		\begin{align}
		\underset{f \in \Pi E_2(r)}{\hocolim}\, BX(f) \xrightarrow{\    \simeq  \    }       B  \int X \ . 
		\end{align}	
		Combining this with the equivalence
		\eqref{eqncounitgrothendieck}
		yields the assertion if we additionally take into account that  $\Map(\comp(f),BG)$ and $W_2^G(r)$
		are aspherical by  \eqref{eqnmapspacebundlegrpd} and Lemma~\ref{lemmahomotopyw2}, respectively.
	\end{proof}

Since $\comp(f)$ is equivalent to a wedge $\bigvee_{j=1}^r \mathbb{S}^1$ of $r$ circles, we conclude from \eqref{eqnmapspacebundlegrpd} and \eqref{holonomyclassification}
	\begin{align} \Map( \comp(f),BG  ) \simeq B \left(  \Hom (\mathbb{Z}^{*r} , G) // G \right) \simeq B (G^{\times r} // G) \ ,  \label{eqnidentifikationbungrpd}   \end{align}
	where $\mathbb{Z}^{*r}$ is the free group on $r$ generators.
	
\begin{lemma}\label{lemdiagramforw2}
Under the identifications \eqref{eqndescriptionE2}  and \eqref{eqnidentifikationbungrpd}, the diagram from $\Pi E_2(r)$ to spaces underlying the homotopy colimit \eqref{thehocolimeqn} is point-wise the nerve of the diagram
		\begin{align}
		\Sigma_r // B_r \to \Grpd 
		\end{align} 
sending $\sigma \in \Sigma_r$ to $G^{\times r} // G$. 
The generator $c_{j,j+1}\in B_r$ braiding strand $j$ and $j+1$ acts as the automorphism
		\begin{align}
		G^{\times r} // G \to G^{\times r} // G, \quad (g_1,\dots,g_j,g_{j+1},\dots,g_r) \mapsto (g_1,\dots,g_jg_{j+1}g^{-1}_j,g_{j},\dots,g_r) \ .    \label{eqnactionofbraidgroup}
		\end{align}
		\end{lemma}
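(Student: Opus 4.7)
The plan is to unpack the diagram produced in the proof of Proposition~\ref{propohomotopyw3}. There the diagram arises, via Hollander's straightening, from the category fibered in groupoids $\Pi(p):\Pi W_2^G(r)\to\Pi E_2(r)$, so its groupoid value at $f\in \Pi E_2(r)$ is equivalent to $\Pi p^{-1}(f)\simeq \Pi\Map(\comp(f),BG)\simeq G^{\times r}//G$ by Lemma~\ref{lemmahtpfiberPi}, \eqref{eqnmapspacebundlegrpd2} and the holonomy classification \eqref{holonomyclassification}; taking nerves yields the underlying topological diagram. Hence what remains is to pin down, under the identifications \eqref{eqndescriptionE2} and \eqref{eqnidentifikationbungrpd}, (i) a natural identification of the value at each $\sigma\in\Sigma_r$ with $G^{\times r}//G$, and (ii) the automorphism induced by the Artin generator $c_{j,j+1}\in B_r$.

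For (i), choose for each $\sigma\in\Sigma_r$ a representative $f_\sigma\in E_2(r)$ whose disks are arranged along the horizontal diameter of $\disk^2$ in the order prescribed by $\sigma$, together with a basepoint $x_0\in\partial\disk^2$ and reference loops $\gamma^\sigma_1,\dots,\gamma^\sigma_r$ that encircle these disks once counter-clockwise in that order; these freely generate $\pi_1(\comp(f_\sigma),x_0)$. Sending a bundle to its tuple of holonomies along $\gamma^\sigma_1,\dots,\gamma^\sigma_r$ realises the equivalence $\Pi\Map(\comp(f_\sigma),BG)\simeq \Hom(\mathbb{Z}^{*r},G)//G = G^{\times r}//G$ compatibly with \eqref{eqnidentifikationbungrpd}.

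For (ii), the morphism produced by $c_{j,j+1}$ is computed by path-lifting (using the Serre fibration property from Proposition~\ref{Lemma: Serre fibration}) along a concrete homotopy in $E_2(r)$ that performs a half-twist of the $j$-th and $(j+1)$-th disks. Under such a half-twist the loop $\gamma^\sigma_k$ remains isotopic to itself for $k\neq j,j+1$, whereas $\gamma^\sigma_{j+1}$ drags to a loop isotopic to $\gamma^\sigma_j$ encircling the new $(j+1)$-th disk and $\gamma^\sigma_j$ drags to a loop isotopic to $\gamma^\sigma_j\gamma^\sigma_{j+1}(\gamma^\sigma_j)^{-1}$ encircling the new $j$-th disk. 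This is the classical Artin/Hurwitz picture of the $B_r$-action on the fundamental group of a punctured disk, and reading off the corresponding holonomies yields precisely \eqref{eqnactionofbraidgroup}. The main obstacle is the bookkeeping of conventions (direction of the half-twist, choice of basepoint, orientation of the reference loops); once these are fixed, consistency with the remaining Artin relations and with the pure-braid subgroup follows automatically because the whole assignment originates from the honest groupoid $\Pi W_2^G(r)$.
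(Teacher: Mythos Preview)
Your proposal is correct and follows essentially the same route as the paper: both reduce the lemma to the Hurwitz formula for how the Artin generator $c_{j,j+1}$ transforms holonomies of $G$-bundles under a half-twist. The paper simply cites \cite[Lemma~3.25]{maiernikolausschweigerteq} for this computation, whereas you spell out the standard argument (choose reference loops generating $\pi_1(\comp(f_\sigma),x_0)$ and track them under the half-twist); your version is a bit more self-contained but otherwise identical in content.
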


		\begin{proof}
		We only have to observe that the transformation of holonomies under the braid group action is given by the formula \eqref{eqnactionofbraidgroup} (sometimes called \emph{Hurwitz formula}). For a detailed proof of this fact (given without loss of generality for two embedded disks) we refer to e.g.\ \cite[Lemma~3.25]{maiernikolausschweigerteq}.
		\end{proof}

By Proposition~\ref{propohomotopyw3}, $W_2^G(r)$ is the homotopy colimit of the nerve of the diagram presented in Lemma~\ref{lemdiagramforw2}. 
For later purposes, we need to describe $W_2^G(r)$ explicitly as a groupoid. 
If we combine Lemma~\ref{lemdiagramforw2} with the homotopy colimit formula provided in Lemma~\ref{lemhocolimexp} in the Appendix, we obtain:

\begin{lemma}\label{corpresentationw2g}
The groupoid $\Pi W_2^G(r)$ is equivalent to the groupoid with objects 
$\Sigma_r\times  G^{\times r}$ and pairs $(c,h)\in B_r \times G$ as morphisms
$(\sigma,g_1,\dots,g_r ) \to (\pi(c)\sigma , c. (hg_1h^{-1} , \dots, hg_rh^{-1}))$, where the action of the braid group on tuples of group elements is given by \eqref{eqnactionofbraidgroup}.
\end{lemma}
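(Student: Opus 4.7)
The plan is to combine the three main preceding results in the following chain: Proposition~\ref{propohomotopyw3} writes $W_2^G(r)$ as a homotopy colimit, Lemma~\ref{lemdiagramforw2} identifies the diagram being taken, and the combinatorial homotopy colimit formula from Lemma~\ref{lemhocolimexp} then lets us read off $\Pi W_2^G(r)$ as (the nerve of) a Grothendieck construction. The final step is to unwind this Grothendieck construction into the explicit presentation in the statement.

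More concretely, by Proposition~\ref{propohomotopyw3} together with \eqref{eqnidentifikationbungrpd} and Lemma~\ref{lemdiagramforw2}, the space $W_2^G(r)$ is equivalent to the homotopy colimit of $B X$, where $X : \Sigma_r // B_r \to \Grpd$ is the functor sending every object $\sigma \in \Sigma_r$ to $G^{\times r} // G$ and sending a braid generator $c_{j,j+1}$ to the Hurwitz automorphism \eqref{eqnactionofbraidgroup}. Applying the fundamental groupoid functor $\Pi$ — and using exactly the Grothendieck/Thomason argument already carried out in the proof of Proposition~\ref{propohomotopyw3} — we obtain $\Pi W_2^G(r) \simeq \int X$, the Grothendieck construction of $X$. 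Equivalently, this is the output of the explicit point-set formula from Lemma~\ref{lemhocolimexp}.

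It remains only to spell out $\int X$. Its objects are pairs $(\sigma, \underline{g}) \in \Sigma_r \times G^{\times r}$, with $\sigma$ an object of the base and $\underline{g}$ an object of the fiber $X(\sigma) = G^{\times r} // G$. A morphism $(\sigma, \underline{g}) \to (\sigma', \underline{g}')$ consists of a morphism $c : \sigma \to \sigma'$ in $\Sigma_r // B_r$, i.e.\ an element $c \in B_r$ with $\sigma' = \pi(c)\sigma$, together with a morphism $X(c)(\underline{g}) = c.\underline{g} \to \underline{g}'$ in $G^{\times r} // G$, i.e.\ an $h \in G$ with $\underline{g}' = h\,(c.\underline{g})\,h^{-1}$. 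The only point needing a short check is that simultaneous conjugation by $h$ commutes with the braid group action, so that $h\,(c.\underline{g})\,h^{-1} = c.(h \underline{g} h^{-1})$; this is immediate from the Hurwitz formula \eqref{eqnactionofbraidgroup} applied to the generators $c_{j,j+1}$ and then extended to all of $B_r$. With this, the target of $(c,h)$ takes precisely the form $(\pi(c)\sigma, c.(hg_1 h^{-1}, \dots, hg_r h^{-1}))$, matching the statement.

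The substantive work has already been done in Proposition~\ref{propohomotopyw3} and Lemma~\ref{lemdiagramforw2}; the present lemma is essentially a translation exercise, and the only genuine calculation is the commutation of the braid action with diagonal conjugation, which is a one-line check on generators.
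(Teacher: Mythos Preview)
Your proposal is correct and follows essentially the same route as the paper: the paper simply states that the lemma is obtained by combining Proposition~\ref{propohomotopyw3}, Lemma~\ref{lemdiagramforw2}, and the explicit homotopy colimit formula of Lemma~\ref{lemhocolimexp}, and you spell this out carefully. One minor remark: the commutation check between diagonal conjugation and the Hurwitz action is not strictly needed if you apply Lemma~\ref{lemhocolimexp} verbatim, since in that formulation the fiber morphism $h$ acts \emph{before} the base morphism $c$, giving directly the target $(\pi(c)\sigma,\,c.(h\underline{g}h^{-1}))$ as in the statement; your version instead uses the other standard convention for the Grothendieck construction and then invokes the commutation to reconcile them, which is of course equivalent.
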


\begin{remark}\label{remhurwitz}
The homotopy quotient of the space of $G$-bundles over a punctured plane by the braid group action (or its description in terms of holonomies) first appeared in \cite{clebsch,hurwitz} and is called a \emph{Hurwitz space}, see \cite{evw} for an overview. 
\end{remark}

\subsection{Groupoid description of $E_2^G$}
Our investigation of $W_2^G$ is the key to the computation of the homotopy groups of the little bundles operad $E_2^G$. 
Using the long exact sequence for the Serre fibration $q : W_2^G(r) \to \prod^{r+1} \Map (\mathbb{S}^{1},BG)$ 
from Proposition~\ref{lemmaqisfibration}
combined with 
Lemma~\ref{lemmahomotopyw2}
we obtain:  
	\begin{proposition}\label{propolittlebundlesaspherical}
	For any group $G$, the operad $E_2^G$ is aspherical.
\end{proposition}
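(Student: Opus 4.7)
The plan is to show that every component $E_2^G\binom{\psi}{\underline{\varphi}}$ of the operad (ranging over all arities $r\ge 0$ and all color tuples $(\underline{\varphi},\psi)$) is aspherical. Since by \eqref{defEnTeqn} this component is (weakly equivalent to) the homotopy fiber of the Serre fibration $q:W_2^G(r)\to\prod^{r+1}\Map(\mathbb{S}^1,BG)$ over the point $(\underline{\varphi},\psi)$, I will read off the vanishing of $\pi_k$ for $k\ge 2$ directly from the associated long exact sequence of homotopy groups.

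First I would collect the two asphericity inputs. The total space $W_2^G(r)$ is aspherical by Lemma~\ref{lemmahomotopyw2}. For the base, the identification $\Map(\mathbb{S}^1,BG)\simeq B\PBun_G(\mathbb{S}^1)$ from \eqref{eqnmapspacebundlegrpd} shows $\Map(\mathbb{S}^1,BG)$ is a classifying space of a groupoid, hence aspherical; the finite product $\prod^{r+1}\Map(\mathbb{S}^1,BG)$ is therefore aspherical as well, since homotopy groups commute with finite products.

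Next I would feed these facts into the long exact sequence
\begin{align}
\cdots \to \pi_{k+1}\!\Bigl(\prod^{r+1}\Map(\mathbb{S}^1,BG)\Bigr)\to \pi_k\!\left(E_2^G\tbinom{\psi}{\underline{\varphi}}\right)\to \pi_k(W_2^G(r))\to \pi_k\!\Bigl(\prod^{r+1}\Map(\mathbb{S}^1,BG)\Bigr)\to\cdots
\end{align}
associated to the Serre fibration $q$ (Proposition~\ref{lemmaqisfibration}). For every $k\ge 2$ the two outer terms and the right-hand term vanish by the asphericity observations above, so that $\pi_k\!\left(E_2^G\tbinom{\psi}{\underline{\varphi}}\right)=0$. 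This is the required vanishing for all $k\ge 2$ and all choices of basepoint, so each $E_2^G\binom{\psi}{\underline{\varphi}}$ is aspherical, which is precisely the definition of asphericity of the operad $E_2^G$.

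There is no serious obstacle here: the argument is a one-step diagram chase in the long exact sequence, and everything nontrivial has been done in Lemma~\ref{lemmahomotopyw2} and Proposition~\ref{lemmaqisfibration}. The only small subtlety worth flagging explicitly is that the long exact sequence refers to homotopy fibers in the strict sense, but since $q$ is a Serre fibration the strict and homotopy fibers agree, and the homotopy pullback diagram \eqref{defEnTeqn} identifies the latter with $E_2^G\binom{\psi}{\underline{\varphi}}$.
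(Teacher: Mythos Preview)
Your proof is correct and follows exactly the approach sketched in the paper: the long exact sequence of the Serre fibration $q$ from Proposition~\ref{lemmaqisfibration}, together with the asphericity of $W_2^G(r)$ from Lemma~\ref{lemmahomotopyw2} and of the base $\prod^{r+1}\Map(\mathbb{S}^1,BG)$ from \eqref{eqnmapspacebundlegrpd}. You have simply spelled out the details that the paper leaves implicit.
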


Therefore, it suffices to compute the groupoid-valued operad $\Pi E_2^G$.
To this end, recall from \eqref{eqnmapspacebundlegrpd2} that the groupoid $\Pi \Map(\mathbb{S}^1,BG)$ is canonically equivalent to the groupoid of $G$-bundles over $\mathbb{S}^1$, hence for any fixed choice of basepoint we obtain 
an equivalence
\begin{align}
\Pi \Map(\mathbb{S}^1,BG) \xrightarrow{\   \simeq \ } G//G \ .  \label{equivtoactiongroupoid}
\end{align}
In the sequel, we choose a weak inverse
\begin{align}
\widehat{-} : G//G \xrightarrow{\   \simeq \ } \Pi \Map(\mathbb{S}^1,BG) \ . \label{eqnweakinverse}
\end{align}
We make our choices such that the unit element
$e$ of the group $G$ is mapped to the constant loop at the base point, and such that all loops in the image map $(0,1)\in \mathbb{S}^1\subset \mathbb{R}^2$ 
to the base point of $BG$. 
The object function
$\widehat{-}:  G \to  \Map(\mathbb{S}^1,BG)$ can be used to pull back $E_2^G$ to a $G$-colored operad whose components are given as follows:

\begin{proposition}\label{propoE2Gactiongroupoid}
For $\underline{g}\in G^r$ and $h\in G$	
the groupoid $\Pi E_2^G \binom{\widehat{h}}{ \widehat{\underline{g}}}$ is equivalent to the action groupoid of the $B_r$-action specified in Lemma~\ref{corpresentationw2g} on the subset 
	\begin{align}
\Sigma_r \times_h G^r := \left\{  	(\sigma,\underline{b})\in \Sigma_r \times G^r \, \left| \,	\prod_{j=1}^r b_{\sigma(j)}       g_j b_{\sigma(j)}^{-1}  = h\right. \right\} \subset \Sigma_r \times G^r \ .
	\end{align}
	\end{proposition}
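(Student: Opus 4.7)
The plan is to identify $\Pi E_2^G\binom{\widehat h}{\widehat{\underline g}}$ with a homotopy fiber of a functor of groupoids and then simplify using the description of $\Pi W_2^G(r)$ from Lemma~\ref{corpresentationw2g}.

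First I would invoke Lemma~\ref{lemmahtpfiberPi}. By Proposition~\ref{lemmaqisfibration} the map $q$ is a Serre fibration, so the pullback \eqref{defEnTeqn} defining $E_2^G\binom{\widehat h}{\widehat{\underline g}}$ is a model for the homotopy fiber of $q$ over $(\widehat{\underline g},\widehat h)$. Since $E_2^G\binom{\widehat h}{\widehat{\underline g}}$ is aspherical by Proposition~\ref{propolittlebundlesaspherical}, $W_2^G(r)$ is aspherical by Lemma~\ref{lemmahomotopyw2}, and $\prod^{r+1}\Map(\mathbb{S}^1,BG)$ is aspherical, Lemma~\ref{lemmahtpfiberPi} gives an equivalence of groupoids between $\Pi E_2^G\binom{\widehat h}{\widehat{\underline g}}$ and the homotopy fiber of the categorical fibration $\Pi(q)$ over $(\widehat{\underline g},\widehat h)$.

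Next, I would replace the target using the equivalence $\widehat{-}$ from \eqref{eqnweakinverse} applied pointwise at each of the $r+1$ boundary spheres, reducing the problem to computing the homotopy fiber over $(\underline g,h)$ of an induced functor $F:\Pi W_2^G(r)\to (G//G)^{r+1}$. By Lemma~\ref{corpresentationw2g} the source is the action groupoid $(\Sigma_r\times G^r)//(B_r\times G)$, and the identification of $\comp(f)$ as a wedge of $r$ circles (together with the description of the outer boundary loop in its fundamental group used in the proof of Lemma~\ref{lemdiagramforw2}) shows that on objects $F$ sends $(\sigma,\underline b)$ to the tuple of ingoing holonomies $b_j$ together with the outgoing holonomy $\prod_{j=1}^r b_{\sigma(j)}$.

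I would then compute the homotopy fiber of $F$ over $(\underline g,h)$ explicitly as a comma-type construction: its objects are quadruples $(\sigma,\underline b,\underline k,k_0)$ with $\underline k\in G^r$, $k_0\in G$ satisfying $k_jb_jk_j^{-1}=g_j$ for $1\le j\le r$ and $k_0\bigl(\prod_jb_{\sigma(j)}\bigr)k_0^{-1}=h$, and morphisms are those pairs $(c,h')\in B_r\times G$ of Lemma~\ref{corpresentationw2g} compatible with the prescribed boundary identifications. The $G$-part of the morphisms acts freely on the choice of $k_0$ via the global gauge action, so passing to the strictly equivalent subgroupoid with $k_0=e$ eliminates the $G$-factor from the morphisms and leaves only the $B_r$-action. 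After the change of variables that absorbs the remaining data $\underline k$ into a single tuple in $G^r$ (renamed to $\underline b$), the outgoing constraint rewrites as $\prod_{j=1}^r b_{\sigma(j)}g_jb_{\sigma(j)}^{-1}=h$, and the resulting groupoid is precisely the action groupoid in the statement.

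The main obstacle I expect is this last bookkeeping step: carefully tracking how the permutation $\sigma$ combines with the Hurwitz product to produce the outgoing holonomy, and how the change of variables between the pair (holonomy, gauge identification) and the single parameter $b_j$ lands the constraint on the nose in the form $\prod_{j=1}^r b_{\sigma(j)}g_jb_{\sigma(j)}^{-1}=h$. Different conventions give a-priori different but equivalent expressions, so matching the stated formula requires verifying compatibility with the specific choice of weak inverse $\widehat{-}$ fixed in \eqref{eqnweakinverse}.
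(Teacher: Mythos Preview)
Your approach is essentially identical to the paper's own proof: both reduce to the homotopy fiber of $\Pi(q)$ via Lemma~\ref{lemmahtpfiberPi}, use the presentation of $\Pi W_2^G(r)$ from Lemma~\ref{corpresentationw2g} to compute it explicitly, restrict to the full subgroupoid with the outgoing gauge parameter equal to the identity (the paper phrases this as concentrating on objects with $b_{r+1}=1$), and then absorb the redundant holonomy data via a change of variables. The bookkeeping obstacle you anticipate is exactly the step the paper handles tersely with the remark that one may work with $\underline{b}^{-1}$ instead of $\underline{b}$.
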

	
	\begin{proof}
	By Lemma~\ref{lemmahtpfiberPi}, $\Pi E_2^G \binom{h}{ \underline{g}}$ is equivalent to the homotopy fiber of \begin{align}\label{maphtpfiberinteqn} \Pi W_2^G(r) \to \Pi \prod^{r+1} \Map (\mathbb{S}^{1},BG)\simeq (G//G)^{r+1}       \end{align} over $(\underline{g},h)$. 
	Using the presentation of 
	$\Pi W_2^G(r)$ given in
	Lemma~\ref{corpresentationw2g}, the functor \eqref{maphtpfiberinteqn} sends $(\sigma,a_1,\dots,a_r)$ to 
	$(a_1,\dots,a_r, a_{\sigma(1)} \dots a_{\sigma(r)})$.
	Therefore, the homotopy fiber of \eqref{maphtpfiberinteqn} over $(\underline{g},h)$ consists of all
	\begin{align} (\sigma,\underline{a}=(a_1,\dots,a_r)) & \in \Sigma_r \times G^r, \quad \underline{b}= (b_1,\dots,b_{r+1})\in G^{r+1} \\  \text{such that}\quad b_j a_j b_j^{-1} &= g_j, \quad 1\le j\le r, \quad b_{r+1} a_{\sigma(1)} \dots a_{\sigma(r)} b_{r+1}^{-1} = h \ .
	\end{align} From Lemma~\ref{corpresentationw2g} it follows that, up to equivalence, we can concentrate on the full subgroupoid of the homotopy fiber spanned by those objects satisfying $b_{r+1}=1$; 
	and a morphism $(\sigma,\underline{a},\underline{b}) \to (\sigma', \underline{a'},\underline{b'})$ in that full subgroupoid, i.e.\ with $b_{r+1}=b_{r+1}'=1$, is just an element of $B_r$.
	Of course, for an object $(\sigma,\underline{a},\underline{b})$ the tuple $\underline{a}$ is redundant because $a_j = b_j^{-1} g_j b_j$. Also, we may work with the tuple $\underline{b}^{-1}=(b_1^{-1},\dots,b_r^{-1})$ instead of $\underline{b}$. 
	\end{proof}

\begin{remark}
	The above statement just gives the components of $\Pi E_2^G$, but does not give a description as an operad. 
	The latter problem will be addressed in Section~\ref{seccatalg}.
	\end{remark}

In the sequel, we will need a lifting result for the functor $	\Pi E_2^G \binom{\psi}{ \underline{\varphi}} \to \Pi E_2(r) $.

\begin{proposition}\label{Prop: Unique lifts}
	For $\underline{\varphi} \in \prod^r \Map (\mathbb{S}^{1},BG)$ and $\psi \in \Map (\mathbb{S}^{1},BG)$ the forgetful functor
	\begin{align}
	\Pi E_2^G \binom{\psi}{ \underline{\varphi}} \to \Pi E_2(r)     \label{e2ge2forgetfulfibeqn}
	\end{align} 
	admits
	lifts of the form
	 \begin{equation}
		\begin{tikzcd}
		0  \ar{rr}{  x_0   } \ar[swap]{dd}{} & & 	\Pi E_2^G \binom{\psi}{ \underline{\varphi}}   \ar{dd}{} \\
		& & \\
	\text{$[1]$}  \ar{rr}{   g  } \ar[dashrightarrow]{rruu}{} & & \Pi E_2(r)
		\end{tikzcd} 
		\end{equation}
		as long as the end and starting point of $g$ 
		are points in $E_2(r)$ 
		whose little disks have non-intersecting boundaries.
		\end{proposition}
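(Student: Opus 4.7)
My plan is to represent the morphism $g:f\to f'$ in $\Pi E_2(r)$ by a path $g_t:[0,1]\to E_2(r)$, and to produce, starting from the given object $x_0=\langle f,\xi_0\rangle$ of $\Pi E_2^G\binom{\psi}{\underline{\varphi}}$, an actual path in the homotopy fiber $E_2^G\binom{\psi}{\underline{\varphi}}\subset W_2^G(r)$ lifting $g_t$. Passing to the fundamental groupoid then yields the desired lift.

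First I would exploit the hypothesis on the endpoints to replace $g_t$ by a homotopic path (rel endpoints) that stays entirely inside the open subspace $\tilde E_2(r)\subset E_2(r)$ consisting of configurations whose closed embedded disks are pairwise disjoint. Both endpoints of $g_t$ lie in $\tilde E_2(r)$ by assumption, and the inclusion $\tilde E_2(r)\hookrightarrow E_2(r)$ is a weak equivalence since both spaces model the classifying space of the pure braid group $P_r$ (compare with \eqref{e2braidgroupeqn}); so any path between two points of $\tilde E_2(r)$ can be homotoped rel endpoints into $\tilde E_2(r)$.

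Second I would use that over $\tilde E_2(r)$ the family $\{\comp(f)\}_f$ assembles into a locally trivial fiber bundle of compact surfaces with boundary, respecting the identification of the $r+1$ boundary spheres of \eqref{eqncopiesofspheres}. Trivializing this bundle along the new path over the contractible base $[0,1]$ produces a continuous family of homeomorphisms $\comp(g_t)\cong \comp(f)$ which are the identity on each boundary circle. Pulling $\xi_0$ back along these homeomorphisms yields a continuous family $\xi_t:\comp(g_t)\to BG$ whose restriction to $\partial\comp(g_t)$ is constantly $(\underline{\varphi},\psi)$. Consequently $q(\langle g_t,\xi_t\rangle)=(\underline{\varphi},\psi)$ for all $t$, the path $t\mapsto\langle g_t,\xi_t\rangle$ lives in $E_2^G\binom{\psi}{\underline{\varphi}}$, and it descends to a morphism in $\Pi E_2^G\binom{\psi}{\underline{\varphi}}$ mapping to $g$ under \eqref{e2ge2forgetfulfibeqn}.

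The main obstacle is the second step: the local triviality of the complement bundle over $\tilde E_2(r)$ with boundary identifications preserved. This is a point-set argument in the spirit of those carried out in Appendix~\ref{appwspaces} to show that $p$ is a Serre fibration, and amounts to constructing, around any $f\in\tilde E_2(r)$, an ambient isotopy of $\disk^n$ rel outer boundary that carries the little disks of $f$ to those of any sufficiently close configuration. The strict disjointness of the closed embedded disks is exactly what guarantees that such isotopies exist and can be chosen to fix $\mathbb{S}^{n-1}$, ensuring that the prescribed boundary data $(\underline{\varphi},\psi)$ is preserved along the transport.
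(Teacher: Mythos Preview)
Your proposal is correct, and both you and the paper begin the same way: push the representative path into the open subspace $\tilde E_2(r)$ of configurations with pairwise disjoint closed disks (the paper phrases this as ``rescaling in the interior''). From there, however, the two arguments diverge.

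The paper lifts the path first to $W_2^G(r)$ using only that $p:W_2^G(r)\to E_2(r)$ is a Serre fibration (Proposition~\ref{Lemma: Serre fibration}); this lift need not stay in the fibre $E_2^G\binom{\psi}{\underline{\varphi}}$, so the boundary restrictions drift along homotopies of $\underline{\varphi}$ and $\psi$. The paper then corrects this by inserting the inverse homotopies on small disjoint collars of the boundary circles, exactly as in the proof of Proposition~\ref{applemmaqisfibration}. Your route instead observes that over $\tilde E_2(r)$ the complements assemble into a locally trivial bundle with trivialisations respecting the boundary parametrisations of~\eqref{eqncopiesofspheres}; trivialising along the path and pulling back $\xi_0$ lands you directly in the fibre, with no correction step needed.

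The trade-off: the paper's argument recycles results already proved in Appendix~\ref{appwspaces} and needs no new point-set input beyond the collar trick. Your argument is more geometric and makes the role of the non-intersection hypothesis transparent (it is precisely what allows boundary-preserving ambient isotopies), but it requires you to actually produce those isotopies --- in particular, to arrange that the ambient isotopy restricts on each inner boundary sphere to the affine reparametrisation $(g_t)_k\circ f_k^{-1}$ rather than an arbitrary homeomorphism, so that the pulled-back map really has boundary values $(\underline{\varphi},\psi)$. This is straightforward (work near each disk with the affine map and damp to the identity via a bump function), but it is the point where your sketch is thinnest and deserves a sentence making it explicit.
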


However in general, \eqref{e2ge2forgetfulfibeqn} is not a fibration as the following counterexample shows:
Consider for $g,h\in G$ the diagram 
\begin{equation}
\begin{tikzcd}
0 \ar{r}{\widehat{h}} \ar[d] & \Pi E_2^G \binom{\widehat{hgh^{-1}}}{ \widehat g} \ar[d] \\ 
{[1] } \ar{r}{L} & \Pi E_2(1) \ ,
\end{tikzcd} 
\end{equation}	
where $L$ is the homotopy sketched in Figure~\ref{Fig:Delta} on page \pageref{Fig:Delta} and
$\widehat{h}$ is seen as a point in $\Pi E_2^G \binom{\widehat{hgh^{-1}}}{ \widehat g} $ by placing the homotopy corresponding to $h$ on the complement of $L_0$.
Clearly, 
for this square, there is no lift  to $ \Pi E_2^G \binom{\widehat{hgh^{-1}}}{ \widehat g}$ whenever $h\neq e$.

\begin{proof}[Proof of Proposition~\ref{Prop: Unique lifts}]
We start by showing that
for $\underline{\varphi} \in \prod^r \Map (\mathbb{S}^{n-1},BG)$ and $\psi \in \Map (\mathbb{S}^{n-1},T)$ the composition $E_n^T \binom{\psi}{ \underline{\varphi}} \to W_n^T(r) \to E_n(r)$ admits lifts for paths $I \longrightarrow E_n(r)$ 	whose little disks have non-intersecting boundaries.
	Indeed, the needed lifts 
	  \begin{equation}
	\begin{tikzcd}
	 0  \ar{rr}{} \ar[swap]{dd}{} & & E_2^G \binom{\psi}{ \underline{\varphi}} \ar{dd}{} \\
	  	& & \\
	   I  \ar{rr}{} \ar[dashrightarrow]{rruu}{ } & & E_2(r)
	\end{tikzcd} 
	\end{equation} can be constructed thanks to Lemma~\ref{Lemma: Serre fibration} if we allow the lift to take values in $W_2^G(r)$ without making sure that we hit the correct fiber. 
	The parameter $t\in I$ will then describe a path in $W_2^G (r)$
	whose restriction to the boundary circles will describe homotopies of $\underline{\varphi}$ and $\psi$. In order to remain in the fiber $ E_n^T \binom{\psi}{ \underline{\varphi}}$,
	 these restrictions would have to be constant. We can easily achieve that by fixing small non-intersecting
	 collars around the boundary circles 
	 (this is possible since we assumed that the boundaries of the disks do not intersect)
	 on which we place the inverses of the homotopies of $\underline{\varphi}$ and $\psi$ mentioned above
	 (this strategy is explained in more detail in the proof of Proposition~\ref{applemmaqisfibration} in the Appendix). 
	 
More generally, a path in $\Pi E_2^G$, for which the start and end point do not contain disks with intersecting boundaries,
admits a representative in $E_2^G$ whose little disks do not touch (by rescaling in the interior). 
\end{proof}

\begin{lemma}\label{lemmacoveringlift}
The forgetful functor $\left( \Sigma_r \times_h G^r  \right)  // B_r \to \Sigma_r // B_r$ admits a unique solution to the lifting problem
	\begin{equation}
	\begin{tikzcd}
	0  \ar{rr}{(\sigma, \underline{b})} \ar[swap]{dd}{} & &  \left( \Sigma_r \times_h G^r  \right)  // B_r   \ar{dd}{} \\
	& & \\
	\text{$[1]$}  \ar{rr}{c : \sigma \to \pi(c) \sigma } \ar[dashrightarrow]{rruu}{\exists!} & & \Sigma_r // B_r \ . 
	\end{tikzcd} 
	\end{equation}
\end{lemma}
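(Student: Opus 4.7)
The plan is to reduce the lifting problem to an elementary feature of action groupoids: in $X // H$ for any $H$-set $X$, a morphism out of a fixed object $x$ is simply an element $c \in H$ with target $c.x$, so $\Hom_{X//H}(x,-)$ is canonically labeled by the elements of $H$. Moreover, for any $H$-equivariant map of $H$-sets $X \to Y$, the induced functor $X//H \to Y//H$ acts as the identity on these $H$-labels of morphisms.

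In the situation of the lemma, the inclusion $\Sigma_r \times_h G^r \hookrightarrow \Sigma_r \times G^r$ of the constraint subset is $B_r$-equivariant—the $B_r$-stability is already implicit in the notation $(\Sigma_r \times_h G^r) // B_r$ appearing in the statement, and can be verified on generators of $B_r$ from the Hurwitz conjugation formula \eqref{eqnactionofbraidgroup}. The forgetful functor of the lemma then factors as the composite $(\Sigma_r \times_h G^r) // B_r \to (\Sigma_r \times G^r) // B_r \to \Sigma_r // B_r$, and by the observation above both arrows are identity on $B_r$-labels of morphisms. Consequently, given the source $(\sigma, \underline{b})$ and the downstairs morphism $c : \sigma \to \pi(c)\sigma$, the unique lift is the morphism of $(\Sigma_r \times_h G^r) // B_r$ carrying the same label $c$; its target is $(\pi(c)\sigma, c.\underline{b})$, where $c.\underline{b}$ is computed via the braid action described in Lemma~\ref{corpresentationw2g}.

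No serious obstacle is anticipated, since the content is essentially that the forgetful functor is discretely fibered on Hom-sets, which is a generic feature of any functor of action groupoids induced by an equivariant inclusion of underlying sets. The only mildly technical step is the verification of $B_r$-stability of $\Sigma_r \times_h G^r$, and this reduces to a direct computation with the generators $c_{j,j+1}$ acting by the Hurwitz formula, checking that the defining product $\prod_j b_{\sigma(j)} g_j b_{\sigma(j)}^{-1} = h$ is preserved.
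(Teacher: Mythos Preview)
Your proposal is correct and takes essentially the same approach as the paper, which simply records the unique lift as $c : (\sigma,\underline{b}) \to c.(\sigma,\underline{b})$ without further comment. Your version is more verbose in that it explains the underlying reason (morphisms in an action groupoid are labeled by group elements and equivariant maps preserve these labels), but the content is identical.
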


\begin{proof}The unique lift is $c: (\sigma, \underline{b}) \to c.(\sigma,\underline{b})$.\end{proof}

\begin{remark}[Uniqueness of the lifts in Proposition~\ref{Prop: Unique lifts}]\label{remProp: Unique lifts}
For two lifts $\widetilde g : x_0 \to x_1$ and $\widetilde g' : x_0 \to x_1'$ of $g: [1] \to \Pi E_2(r)$ under 
\eqref{e2ge2forgetfulfibeqn},
there is a unique morphism $h = \widetilde g' \widetilde g^{-1} : x_1 \to x_1'$ such that $h\widetilde g = \widetilde g'$ and $h=\id_{x_1}$ whenever $x_1=x_1'$, i.e.\ the lift is completely determined by its start and end point.	 
For the proof of this uniqueness statement, we consider the commutative diagram
\begin{equation}
 \begin{tikzcd}
 0  \ar{rr}{  x_0   } \ar[swap]{dd}{} & & 	\Pi E_2^G \binom{\psi}{ \underline{\varphi}}   \ar{dd}{} \ar{rr}{\simeq} && \left( \Sigma_r \times_h G^r  \right)  // B_r  \ar{dd}{}  \\
 & & \\
 \text{$[1]$}  \ar{rr}{   g  }  & & \Pi E_2(r) \ar{rr}{\simeq} && \Sigma_r // B_r\ ,
 \end{tikzcd} 
 \end{equation}
where $r= |\underline{\varphi}|$. Moreover, we have used a holonomy description of $(\underline{\varphi},\psi)$ to apply Proposition~\ref{propoE2Gactiongroupoid}.
	 We deduce from Lemma~\ref{lemmacoveringlift} that the images of two lifts $\widetilde g : x_0 \to x_1$ and $\widetilde g' : x_0 \to x_1'$ under $	\Pi E_2^G \binom{\psi}{ \underline{\varphi}} \to \left( \Sigma_r \times_h G^r  \right)  // B_r $ agree. 
This implies $\Phi(h)=\id_{\Phi(x')}$ and hence $h=\id_{x_1}$ whenever $x_1=x_1'$. 

Furthermore, the set of allowed endpoints is the preimage of the endpoint of the lift from Lemma \ref{lemmacoveringlift} under the equivalence $\Pi E_2^G \binom{\psi}{ \underline{\varphi}} \to \left( \Sigma_r \times_h G^r  \right)  // B_r $ intersected with the preimage of the endpoint of $g$ under the projection $\Pi E_2^G \binom{\psi}{ \underline{\varphi}}\to \Pi E_2(r)$.  
\end{remark}

\section{Categorical algebras over the little bundle operad\label{seccatalg}}
Since any aspherical operad can be seen as an operad in groupoids, it is natural to consider its categorical algebras. For the little disks operad this leads to braided monoidal categories, see \cite[Chapter 5 and 6]{FresseI} for a detailed discussion that will also be briefly summarized below. For the little bundles operad, as we prove in this section, this leads to braided $G$-crossed categories.
This type of category, which is of great importance in equivariant representation theory \cite{centerofgradedfusioncategories,enom} and topological field theory \cite{htv,hrt,extofk}, is based on work by Turaev  \cite{turaevgcrossed,turaevhqft}. Various flavors of the notion exist \cite{maiernikolausschweigerteq,Coherence} differing, for example, by the type of coherence data considered. 
 We will give the precise version relevant to the present article below.

\subsection{Groupoid-valued operads in terms of generators and relations}
In this subsection we recall the definition of an operad in terms of generators and relations; we refer to \cite{FresseI,Yau} for details.

 For a fixed non-empty set $\mathfrak{C}$ of colors we denote by $U: \Op(\mathcal{M}) \to \SymS (\mathcal{M})$ the forgetful functor from the category of $\mathfrak{C}$-colored operads valued in a bicomplete closed symmetric monoidal category $\mathcal{M}$ to the category of symmetric sequences in $\mathcal{M}$.
 This functor admits a left adjoint $F: 
\SymS (\mathcal{M})\longrightarrow \Op(\mathcal{M})$, the free operad functor.

The free operad functor and the cocompleteness of the category of operads can be used to define
 an operad via generators and
relations: Fix a collection of generators $G\in \SymS(\mathcal{M})$ and 
relations $R\in \SymS(\mathcal{M})$ together with two morphisms 
$r_1,r_2 : R \longrightarrow UF(G)$.  Via the adjunction 
$F\dashv U$, this defines two morphisms $F(R) \substack{  \to \\ \to  } F(G)$. The operad 
generated by $G$ and $R$ is the coequalizer of the parallel pair $F(R)  \substack{  \to \\ \to  }  F(G)$.

In the case that $\mathcal{M}$ is the category of groupoids, $\mathcal{M}=\Grpd$,
 we draw an object $g$ of 
the groupoid $G\binom{t}{ (c_1,\dots, c_n)}$ as a planar graph 
with one vertex 
labeled by $g$, $n$ ingoing legs labeled by $c_1, \dots, c_n$ and one
outgoing edge labeled by $t$. For example,
 we depict an object 
$g\in G\binom{t}{(c_1,c_2)}$ as 
\begin{flalign}\label{eqn:genpics}
\begin{tikzpicture}[cir/.style={circle,draw=black,inner sep=0pt,minimum size=2mm},
        poin/.style={circle, inner sep=0pt,minimum size=0mm}]
\node[poin] (Mout) [label=above:{\small $t$}] at (8,1) {};
\node[poin] (Min1) [label=below:{\small $c_1$}] at (7.7,0) {};
\node[poin] (Min2) [label=below:{\small $c_2$}] at (8.3,0) {};
\node[poin] (V)  [label=left:{\small $g$}] at (8,0.5) {};
\draw[thick] (Min1) -- (V);
\draw[thick] (Min2) -- (V);
\draw[thick] (V) -- (Mout);
\end{tikzpicture}. 
\end{flalign} 
We will draw morphisms as dotted lines between trees. For example, we
depict a morphism $f: g\longrightarrow g' \in G\binom{t}{(c_1,c_2)}$     as    
\begin{flalign}\label{eqn:genpics}
\begin{tikzpicture}[cir/.style={circle,draw=black,inner sep=0pt,minimum size=2mm},
        poin/.style={circle, inner sep=0pt,minimum size=0mm}]
\node[poin] (Mout) [label=above:{\small $t$}] at (7,1) {};
\node[poin] (Min1) [label=below:{\small $c_1$}] at (6.7,0) {};
\node[poin] (Min2) [label=below:{\small $c_2$}] at (7.3,0) {};
\node[poin] (V)  [label=left:{\small $g$}] at (7,0.5) {};
\node[poin] (Mout') [label=above:{\small $t$}] at (9,1) {};
\node[poin] (Min1') [label=below:{\small $c_1$}] at (8.7,0) {};
\node[poin] (Min2') [label=below:{\small $c_2$}] at (9.3,0) {};
\node[poin] (V')  [label=right:{\small $g'$}] at (9,0.5) {};
\node[poin] (A) [label=above:{\small $f$}] at (8,0.5) {};
\draw[thick] (Min1) -- (V);
\draw[thick] (Min2) -- (V);
\draw[thick] (V) -- (Mout);
\draw[thick] (Min1') -- (V');
\draw[thick] (Min2') -- (V');
\draw[thick] (V') -- (Mout');
\draw[dotted,->](V) -- (A) ;
\draw[dotted] (A) -- (V') ;
\end{tikzpicture}.
\end{flalign}    
Furthermore, when we draw a list of generators, we only draw `elementary'
generators and add elements corresponding to the action of the
permutation group and the composition of morphisms. Put more formally,
we 
only specify the groupoid as a directed graph,
 take the free groupoid 
generated by this graph and add elements corresponding to the action 
of the permutation group freely. 
Note that this automatically adds inverses for 
every morphism.

To simplify the notation later on, we draw diagrams like 
\begin{flalign}
\begin{tikzpicture}[cir/.style={circle,draw=black,inner sep=0pt,minimum size=2mm},
        poin/.style={circle, inner sep=0pt,minimum size=0mm}]
\node[poin] (Mout) [label=above:{\small $t$}] at (7.3,1.5) {};
\node[poin] (Min1) [label=below:{\small $c_1$}] at (6.7,0) {};
\node[poin] (Min2) [label=below:{\small $c_2$}] at (7.3,0) {};
\node[poin] (Min3) [label=below:{\small $c_3$}] at (7.9,0) {};
\node[poin] (V)  [label=left:{\small $g_1$}] at (7,0.5) {};
\node[poin] (V2)  [label=left:{\small $g_2$}] at (7.3,1) {};
\node[poin] (Mout') [label=above:{\small $t$}] at (9.3,1.5) {};
\node[poin] (Min1') [label=below:{\small $c_1$}] at (8.7,0) {};
\node[poin] (Min2') [label=below:{\small $c_2$}] at (9.3,0) {};
\node[poin] (Min3') [label=below:{\small $c_3$}] at (9.9,0) {};
\node[poin] (V')  [label=right:{\small $g_1'$}] at (9.6,0.5) {};
\node[poin] (V2')  [label=right:{\small $g_2'$}] at (9.3,1) {};
\node[poin] (A) [label=above:{\small $\alpha$}] at (8.3,0.5) {};
\draw[thick] (Min1) -- (V);
\draw[thick] (Min2) -- (V);
\draw[thick] (Min3) -- (V2);
\draw[thick] (V2) -- (Mout);
\draw[thick] (V) -- (V2);
\draw[thick] (Min1') -- (V2');
\draw[thick] (Min2') -- (V');
\draw[thick] (Min3') -- (V');
\draw[thick] (V2') -- (Mout');
\draw[thick] (V') -- (V2');
\draw[dotted,->](7.7,0.5) -- (A) ;
\draw[dotted] (A) -- (8.8,0.5) ;
\end{tikzpicture}
\end{flalign} 
with generators $g_1,g_2,g_1',g_2'$ 
to describe the following:
We formally add objects $A$ and $B$ in $G\binom{t}{(c_1,c_2,c_3)}$, a morphism
$\alpha : A \longrightarrow B$ between them and afterwards impose the relation
\begin{flalign}
\begin{tikzpicture}[cir/.style={circle,draw=black,inner sep=0pt,minimum size=2mm},
        poin/.style={circle, inner sep=0pt,minimum size=0mm}]
\node[poin] (Mout) [label=above:{\small $t$}] at (5.3,1.5) {};
\node[poin] (Min1) [label=below:{\small $c_1$}] at (4.7,0) {};
\node[poin] (Min2) [label=below:{\small $c_2$}] at (5.3,0) {};
\node[poin] (Min3) [label=below:{\small $c_3$}] at (5.9,0) {};
\node[poin] (V)  [label=left:{\small $g_1$}] at (5,0.5) {};
\node[poin] (V2)  [label=left:{\small $g_2$}] at (5.3,1) {};
\node[poin] (Mout') [label=above:{\small $t$}] at (9.3,1.5) {};
\node[poin] (Min1') [label=below:{\small $c_1$}] at (8.7,0) {};
\node[poin] (Min2') [label=below:{\small $c_2$}] at (9.3,0) {};
\node[poin] (Min3') [label=below:{\small $c_3$}] at (9.9,0) {};
\node[poin] (V')  [label=right:{\small $g_1'$}] at (9.6,0.5) {};
\node[poin] (V2')  [label=right:{\small $g_2'$}] at (9.3,1) {};
\node[poin] (A) [label=above:{$=$}] at (8.3,0.5) {};
\node[poin] (A) [label=above:{$B$}] at (7.5,0.5) {};
\node[poin] (A) [label=above:{$=$}] at (4.3,0.5) {};
\node[poin] (A) [label=above:{$A$}] at (3.5,0.5) {};
\node[poin] (.) [label=right:{,}] at (11,0) {};
\draw[thick] (Min1) -- (V);
\draw[thick] (Min2) -- (V);
\draw[thick] (Min3) -- (V2);
\draw[thick] (V2) -- (Mout);
\draw[thick] (V) -- (V2);
\draw[thick] (Min1') -- (V2');
\draw[thick] (Min2') -- (V');
\draw[thick] (Min3') -- (V');
\draw[thick] (V2') -- (Mout');
\draw[thick] (V') -- (V2');
\end{tikzpicture} 
\end{flalign} 
where the 
respective right hand sides of the equations describe the operadic composition of generators of $G$ in the operad $FG$ via trees.

Recall that the universal property of the coequalizer and the universal property of the free operad 
allow us to describe algebras over an operad defined in terms of generators and relations very concretely:

\begin{proposition}\label{Algebras with generators and relations}
An algebra $A$ in the category $\Cat$ of small categories 
over a $\mathfrak{C}$-colored operad in $\Grpd$ described by generators 
$G$ and relations $R$ consists of 
\begin{itemize}
\item a category $A_c$ for every $c \in \mathfrak{C}$, 
\item a functor
$A_g : A_{c_1}\times \dots \times  A_{c_n}\longrightarrow A_t$ for every generating object $g\in G\binom{t}{\underline{c}}$,
\item a natural
isomorphism $A_f : A_g \Longrightarrow A_g'$ for every generating morphism $f: g\longrightarrow g'$, 
\end{itemize} 
such that all relations described by $R$ are satisfied.
\end{proposition}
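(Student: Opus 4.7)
The plan is to chain two universal properties—the coequalizer defining the operad $P$ and the free-forgetful adjunction $F\dashv U$—and then unpack the resulting data inside an endomorphism operad. By definition, an algebra structure on a family $\{A_c\}_{c\in\mathfrak{C}}$ of small categories over a $\mathfrak{C}$-colored $\Cat$-valued operad is a morphism of $\Cat$-operads into the endomorphism operad $\End_\Cat(\{A_c\})$, whose operations in arity $\underline c \to t$ form the category of functors $A_{c_1}\times\cdots\times A_{c_n}\to A_t$ with natural transformations as morphisms. The inclusion $\Grpd\hookrightarrow\Cat$ is left adjoint to the core functor (any functor out of a groupoid factors through the subcategory of isomorphisms of the target), hence preserves colimits, so forming $P=\mathrm{coeq}(F(R)\rightrightarrows F(G))$ in $\Grpd$ and then including into $\Cat$ yields the same operad as forming the coequalizer directly in $\Cat$. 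Thus an algebra over $P$ in $\Cat$ unambiguously means a $\Cat$-operad morphism $\Phi:P\to\End_\Cat(\{A_c\})$.

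The universal property of the coequalizer identifies such $\Phi$ with operad morphisms $\widetilde\Phi:F(G)\to\End_\Cat(\{A_c\})$ satisfying $\widetilde\Phi\circ F(r_1)=\widetilde\Phi\circ F(r_2)$. The adjunction $F\dashv U$ then identifies $\widetilde\Phi$ with a morphism of symmetric sequences $\phi:G\to U(\End_\Cat(\{A_c\}))$, i.e.\ with a family of functors of groupoids $\phi_{t,\underline c}:G\binom{t}{\underline c}\to \End_\Cat(\{A_c\})\binom{t}{\underline c}$ compatible with the symmetric group actions.

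Unpacking $\phi$: on objects it assigns to each generating $g\in G\binom{t}{\underline c}$ a functor $A_g:A_{c_1}\times\cdots\times A_{c_n}\to A_t$; on morphisms it assigns to each generating $f:g\to g'$ a natural transformation $A_f:A_g\Longrightarrow A_{g'}$, which is automatically a natural isomorphism because $f$ is invertible in the groupoid $G\binom{t}{\underline c}$ and $\phi_{t,\underline c}$ is a functor. The remaining equality $\widetilde\Phi\circ F(r_1)=\widetilde\Phi\circ F(r_2)$ translates, via the standard description of $F(G)$ in terms of labelled trees built by free operadic composition of the generators, into the statement that the two tree-composites prescribed by each element of $R$ agree when the $A_g$ and $A_f$ are substituted for the generators—precisely the required relations.

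The main point requiring care is the correct set-up of the endomorphism operad in $\Cat$ together with the $\Grpd\rightleftarrows\Cat$ adjunction, ensuring that morphisms of groupoid-valued symmetric sequences into $U(\End_\Cat(\{A_c\}))$ automatically produce invertible natural transformations. Once these are in place, the argument is a direct double application of universal properties and requires no further calculation.
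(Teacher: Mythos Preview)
Your proof is correct and follows precisely the approach the paper indicates: the paper does not give a detailed proof but simply states that the proposition follows from the universal property of the coequalizer together with the universal property of the free operad, which is exactly the two-step reduction you carry out. Your additional care about the $\Grpd\hookrightarrow\Cat$ passage (ensuring that the coequalizer is preserved and that morphisms land in natural isomorphisms) is a welcome clarification that the paper leaves implicit.
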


\subsection{$E_2^G$ in terms of generators and relations\label{secgenrel}}
As a preparation for the description of the little bundle operad in terms of generators and relations we briefly recall the corresponding known description for 
the little disk operad \cite[Chapter 5 and 6]{FresseI}:
One introduces the groupoid-valued operad $\PBr$ of \emph{parenthesized braids} with the  generators 

\begin{flalign}
\begin{tikzpicture}[cir/.style={circle,draw=black,inner sep=0pt,minimum size=2mm},
        poin/.style={circle, inner sep=0pt,minimum size=0mm}]
\node[poin] (0) [] at(-1,0.25) {} ;
\node[poin] (T) [] at(-1,1) {} ;
\draw[fill=black] (0) circle (0.6mm);
\draw[thick] (0) -- (T);
\node[poin] (Mouta) [label=above:{}] at (1,1) {};
\node[poin] (Min1a) [label=below:{}] at (0.7,0) {};
\node[poin] (Min2a) [label=below:{}] at (1.3,0) {};
\node[poin] (Va)  [label=left:{}] at (1,0.5) {};
\draw[thick] (Min1a) -- (Va);
\draw[thick] (Min2a) -- (Va);
\draw[thick] (Va) -- (Mouta);
\node[poin] (Moutb) [label=above:{}] at (3,1) {};
\node[poin] (Min1b) [label=below:{}] at (2.7,0) {};
\node[poin] (Min2b) [label=below:{}] at (3.3,0) {};
\node[poin] (Vb)  [label=left:{}] at (3,0.5) {};
\node[poin] (Mout'b) [label=above:{}] at (5,1) {};
\node[poin] (Min1'b) [label=below:{}] at (4.7,0) {};
\node[poin] (Min2'b) [label=below:{}] at (5.3,0) {};
\node[poin] (V'b)  [label=left:{$\tau$}] at (5,0.5) {};
\node[poin] (Ab) [label=above:{\small $c$}] at (4,0.5) {};
\draw[thick] (Min1b) -- (Vb);
\draw[thick] (Min2b) -- (Vb);
\draw[thick] (Vb) -- (Moutb);
\draw[thick] (Min1'b) -- (V'b);
\draw[thick] (Min2'b) -- (V'b);
\draw[thick] (V'b) -- (Mout'b);
\draw[dotted,->](3.5,0.5) -- (Ab) ;
\draw[dotted] (Ab) -- (4.5,0.5) ;
\node[poin] (Mout) [label=above:{}] at (7.3,1.5) {};
\node[poin] (Min1) [label=below:{}] at (6.7,0) {};
\node[poin] (Min2) [label=below:{}] at (7.3,0) {};
\node[poin] (Min3) [label=below:{}] at (7.9,0) {};
\node[poin] (V)  [label=left:{}] at (7,0.5) {};
\node[poin] (V2)  [label=left:{}] at (7.3,1) {};
\node[poin] (Mout') [label=above:{}] at (9.3,1.5) {};
\node[poin] (Min1') [label=below:{}] at (8.7,0) {};
\node[poin] (Min2') [label=below:{}] at (9.3,0) {};
\node[poin] (Min3') [label=below:{}] at (9.9,0) {};
\node[poin] (V')  [label=right:{}] at (9.6,0.5) {};
\node[poin] (V2')  [label=right:{}] at (9.3,1) {};
\node[poin] (A) [label=above:{\small $\alpha$}] at (8.3,0.5) {};
\draw[thick] (Min1) -- (V);
\draw[thick] (Min2) -- (V);
\draw[thick] (Min3) -- (V2);
\draw[thick] (V2) -- (Mout);
\draw[thick] (V) -- (V2);
\draw[thick] (Min1') -- (V2');
\draw[thick] (Min2') -- (V');
\draw[thick] (Min3') -- (V');
\draw[thick] (V2') -- (Mout');
\draw[thick] (V') -- (V2');
\draw[dotted,->](7.7,0.5) -- (A) ;
\draw[dotted] (A) -- (8.8,0.5) ; 
\node[poin] (.) [label=right:{,}] at (14,0) {};
\node[poin] (M) [label=above:{}] at (11,1) {};
\node[poin] (M1a) [label=below:{}] at (10.7,0) {};
\node[poin] (M2a) [label=below:{}] at (11.3,0) {};
\node[poin] (MVa)  [label=left:{}] at (11,0.5) {};
\draw[fill=black] (M1a) circle (0.6mm);
\draw[thick] (M1a) -- (MVa);
\draw[thick] (M2a) -- (MVa);
\draw[thick] (MVa) -- (M);
\node[poin] (MA) [label=below:{$\ell$}] at (11.8,0.5) {};
\node[poin] (M') [label=above:{}] at (13.6,1) {};
\node[poin] (M1a') [label=below:{}] at (13.3,0) {};
\node[poin] (M2a') [label=below:{}] at (13.9,0) {};
\node[poin] (MVa')  [label=left:{}] at (13.6,0.5) {};
\draw[fill=black] (M2a') circle (0.6mm);
\draw[thick] (M1a') -- (MVa');
\draw[thick] (M2a') -- (MVa');
\draw[thick] (MVa') -- (M');
\node[poin] (MB) [label=below:{$r$}] at (12.8,0.5) {};
\draw[thick] (12.3,0) -- (12.3,1); 
\draw[dotted,->](11.2,0.5) -- (MA) ;
\draw[dotted] (MA) -- (12.2,0.5) ;
\draw[dotted](12.5,0.5) -- (MB) ;
\draw[dotted,,<-] (MB) -- (13.3,0.5) ; 
\end{tikzpicture} 
\end{flalign} 
where $\tau$ denotes the application of the non-trivial permutation of 
two elements. As relations, we impose the pentagon identity for $\alpha$,
the hexagon identities for $c$ and the triangle identity on $\ell$ and $r$.
Proposition \ref{Algebras with generators and relations} 
implies that algebras over $\PBr$ are by construction braided monoidal categories. 
Figure~\ref{Fig:E2} indicates the definition of a morphism  of operads from the free operad on the depicted generators to $\Pi E_2$
which by \cite[Theorem 6.2.4]{FresseI} descends to a morphism $\PBr \to \Pi E_2$.
\begin{figure}[h]
\begin{center}
\begin{overpic}[scale=0.6
,tics=10]
{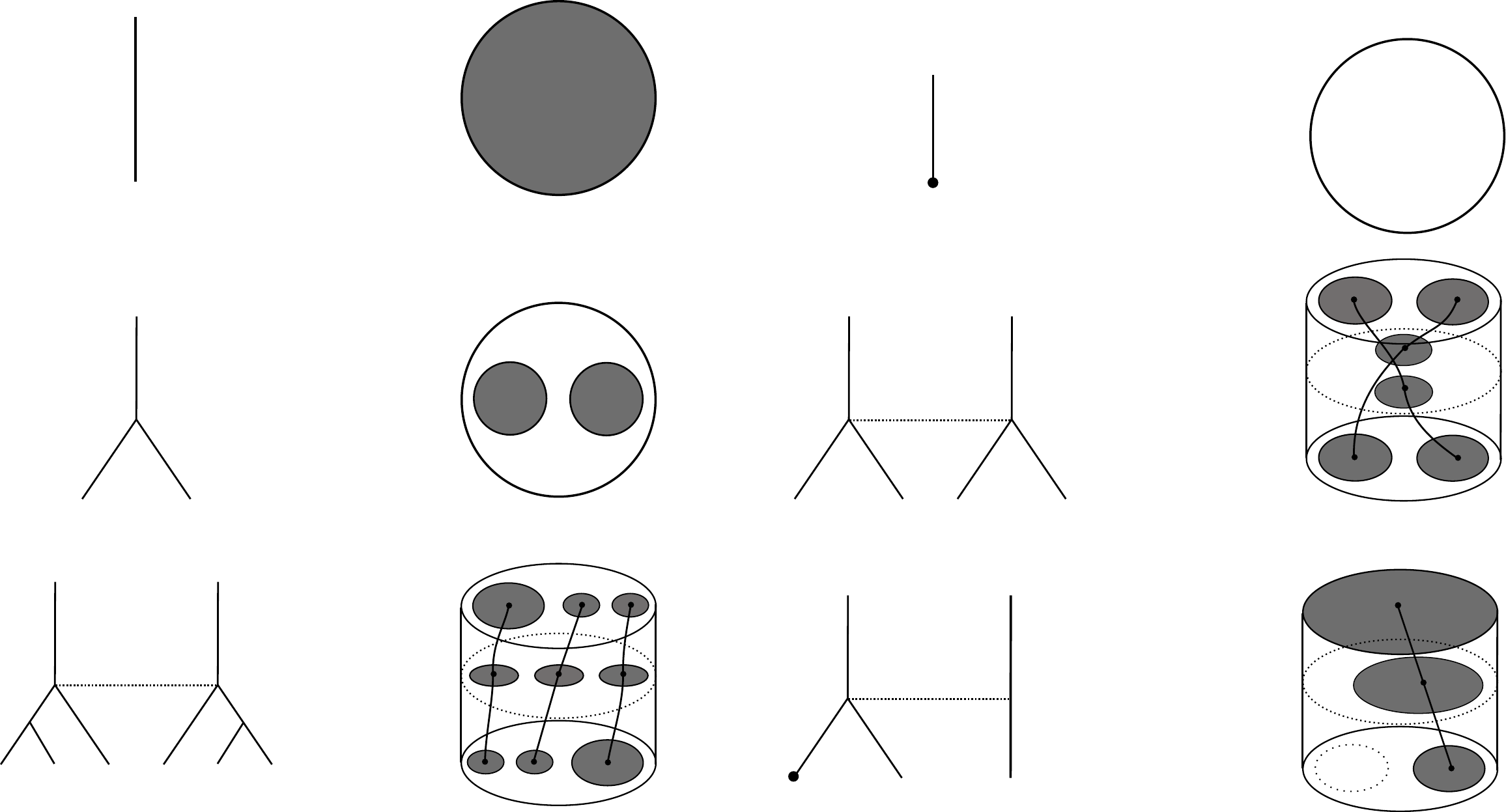}
\put(8,7.8){$>$}
\put(61,6.9){$>$}
\put(61,25.4){$>$}
\put(21,8){\Large$\longmapsto$}
\put(21,25){\Large$\longmapsto$}
\put(21,45){\Large$\longmapsto$}
\put(73,8){\Large$\longmapsto$}
\put(73,25){\Large$\longmapsto$}
\put(73,45){\Large$\longmapsto$}
\put(62,9){$\ell$}
\put(62,27){$c$}
\put(9,9){$\alpha$}
\put(68,26){$\tau$}
\end{overpic}
\vspace{0.5cm}
\caption{Definition of the morphism $\PBr\to \Pi E_2 $ on generators. We only draw the center of the circles for paths in $E_2$. The definition for $r$ is analogous to the definition for $\ell$.}\label{Fig:E2}
\end{center}
\end{figure}
By \cite[Proposition 6.2.2]{FresseI} this morphism is an equivalence, thereby giving us a presentation of $\Pi E_2$ in terms of generators and relations.

After this short review of the non-equivariant case,
we generalize this description in terms of generators and relations to the little bundle operad $E_2^G$
by introducing the $G$-colored operad $\PBr^G$ of \emph{parenthesized $G$-braids}.\label{pbrref}
Its generating operations and isomorphisms are given by

\begin{flalign}\label{Eq: Generatos EG 1}
% [inline block 0: 12 envs, 50365 chars -> data_tex | \begin{tikzpicture}[cir/.style={circle,draw=black,inner sep=0pt,minimum size=2mm},         poin/.style={circle, inner se...]

\end{equation}

In the next step, we describe categorical algebras over $\PBr^G$. To this end, we introduce the auxiliary operad $\P^G$\label{refpgauxop} of \emph{$G$-parentheses} which differs from $\PBr^G$ by the omission of the isomorphism $c$ in \eqref{Eq: Generatos EG 1} and all the relations it is involved in. From the construction of $\P^G$ and Proposition~\ref{Algebras with generators and relations} we will be able to read off that categorical $\P^G$-algebras are a type of equivariant monoidal categories, 
called \emph{$G$-crossed monoidal categories}. This notion is based on \cite[Section~VI.1]{turaevhqft} and was further developed by other authors \cite{maiernikolausschweigerteq,Coherence}; we will use the version given in \cite[Definition 5.1]{Coherence}. In particular, our notion of a $G$-crossed monoidal category does \emph{not} include rigidity (the existence of dual objects).

Let us recall the basic definitions:
For a group $G$, consider a family $(\cat{C}_g)_{g\in G}$ of categories $\cat{C}_g$ indexed by $G$. The category $\cat{C}_g$ is often referred to as \emph{twisted sector} for $g\in G$; the sector of the neutral element $e \in G$ is also called \emph{neutral sector}. 
Suppose now that  $(\cat{C}_g)_{g\in G}$ is endowed with a homotopy coherent $G$-action shifting the sectors by conjugation, i.e.\  $g\in G$ acts as an equivalence $\cat{C}_{h} \to \cat{C}_{ghg^{-1}}$ for every $h\in G$ such that the composition of these equivalences respects the group multiplication up to coherent isomorphism.
Now a \emph{$G$-equivariant monoidal product} on $(\cat{C}_g)_{g\in G}$ consists of functors $\otimes_{g,h}:\cat{C}_g \times \cat{C}_h \to \cat{C}_{gh}$, which are associative and unital (with the unit as an object in $\cat{C}_1$) up to coherent isomorphism 
such that the $G$-action and the monoidal product intertwine up to coherent isomorphism \cite[Section 3.1]{Coherence}.
If $(\cat{C}_g)_{g\in G}$ is equipped with a homotopy coherent $G$-action shifting the sectors by conjugation and an equivariant monoidal product, we call $(\cat{C}_g)_{g\in G}$ a  \emph{$G$-crossed 
monoidal category}; it is also called \emph{$G$-equivariant monoidal category} in \cite{maiernikolausschweigerteq}. 
The definition of $\P^G$ just translates the description of $G$-crossed monoidal categories in \cite[Definition 5.1]{Coherence} into the language of operads:

\begin{lemma}
A categorical algebra over $\P^G$ is equivalent to the data of a $G$-crossed monoidal category.
\end{lemma}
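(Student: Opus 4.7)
The plan is to apply Proposition~\ref{Algebras with generators and relations} directly to the presentation of $\P^G$ and then check, relation by relation, that the resulting structure on the family of colored categories $(\cat{C}_g)_{g\in G}$ matches verbatim the definition of a $G$-crossed monoidal category as given in \cite[Definition~5.1]{Coherence}. Since $\P^G$ is defined by removing from $\PBr^G$ precisely the braiding generator $c$ and all relations in which it appears, what remains is exactly the coherence data of an equivariant monoidal structure without any braiding.

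First I would translate the generators. The color set $G$ yields one category $\cat{C}_g$ per $g\in G$. The generator depicted as a black dot with output label $e$ provides a distinguished object $1\in \cat{C}_e$ (the monoidal unit). The unary generator with label $h$ converts an input colored by $g$ into an output colored by $hgh^{-1}$ and so gives functors $\phi_h : \cat{C}_g \to \cat{C}_{hgh^{-1}}$ for every $h\in G$; together these assemble into the $G$-action shifting sectors by conjugation. The binary generator with input colors $g,h$ and output color $gh$ gives the tensor functors $\otimes_{g,h}: \cat{C}_g \times \cat{C}_h \to \cat{C}_{gh}$. The generating isomorphisms $\alpha$, $\ell$, $r$ supply the associator and the left/right unitors, while $\beta$ supplies the natural isomorphism expressing that $\phi_h$ is monoidal, $\gamma$ supplies the comparison $\phi_{h_2}\phi_{h_1}\Rightarrow \phi_{h_2 h_1}$, $\delta$ supplies the triviality of $\phi_e$, and $\varepsilon$ supplies the isomorphism $\phi_h(1)\cong 1$.

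Next I would match the relations. The pentagon and triangle relations for $\alpha,\ell,r$ imposed in $\P^G$ give the usual monoidal-category axioms on the underlying non-equivariant data (sector by sector). Relations $(G1)$ and $(G2)$ express that $\beta$ satisfies the hexagon-like compatibility with $\alpha$ and the triangle-like compatibility with $\ell,r$, which is the statement that each $\phi_h$ is a monoidal functor. Relations $(G4)$ and $(G8)$ are the cocycle/associativity axioms for $\gamma$ together with its compatibility with $\beta$, encoding that the collection $(\phi_h,\beta_h,\gamma_{h_1,h_2})$ forms a coherent (pseudo-)action of the group $G$ by monoidal auto-equivalences. Relations $(G3)$ and $(G6)$ normalize the action on the unit of the group so that $\gamma_{h,e}$ and $\gamma_{e,h}$ interact correctly with $\delta$, and $(G5)$ together with $(G7)$ ensure that $\delta$ and $\varepsilon$ are themselves monoidal natural isomorphisms compatible with $\beta$. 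This is exactly the list of coherences in \cite[Definition~5.1]{Coherence}, with the difference that no braiding/$c$ appears, which corresponds precisely to having removed that generator.

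The only step requiring some care is the bookkeeping in the reverse direction: starting from a $G$-crossed monoidal category $(\cat{C}_g,\otimes,1,\alpha,\ell,r,\phi,\beta,\gamma,\delta,\varepsilon)$ in the sense of \cite[Definition~5.1]{Coherence}, one uses the universal property of operads-by-generators-and-relations to produce a unique morphism from $\P^G$ to the endomorphism operad of $(\cat{C}_g)_{g\in G}$, with the relations being automatically satisfied by the axioms of a $G$-crossed monoidal category. This gives a bijection between $\P^G$-algebra structures and $G$-crossed monoidal structures, which is the claimed equivalence. The main obstacle is just ensuring that the diagrammatic relations $(G1)$--$(G8)$ are written exactly in the shape of the axioms in \cite{Coherence}; this comparison is the reason the operad was designed as it was.
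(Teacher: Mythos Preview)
Your proposal is correct and follows essentially the same approach as the paper: both apply Proposition~\ref{Algebras with generators and relations} and then set up a dictionary between the generators/relations of $\P^G$ and the data/axioms of a $G$-crossed monoidal category in \cite[Definition~5.1]{Coherence}. The precise grouping of which relation $(G1)$--$(G8)$ corresponds to which axiom in \cite{Coherence} differs slightly from the paper's bookkeeping, but this is cosmetic and your identifications are consistent with the diagrams.
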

\begin{proof} We use the description for algebras over operads in terms of generators
and relations given in Proposition~\ref{Algebras with generators and relations} and provide a 
concrete dictionary to the conditions in \cite{Coherence} (some of the conditions in \cite{Coherence} 
are only explicitly spelled out in their strict form, which is justified by a strictification result proven before listing the conditions; our description gives all relations in their weakest form):
The functors $\otimes_{g,h}$ correspond to the third generator in \eqref{Eq: Generatos EG 1} and the unit is given by the first generator in \eqref{Eq: Generatos EG 1}. 
The morphisms corresponding to the associativity and unitality of $\otimes_{g,h}$ correspond to the generators $\alpha$, $\ell$ and $r$. 

The homotopy coherent $G$-action is obtained from the second generator in~\eqref{Eq: Generatos EG 1} and $\gamma$ from~\eqref{Eq: Generatos EG 2}
(in \cite{Coherence} the action is denoted by $g_*$ and the natural isomorphisms $\gamma$ correspond to  $\phi^{-1}$).
The coherence conditions for the action \cite[(4) on page 123 and Equation (3.1)]{Coherence} 
correspond to the relations $(G3)$ and $(G4)$.  

The compatibility between the action and the monoidal product is encoded in the generators $\beta$ and $\varepsilon$ (the natural isomorphism corresponding to $\beta$ is called $\psi^g$ in \cite{Coherence}).
Relations $(G1)$ and $(G2)$ correspond to the condition that the action is via monoidal
functors (which is \cite[Equation (3.2) and (2) on page 123]{Coherence}). 
The further compatibility between the action and the tensor product 
is implemented via the relations $(G5)$-$(G8)$ (here $(G5)$ is \cite[(3) on page 123]{Coherence}, $(G6)$ and $(G7)$ 
are \cite[(1) on page 123]{Coherence} and
$(G8)$ is \cite[Equation (3.3)]{Coherence}). 
\end{proof}

In the sequel, we will need the following fact about $\P^G$:

	\begin{lemma}\label{lemPG}
	The operad $\P^G$ is discrete and $\pi_0 \P^G \binom{h}{ \underline{g}}$ is given by
	the set $\Sigma_r \times_h G^r$ from Proposition~\ref{propoE2Gactiongroupoid}, i.e.\ by the set of all pairs $(\sigma,\underline{b})\in \Sigma_r \times G^r$ with $r: = |\underline{g}|$
	such that
	\begin{align}
	\prod_{j=1}^r b_{\sigma(j)}       g_j b_{\sigma(j)}  = h \ .        \label{eqntheconditionongroupelements}
	\end{align}
\end{lemma}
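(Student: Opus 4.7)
The plan is to prove the two assertions --- discreteness of $\P^G$ and the explicit description of $\pi_0 \P^G \binom{h}{\underline{g}}$ --- in turn, treating discreteness as a coherence statement and the $\pi_0$-computation as bookkeeping.

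For discreteness: I would invoke the lemma immediately preceding this one, which identifies categorical algebras over $\P^G$ with $G$-crossed monoidal categories. By the coherence theorem for $G$-crossed monoidal categories \cite[Section~5]{Coherence} (which is the $G$-equivariant analogue of Mac~Lane's coherence theorem), every such category is equivalent to a strict one; equivalently, in the free $G$-crossed monoidal category on any collection of $G$-graded objects, any two parallel morphisms built from associators, unitors, and the action-compatibility isomorphisms $\beta,\gamma,\varepsilon,\delta$ are equal. Translating back across Proposition~\ref{Algebras with generators and relations}, this exactly says that between any two objects of $\P^G\binom{h}{\underline{g}}$ there is at most one morphism, so the groupoid is equivalent to its set of isomorphism classes. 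Hence $\P^G$ is discrete.

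For the identification of $\pi_0$: an object of $\P^G\binom{h}{\underline{g}}$ in the free operad on the generators is represented by a planar rooted tree with $r$ leaves colored by $g_1,\dots,g_r$ whose internal vertices are labeled by the three object generators from \eqref{Eq: Generatos EG 1} (the unit $e$, the binary tensor $\otimes$, and conjugation by group elements). The associator/unitor isomorphisms $\alpha,\ell,r$ together with $\beta,\gamma,\varepsilon,\delta$ strictify such a tree into a linear product in which all conjugations have been distributed onto the individual inputs. Exactly as in the non-equivariant case \cite[Chapter~6]{FresseI} where $\PBr(r)$ encodes a permutation of the inputs in each parenthesized word, the tree also records the order $\sigma\in\Sigma_r$ in which the inputs $g_1,\dots,g_r$ are multiplied; in addition each input $g_j$ carries a net conjugation factor $b_j\in G$. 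The requirement that the root color equals $h$ is then precisely the equation \eqref{eqntheconditionongroupelements}, so $\pi_0\P^G\binom{h}{\underline g}=\Sigma_r\times_h G^r$.

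The main obstacle is step one: one must be confident that the specific relations $(G1)$--$(G8)$ (those among $(G1)$--$(G10)$ not involving the braiding $c$) really do imply the full coherence theorem cited from \cite{Coherence}, rather than some weaker version. The required translation is however the same one already carried out for the preceding lemma identifying $\P^G$-algebras with $G$-crossed monoidal categories, so this reduces to matching each generator and relation of $\P^G$ with a datum or axiom in \cite[Definition 5.1]{Coherence} and then quoting coherence there. Once this is in place, step two is essentially a description of normal forms for trees modulo the strictification, and the explicit bijection with $\Sigma_r\times_h G^r$ is immediate.
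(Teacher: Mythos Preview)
Your proposal is correct and follows essentially the same approach as the paper's proof: the paper likewise identifies $\pi_0\P^G$ via a normal-form argument for the colored associative operad with group-element decorations (citing \cite[Section~4.3]{bsw} rather than \cite{FresseI}), and likewise reduces discreteness to the coherence theorem for $G$-crossed monoidal categories in \cite{Coherence}. The only cosmetic difference is that the paper treats $\pi_0$ first and discreteness second, and invokes coherence directly on the operadic relations rather than passing through the free $G$-crossed monoidal category.
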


\begin{proof}
	The $\Set$-valued operad $\pi_0 \P^G$ has the same generators as $\P^G$, but all the isomorphisms introduced in \eqref{Eq: Generatos EG 1} and \eqref{Eq: Generatos EG 2} have to be replaced by actual equalities. Recalling the generators and relations for the associative operad, we see that $\pi_0 \P^G$ is a colored version  of the associative operad where all ingoing legs can be labeled by a group element. Group elements can be pushed through the multiplication (relation $\beta$) and be composed (relation $\gamma$) according to the group law. A label by the neutral element is treated as `no label' (relation $\delta$) and a label on a leg over the unit can be deleted (relation $\varepsilon$). These relations allow to bring each of the operations in $\pi_0 \P^G \binom{h}{ \underline{g}}$ into a unique standard form where we can describe them as a pair $(\sigma,\underline{b})$ of a permutation and an $r$-tuple of group elements by arguments analogous to those in \cite[Section~4.3]{bsw}. The prescription of ingoing and outgoing colors leads to the requirement \eqref{eqntheconditionongroupelements} for $(\sigma,\underline{b})$.
	
	We still have to prove that all fundamental groups of  $\P^G \binom{h}{ \underline{g}}$ are trivial: For this we have to make sure that the given coherence diagrams for $\alpha,\beta,\gamma,\delta$ and $\varepsilon$ ensure that for each object in $\P^G \binom{h}{ \underline{g}}$ there is only one morphism starting and ending at that object. The needed arguments amount precisely to the coherence theorem for $G$-crossed monoidal categories \cite{Coherence}.
\end{proof}

A \emph{$G$-braiding} on a $G$-crossed monoidal category $(\cat{C}_g)_{g\in G}$ is a family of coherent isomorphisms
\begin{align}
X \otimes Y \cong g.Y \otimes X
\end{align}
for $X \in \cat{C}_g$ and $Y\in \cat{C}_h$. 
A $G$-crossed monoidal category equipped with a $G$-braiding is called a
\emph{braided $G$-crossed category}.
The necessary coherence conditions are given in 
\cite[Definition 5.4]{Coherence}. 

\begin{proposition}\label{propgcrossedcat}
A categorical algebra over $\PBr^G$ is equivalent to the data of a 
braided $G$-crossed category. 
\end{proposition}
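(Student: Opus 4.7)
The plan is to leverage the previous lemma identifying $\P^G$-algebras with $G$-crossed monoidal categories, and then analyze exactly what extra structure the additional generator $c$ and the relations $(G9)$, $(G10)$ add when passing from $\P^G$ to $\PBr^G$. Since $\PBr^G$ is obtained from $\P^G$ by freely adjoining $c$ (together with its action under $\Sigma_r$) and then imposing the hexagon-type relations $(G9)$ and $(G10)$, I will use Proposition~\ref{Algebras with generators and relations} to read off that a $\PBr^G$-algebra $(\cat{C}_g)_{g\in G}$ is precisely a $\P^G$-algebra together with one extra natural isomorphism per pair $(g,h) \in G \times G$, subject to two extra families of coherence diagrams.

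First, I would invoke the previous lemma to extract from the restriction of the algebra along $\P^G \hookrightarrow \PBr^G$ the datum of a $G$-crossed monoidal category $(\cat{C}_g)_{g\in G}$, with associator $\alpha$, unitors $\ell,r$, action constraints $\gamma,\delta$, compatibility $\beta,\varepsilon$ as described. Next, I would interpret the generator $c$ in \eqref{Eq: Generatos EG 1}: for $X \in \cat{C}_g$ and $Y \in \cat{C}_h$, the value of the algebra on $c$ is a natural isomorphism
\begin{equation}
c_{X,Y} \colon X \otimes Y \xrightarrow{\ \cong\ } (g.Y) \otimes X,
\end{equation}
which is exactly the type of morphism prescribed by a $G$-braiding in \cite[Definition 5.4]{Coherence}. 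The fact that $c$ lives in the appropriate color $\binom{gh}{g,h}$ (with the appropriate output color $gh$ matching $(ghg^{-1})\cdot g$) ensures that this isomorphism has the right source and target.

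Then I would match the relations. Relation $(G9)$ expresses the compatibility of $c$ with the $G$-action and with $\beta$ (the monoidality of the action): this is exactly the condition that the $G$-braiding is $G$-equivariant, i.e.\ the diagram in \cite[Definition 5.4]{Coherence} asserting that $c$ intertwines the action on source and target. Relation $(G10)$ (together with the ``braiding from left to right'' counterpart) is the hexagon for $c$ involving the associator $\alpha$ and the action constraint $\gamma$; under our identification these are precisely the two hexagon axioms of a braided $G$-crossed category. Conversely, given a braided $G$-crossed category, the assignment that sends each generator to the corresponding structure morphism verifies all the relations by the axioms, hence defines a $\PBr^G$-algebra.

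The main obstacle is bookkeeping: one has to check carefully that the colors and the direction of the arrows in the pictorial relations $(G9)$, $(G10)$ coincide, on the nose, with the equivariant hexagon axioms in \cite[Definition 5.4]{Coherence}, and that no further relations are hidden in the symmetric-group action we add freely in the free operad construction. This is a straightforward but tedious diagram comparison, entirely parallel to how the hexagon axioms for ordinary braided monoidal categories are read off from $\PBr$ in \cite[Chapter~6]{FresseI}; once the dictionary $c \leftrightarrow$ $G$-braiding is fixed, the correspondence between relations and axioms is forced.
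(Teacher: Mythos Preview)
Your proposal is correct and follows essentially the same approach as the paper: both invoke Proposition~\ref{Algebras with generators and relations}, identify the generator $c$ with the $G$-braiding, and match the relations $(G9)$ and $(G10)$ to the coherence conditions in \cite[Definition~5.4]{Coherence}. The paper's proof is slightly more terse and pins down the exact correspondence --- $(G9)$ to \cite[Equation~(5.1)]{Coherence}, the two halves of $(G10)$ to \cite[Equations~(5.3) and (5.2)]{Coherence} --- whereas you describe the match more qualitatively, but the argument is the same.
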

\begin{proof}
We use again Proposition~\ref{Algebras with generators and relations}. The $G$-braiding is operadically captured by 
the generator $c$ in \eqref{Eq: Generatos EG 1}. The relation $(G9)$ corresponds 
to \cite[Equation (5.1)]{Coherence}, the first relation in $(G10)$ to \cite[Equation (5.3)]{Coherence} and the equation not spelled out
in detail in $(G10)$ to \cite[Equation (5.2)]{Coherence}.
\end{proof}

For $\PBr^G$ we have a description analogous to the one given for $\P^G$ in Lemma~\ref{lemPG}:
 
\begin{proposition}\label{propequivpbr}
Color-wise there is an equivalence \begin{align}  \PBr^G \binom{h}{ \underline{g}} \simeq (\Sigma_r \times_h G^r)//B_r, \quad r=|\underline{g}|\end{align} of groupoids. 
\end{proposition}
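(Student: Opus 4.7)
The plan is to construct a functor
\[
\Phi_{h,\underline{g}}\colon \PBr^G\binom{h}{\underline{g}} \longrightarrow (\Sigma_r \times_h G^r)//B_r
\]
and show it is an equivalence of groupoids. On objects, I first observe that adjoining the braiding generator $c$ from \eqref{Eq: Generatos EG 1} to $\P^G$ (together with the relations it is subject to) does not add new objects. Hence the object set of $\PBr^G\binom{h}{\underline{g}}$ coincides with that of $\P^G\binom{h}{\underline{g}}$, which by Lemma~\ref{lemPG} is precisely $\Sigma_r \times_h G^r$. This takes care of essential surjectivity once $\Phi_{h,\underline{g}}$ is defined as the identity on objects.

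On morphisms, I define $\Phi_{h,\underline{g}}$ on generators by sending every structural isomorphism $\alpha,\beta,\gamma,\delta,\varepsilon,\ell,r$ to an identity (a consistent choice, since these become identities in $\P^G$ by Lemma~\ref{lemPG}), and sending an instance of $c$ braiding adjacent strands $j,j+1$ to the generator $c_{j,j+1}\in B_r$ acting on $\Sigma_r\times_h G^r$ through the Hurwitz formula \eqref{eqnactionofbraidgroup}. Well-definedness amounts to checking that the relations $(G1)$–$(G10)$ of $\PBr^G$ are satisfied in $(\Sigma_r\times_h G^r)//B_r$: relations $(G1)$–$(G8)$ become trivial because they involve only structural isomorphisms; the hexagons in $(G10)$ translate to the braid relation $c_i c_{i+1} c_i = c_{i+1} c_i c_{i+1}$; and $(G9)$ together with the remaining parts of $(G10)$ encode exactly the compatibility of the Hurwitz action with conjugation on the $G$-labels, which was described in Lemma~\ref{lemdiagramforw2}.

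For surjectivity on morphisms I argue that every morphism in $\PBr^G\binom{h}{\underline{g}}$ is, up to the structural isomorphisms, a composition of instances of $c$, hence maps under $\Phi_{h,\underline{g}}$ to a braid in $B_r$; combined with essential surjectivity, every morphism of $(\Sigma_r\times_h G^r)//B_r$ is hit. The serious work lies in faithfulness: one must exclude the possibility that the relations $(G9)$ and $(G10)$ secretly impose additional identifications among braid words beyond the braid relation. The cleanest way I would handle this is to lift $\Phi_{h,\underline{g}}$ to an operad morphism $\widetilde{\Phi}\colon \PBr^G \to \Pi E_2^G$ extending Figure~\ref{Fig:E2} by sending $\beta,\gamma,\delta,\varepsilon$ to the canonical reparametrizations of bundle data in $\Pi E_2^G$ (as dictated by the groupoid presentation in Lemma~\ref{corpresentationw2g}), and to use Proposition~\ref{Prop: Unique lifts} to ensure the specified images indeed satisfy the geometric analogs of $(G1)$–$(G10)$. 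Then Proposition~\ref{propoE2Gactiongroupoid} identifies the colorwise target $\Pi E_2^G\binom{\widehat{h}}{\widehat{\underline{g}}}$ with $(\Sigma_r\times_h G^r)//B_r$, and faithfulness of $\Phi_{h,\underline{g}}$ reduces to faithfulness of $\widetilde{\Phi}$ colorwise.

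The main obstacle is verifying in detail that $\widetilde{\Phi}$ respects every relation of $\PBr^G$: the non-equivariant generators $\alpha, \ell, r, c$ are handled by the classical equivalence $\PBr \simeq \Pi E_2$ from \cite[Theorem 6.2.4]{FresseI}, but for the genuinely equivariant relations involving $\beta,\gamma,\delta,\varepsilon$ one must carefully track bundle data along paths in $E_2(r)$ whose little disks have non-intersecting boundaries, so that Proposition~\ref{Prop: Unique lifts} and Remark~\ref{remProp: Unique lifts} apply to uniquely determine the corresponding morphisms in $\Pi E_2^G$. Once this geometric bookkeeping is done, the comparison with Proposition~\ref{propoE2Gactiongroupoid} yields the claimed equivalence.
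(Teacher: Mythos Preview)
Your setup of $\Phi_{h,\underline{g}}$---identity on objects via Lemma~\ref{lemPG}, structural isomorphisms sent to identities, and $c$ on adjacent strands sent to the Hurwitz generator $c_{j,j+1}$---matches the paper's approach, and your treatment of well-definedness and fullness is fine. The gap is in your argument for faithfulness.

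You propose to deduce faithfulness of $\Phi_{h,\underline{g}}$ from faithfulness of an operad map $\widetilde{\Phi}\colon \PBr^G \to \Pi E_2^G$, but you never establish the latter. In the paper's logical order this is circular: the equivalence $\widetilde{\Phi}$ (Theorem~\ref{thmmain}) is proved \emph{using} Proposition~\ref{propequivpbr} via a 2-out-of-3 argument that takes the equivalence $\PBr^G\binom{h}{\underline{g}}\simeq (\Sigma_r\times_h G^r)//B_r$ as input. The geometric tools you invoke (Proposition~\ref{Prop: Unique lifts} and Remark~\ref{remProp: Unique lifts}) only give uniqueness of lifts once source and target are specified; that is exactly what is needed to show $\widetilde{\Phi}$ \emph{respects the relations} of $\PBr^G$, but it says nothing about $\widetilde{\Phi}$ being faithful. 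So your reduction trades one unproven faithfulness claim for another.

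The paper's argument for faithfulness is purely algebraic and avoids the detour through $\Pi E_2^G$. Since $\P^G$ is discrete (Lemma~\ref{lemPG}), after choosing standard representatives for the classes in $\pi_0\P^G\binom{h}{\underline{g}}\cong\Sigma_r\times_h G^r$ every morphism in $\PBr^G\binom{h}{\underline{g}}$ is a word in the $c_{j,j+1}$ interspersed with \emph{uniquely determined} $\P^G$-morphisms bringing objects into the form where $c$ applies. The only relations among such words are therefore those induced by the hexagon axioms in $(G10)$, and---exactly as for $\PBr$ in the non-equivariant case---these are precisely the braid relations. Relation $(G9)$ only pins down the target of $c$ in terms of the $G$-action and imposes no further identification among braid words. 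This yields faithfulness directly.
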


\begin{proof}
We can describe $\PBr^G$ by adding to $\P^G$ the isomorphism $c$ and the hexagon axiom that it has to satisfy. 

For each $(\sigma,\underline{b}) \in \pi_0 \P^G \binom{h}{ \underline{g}}$, see Lemma~\ref{lemPG}, and $1\le j\le r-1$ the isomorphism $c$ induces an isomorphism
\begin{align}
(\sigma,\underline{b})  \xrightarrow{\ c_{j,j+1}\ }    (\tau_{j,j+1} \sigma      ,    (b_1,\dots,b_jb_{j+1}b_j^{-1},b_j,\dots,b_r    )    )  =: c_{j,j+1}. (\sigma,\underline{b}) \ ,     \label{eqnbraction}
\end{align}
where $\tau_{j,j+1}$ is the transposition of $j$ and $j+1$.
Formally, this is achieved by choosing a standard representative for the classes, say
\begin{flalign}
\begin{tikzpicture}[scale=0.5]
		\node [style] (0) at (-15, 0) {};
		\node [style] (1) at (-16, -2) {};
		\node [style] (2) at (-14, -2) {};
		\node [style] (3) at (-14, 2) {};
		\node [style] (4) at (-12, -2) {};
		\node [style] (5) at (-13, 4) {};
		\node [style] (6) at (-12, 6) {};
		\node [style] (7) at (-11, 4) {};
		\node [style] (8) at (-10, 2) {};
		\node [style] (9) at (-9, 0) {};
		\node [style] (10) at (-8, -2) {};
		\node [style] (12) at (-11.75, 1.75) {$\dots$};
		\node [style] (13) at (-16, -3) {};
		\node [style] (14) at (-14, -3) {};
		\node [style] (15) at (-12, -3) {};
		\node [style] (16) at (-8, -3) {};
		\node [style] (17) at (-10, -2.5) {$\dots$};
		\node [style] (18) at (-15.5, -2.5) {$b_1$};
		\node [style] (19) at (-7.5, -2.5) {$b_n$};
		\node [style] (20) at (-8, -2.5) {};
		\node [style] (21) at (-13.5, -2.5) {$b_2$};
		\node [style] (22) at (-11.5, -2.5) {$b_3$};
		\node [style] (23) at (-12, -4) {};
		\node [style] (24) at (-12, -4) {$\sigma$};
		\node [style] (25) at (-6, -3.8) {,}; 
		\draw (0.center) to (1.center);
		\draw (0.center) to (2.center);
		\draw (0.center) to (3.center);
		\draw (3.center) to (4.center);
		\draw (10.center) to (6.center);
		\draw (6.center) to (3.center);
		\draw (1.center) to (13.center);
		\draw (2.center) to (14.center);
		\draw (4.center) to (15.center);
		\draw (10.center) to (16.center);
\end{tikzpicture}
\end{flalign}
applying operations in $\P^G$ to bring it into a form such that the braiding can be applied to the legs $j$ and $j+1$ and restoring the standard form by operations in $\P^G$. The $\P^G$-operations are always uniquely determined by starting point and endpoint thanks to discreteness of $\P^G$ (Lemma~\ref{lemPG}).
Now  $ \PBr^G \binom{h}{ \underline{g}}$ is equivalent to the groupoid whose objects are given by the set 
$\pi_0 \P^G \binom{h}{ \underline{g}}\cong \Sigma_r \times_h G^r$
and whose morphisms are words in the $c_{j,j+1}$ modulo the (induced) hexagon relations which -- as in the non-equivariant case -- amount precisely to the braid group relations. This proves that $ \PBr^G \binom{h}{ \underline{g}}$ is equivalent to the action groupoid of the $B_r$-action on $\Sigma_r \times_h G^r$ given by \eqref{eqnbraction}.
\end{proof}

\begin{remark}\label{remGcrossedalgebras}
	We can read off from Proposition~\ref{propequivpbr} that $\pi_0 \PBr^G$-algebras with values in vector spaces are $G$-crossed algebras as considered in \cite{kaufmannorb,turaevhqft}.
\end{remark}

Next we construct an operad morphism $\Phi : \PBr^G     \to  \Pi E_2^{G}$ 
generalizing the corresponding construction for $E_2$. 
As the underlying map of colors we use the object function $\widehat{-} : G \to \Map (\mathbb{S}^1,BG)$ of the equivalence
\begin{align}
\widehat{-} : G//G \xrightarrow{\   \simeq \ } \Pi \Map(\mathbb{S}^1,BG) \label{weakinverseeqn2}
\end{align}
from \eqref{eqnweakinverse}.

To this end, we specify the
images of the generators given in 
\eqref{Eq: Generatos EG 1} and \eqref{Eq: Generatos EG 2} (we will prove as part of Theorem~\ref{thmmain} that this assignment is compatible with the relations $(G1)$-$(G10)$):
\begin{enumerate}
	
	\item The generator
	\begin{align}\begin{tikzpicture}[cir/.style={circle,draw=black,inner sep=0pt,minimum size=2mm},
	poin/.style={circle, inner sep=0pt,minimum size=0mm}]
	\node[poin] (0) [] at(-1,1.75) {} ;
	\node[poin] (T) [label=above:{\small $e$}] at(-1,2.5) {} ;
	\draw[fill=black] (0) circle (0.6mm);
	\draw[thick] (0) -- (T);
	\end{tikzpicture}
	\end{align}
	is mapped by $\Phi$ to the embedding of an empty collection of disks (as in the non-equivariant case, see Figure~\ref{Fig:E2}) together with the constant map to $BG$ selecting the basepoint.
	
	\item 
The generator 
\begin{align}
\begin{tikzpicture}[cir/.style={circle,draw=black,inner sep=0pt,minimum size=2mm},
        poin/.style={circle, inner sep=0pt,minimum size=0mm}]
\node[poin] (0') [label=below:{\small $g$}] at(0,1.5) {} ;
\node[poin] (K) [label=right:{\small $h$}] at(0,2) {} ;
\node[poin] (T') [label=above:{\small $ hgh^{-1}$}] at(0,2.5) {} ;
\draw[thick] (0') -- (T');
\end{tikzpicture}
\end{align}
is mapped by $\Phi$ to the embedding $\disk^2 \to \disk^2 , x \mapsto x/2$ and an arbitrary choice 
\begin{align}
\disk^2\setminus \frac{\disk^2}{2} \cong \mathbb{S}^1\times\left[\frac{1}{2},1\right] \longrightarrow BG
\end{align}
of a representative in the homotopy class $\widehat{h}$ corresponding to the morphism $h : g \to hgh^{-1}$ in $G//G$.  
\label{list2}

\item 
The generator \begin{align}  \begin{tikzpicture}[cir/.style={circle,draw=black,inner sep=0pt,minimum size=2mm},
poin/.style={circle, inner sep=0pt,minimum size=0mm}]
\node[poin] (Mouta) [label=above:{\small $gh$}] at (1.5,2.5) {};
\node[poin] (Min1a) [label=below:{\small $g$}] at (1.2,1.5) {};
\node[poin] (Min2a) [label=below:{\small $h$ }] at (1.8,1.5) {};
\node[poin] (Va)  [label=left:{}] at (1.5,2) {};
%\draw[fill=black] (V) circle (0.6mm);
\draw[thick] (Min1a) -- (Va);
\draw[thick] (Min2a) -- (Va);
\draw[thick] (Va) -- (Mouta);
\end{tikzpicture}
\end{align} 
is mapped by $\Phi$ to the embedding (see also Figure \ref{Fig:E2})
\begin{figure}
\begin{center}
\begin{overpic}[scale=0.2,%grid
,tics=10]
{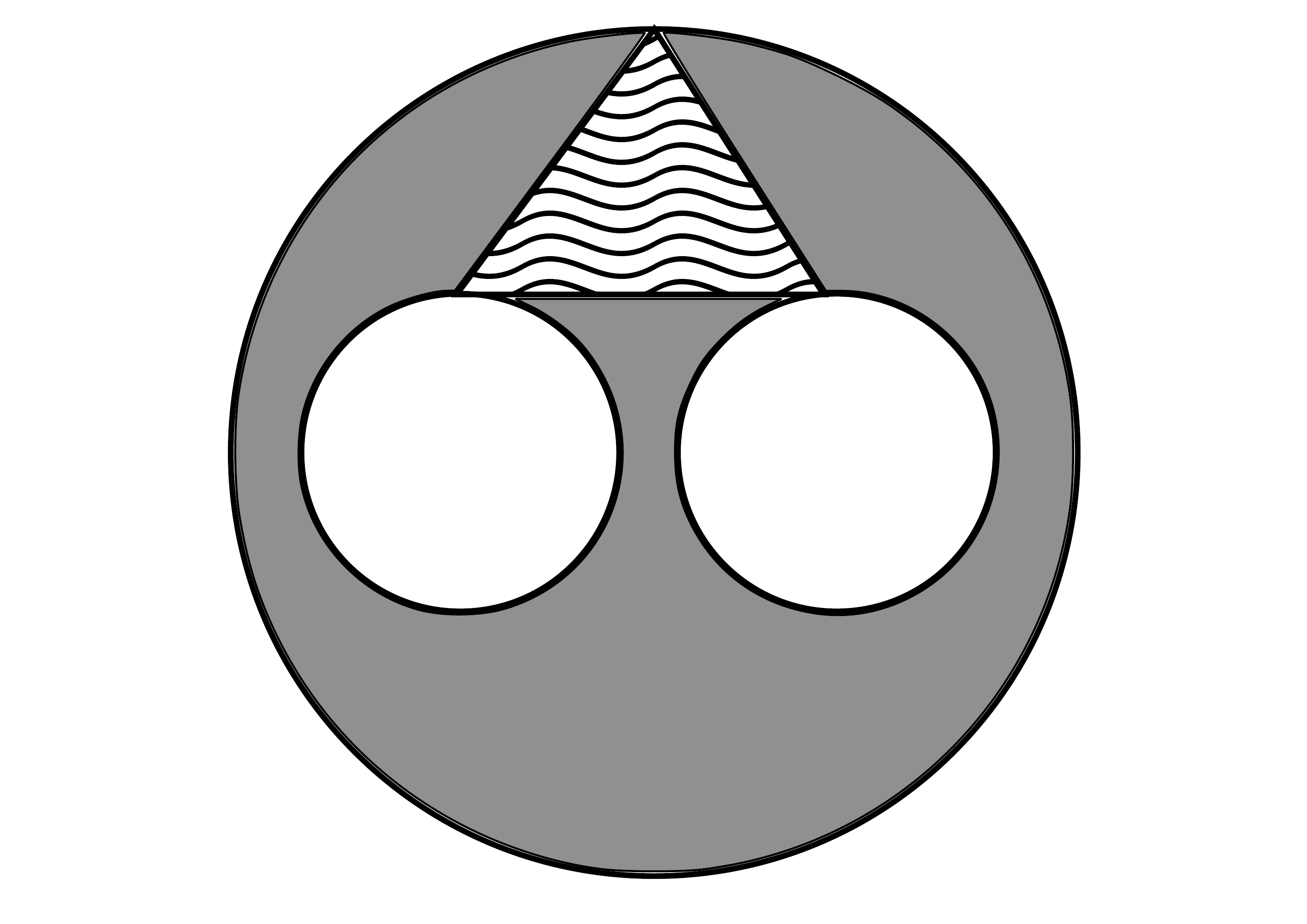}
\put(34,42){$\widehat{g}$}
\put(64,42){$\widehat{h}$}
\put(48,72){$\widehat{gh}$}
\end{overpic}
\vspace{0.5cm}
\caption{A sketch for the definition of the map $\varphi$.}\label{Fig:Product}
\end{center}
\end{figure}
\begin{align}
f:  \disk^2 \sqcup \disk^2 & \longrightarrow \disk^2 \\
x_1 & \longmapsto \frac{3}{8}\cdot x_1 - \frac{1}{2}  \left(\begin{array}{c}
0 \\ 
1
\end{array} \right) \\
x_2 & \longmapsto \frac{3}{8}\cdot x_2 + \frac{1}{2}  \left(\begin{array}{c}
0 \\ 
1
\end{array} \right)  \ \ .
\end{align} 
To equip $\comp(f)$ with a continuous map $\varphi$ to $BG$,
 we consider the decomposition of $\comp(f)$ sketched in Figure \ref{Fig:Product}. The value of $\varphi$ on the boundary is given by $\widehat{g}, \widehat{h}$ and $\widehat{gh}$. On the wavy triangle we choose $\varphi$ to be constant. Note that the gray area is homeomorphic 
to the standard  2-simplex. The 2-simplices of $BG$ are described by pairs of group elements, and we equip the gray simplex with the $BG$-valued map corresponding to $(g,h)$.   \label{list3}

\begin{figure}
\begin{center}
\begin{subfigure}[b]{0.49\textwidth}
\centering
\begin{overpic}[scale=0.6%,grid
,tics=10]
{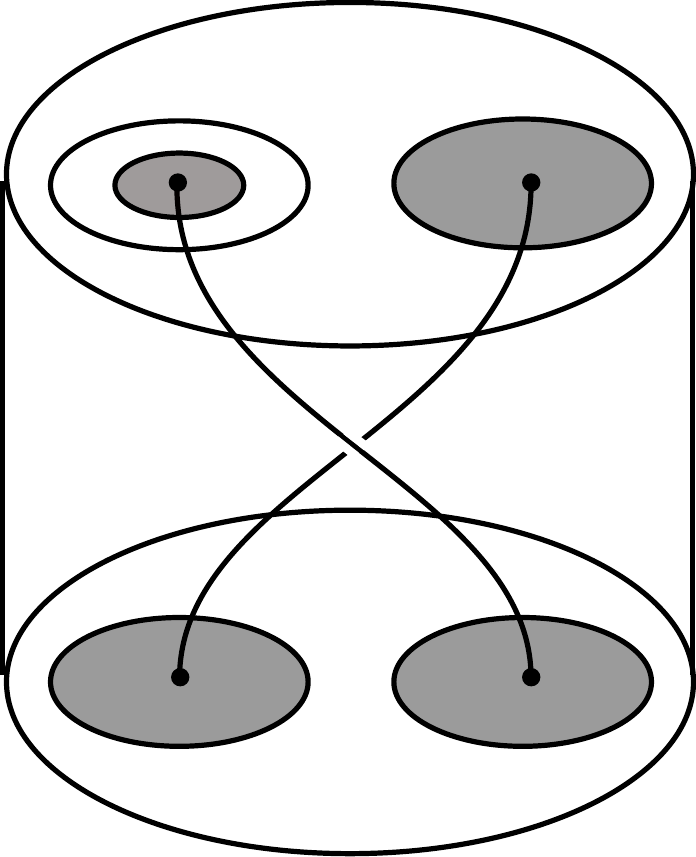}
\put(29.5,78){\small $\widehat{g}$}
\end{overpic}
\vspace{0.5cm}
\caption{$c$}\label{Fig:Braiding}
\end{subfigure}
\begin{subfigure}[b]{0.49\textwidth}
\centering
\begin{overpic}[scale=0.6%,grid
,tics=10]
{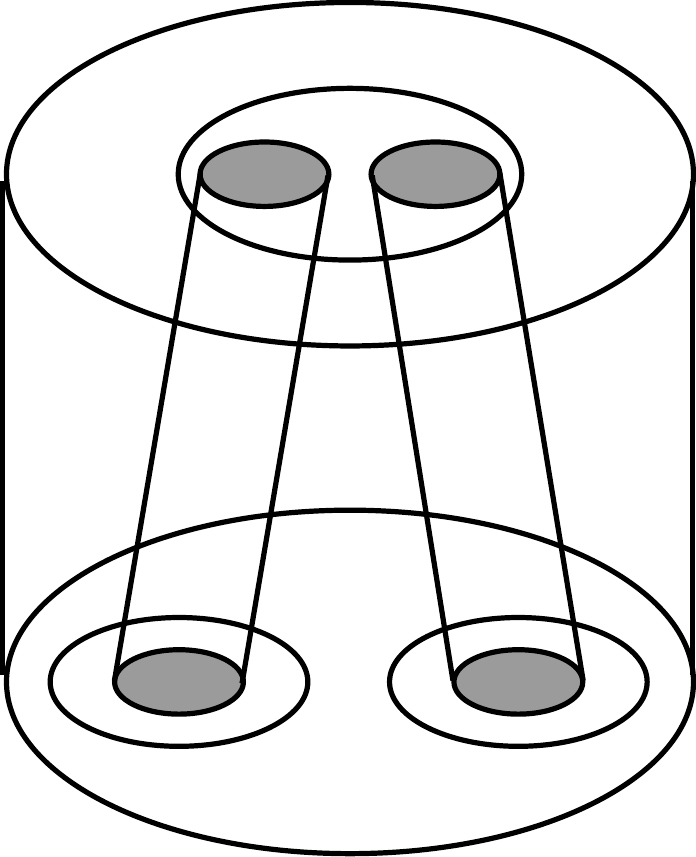}
\put(68,79){\small $\widehat{h}$}
\put(70,18){\small $\widehat{h}$}
\put(30,18){\small $\widehat{h}$}
\end{overpic}
\caption{$\beta$}\label{Fig:Beta}
\end{subfigure}
\caption{Definition of the morphism $\PBr^G\to \Pi E^G_2 $ on $c$ and $\beta$. In (a) the left circle at the bottom is labeled 
with $\widehat{g}$; the circle on the right with $\widehat{h}$. Again, we have only drawn the center of every disk when depicting paths.}
\end{center}
\end{figure}
\item The path in $E_2(2)$ underlying the $\Phi$-image of the braiding $c$ in $\Pi E^G_2\binom{\widehat{gh}}{(\widehat{g},\widehat{h})}$
is sketched in Figure~\ref{Fig:Braiding}. By Proposition
\ref{Prop: Unique lifts} there exists a lift to the fundamental groupoid of the little bundles operad which by Remark~\ref{remProp: Unique lifts}
is unique once we specify starting point and endpoint.  
The starting point is the point defined in~\ref{list3}. The endpoint is determined by the images under $\Phi$ of the generators that the target of $c$ in \eqref{Eq: Generatos EG 1} is built from and their operadic composition. 
This describes the image of $c$ in $\Pi E_2^G$.

\item 
The image of the morphisms $\ell$ and $r$ cannot be constructed using Proposition~\ref{Prop: Unique lifts} since the disks touch at the end point. However, we can use Proposition~\ref{Prop: Unique lifts} to get a path from the start point of $\ell$ and $r$ to the disk embedding $x\longmapsto x/2$ equipped with a map to $BG$ which is constant in the radial direction. Now we can rescale the disk as in Figure \ref{Fig:Delta} and leave the map constant. The composition of these two paths defines the image of $\ell$ and $r$.

\item To define the image of $\alpha$, we first define the underlying path in $E_2(3)$ to agree with the corresponding path for $\PBr$, see Figure \ref{Fig:E2}. The corresponding morphism of $\Pi E_2^G\binom{\widehat{g_1g_2g_3}}{(\widehat{g_1}, \widehat{g_2}, \widehat{g_3})}$ is again the unique lift which exists by Proposition \ref{Prop: Unique lifts} and Remark~\ref{remProp: Unique lifts}.

\begin{figure}
\begin{center}
\begin{subfigure}[b]{0.49\textwidth}
\centering
\begin{overpic}[scale=0.6%,grid
,tics=10]
{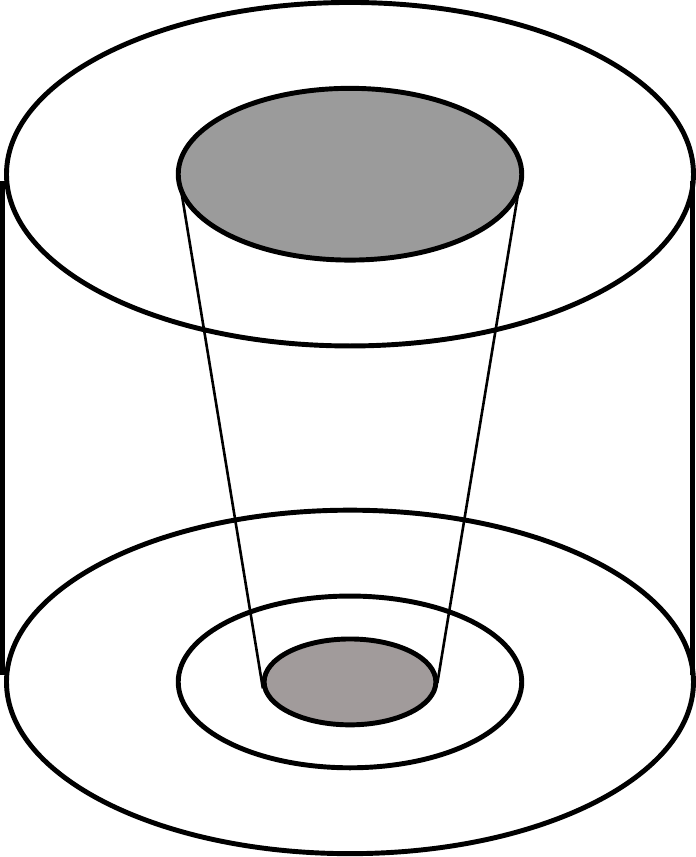}
\put(62,79){\small $\widehat{h_2h_1}$}
\put(66,20){\small $\widehat{h}_2$}
\put(53,20){\small $\widehat{h}_1$}
\end{overpic}
\vspace{0.5cm}
\caption{$\gamma$}\label{Fig:Gamma}
\end{subfigure}
\begin{subfigure}[b]{0.49\textwidth}
\centering
\begin{overpic}[scale=0.6%,grid
,tics=10]
{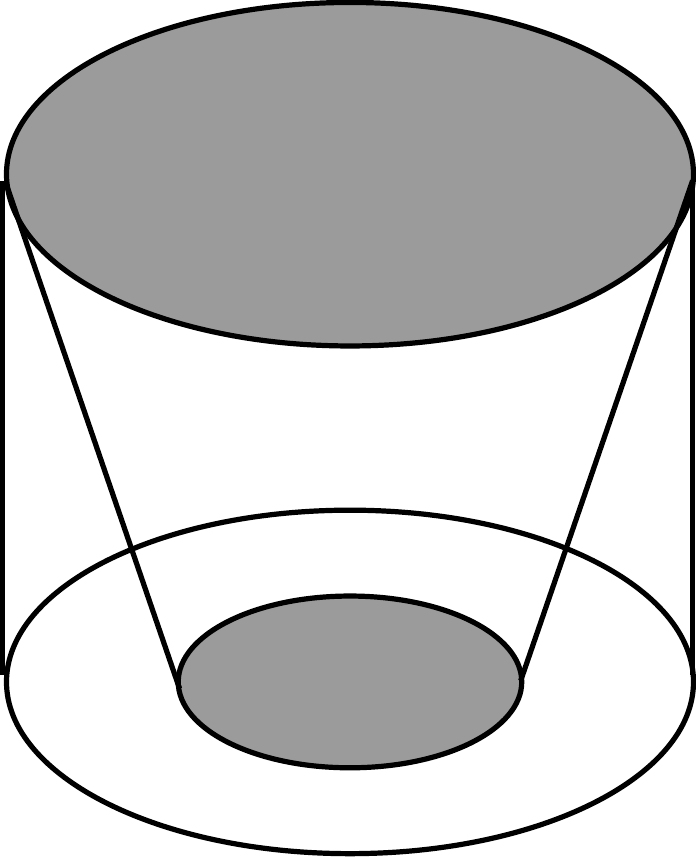}
\end{overpic}
\caption{$\delta$}\label{Fig:Delta}
\end{subfigure}
\caption{Definition of the morphism $\PBr^G\to \Pi E^G_2 $ on $\gamma$ and $\delta$.}
\end{center}
\end{figure}

\item 
The image of $\beta$ is again constructed by lifting a path in $E_2$ sketched in 
Figure~\ref{Fig:Beta}
using Proposition~\ref{Prop: Unique lifts}; for another discussion see the proof of \cite[Proposition 4.2]{extofk}.

\item The path in $E_2(1)$ underlying the image of $\gamma$ is drawn in Figure~\ref{Fig:Gamma}. We equip it with
a representative for the unique homotopy class of maps to $BG$ constructed from the unique homotopy relative boundary of maps to $BG$ between
the composition of the homotopies corresponding to $\widehat{h_1}$ and $\widehat{h_2}$ and the homotopy corresponding to 
$\widehat{h_1h_2}$.

\item The image of $\delta$ is defined by a simple rescaling sketched in Figure \ref{Fig:Delta} equipped with the constant map to $BG$.

\item To define the image of $\varepsilon$ note that if we consider a disk such that any radial path from the origin to the boundary is labeled by $g$,  the path in $BG$ corresponding to the diameter of the disk
is the composition of $\widehat{g}$ with $\widehat{g}^{-1}$ and hence homotopic to the constant map. 
We use such a homotopy to define the image of $\varepsilon$ under $\Phi$.

\end{enumerate}

\begin{theorem}\label{thmmain}
This assignment yields an equivalence 
$\Phi : \PBr^G\to \Pi E_2^G$ 
of operads in groupoids.
\end{theorem}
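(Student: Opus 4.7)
The proof splits naturally into two parts: establishing that the assignment on generators descends to a well-defined morphism $\Phi : \PBr^G \to \Pi E_2^G$ of operads in groupoids, and verifying that this morphism is an equivalence in each color component.

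For the first part, the key tool is the uniqueness of lifts recorded in Remark~\ref{remProp: Unique lifts}: two morphisms in $\Pi E_2^G \binom{\widehat h}{\widehat{\underline g}}$ with the same source, the same target, and the same image in $\Pi E_2(r)$ must coincide. Each relation to be verified therefore reduces to two checks. First, both sides must have matching endpoints in $\Pi E_2^G$; this is built into the definition of the generators in the list preceding the theorem. Second, both sides must project to the same morphism in $\Pi E_2(r)$. The latter is immediate for $(G1)$--$(G8)$ since $\beta, \gamma, \delta, \varepsilon$ project to identities in $\PBr$, and for the pentagon, triangles, hexagons, $(G9)$, and $(G10)$ it reduces to the corresponding relation in $\PBr$, which holds in $\Pi E_2$ by \cite[Proposition~6.2.2]{FresseI}. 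The generators $\ell$ and $r$, whose targets involve touching disks and so do not immediately fall under Proposition~\ref{Prop: Unique lifts}, are handled in two stages: a uniquely liftable portion followed by a radial rescaling carrying a constant $BG$-map.

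For the second part, Proposition~\ref{propoE2Gactiongroupoid} and Proposition~\ref{propequivpbr} both identify the respective colored components with the action groupoid of $B_r$ on $\Sigma_r \times_h G^r$ determined by Lemma~\ref{lemdiagramforw2}. It therefore suffices to verify that $\Phi$ intertwines these identifications. On objects, the standard forms $(\sigma, \underline{b})$ from the proof of Proposition~\ref{propequivpbr} are mapped, by items (2) and (3) of the definition of $\Phi$, to configurations whose holonomies recover $(\sigma, \underline{b})$ under the equivalence \eqref{eqnweakinverse}; on morphisms, $\Phi$ sends the generator $c_{j,j+1}$ to the standard braid, whose induced action on holonomies is exactly the Hurwitz formula~\eqref{eqnactionofbraidgroup}. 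The main obstacle lies in the bookkeeping for relations $(G9)$ and $(G10)$ and for the composition of braidings: tracing endpoints through the lifts requires the twist $(b_j, b_{j+1}) \mapsto (b_j b_{j+1} b_j^{-1}, b_j)$ on group labels to propagate correctly through repeated braidings. Once endpoint-matching is confirmed, Remark~\ref{remProp: Unique lifts} closes the argument, and operadic compatibility of $\Phi$ (partial compositions and symmetric actions) follows from the universal property of the free operad used to present $\PBr^G$.
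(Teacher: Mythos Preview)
Your proof follows the same two-step architecture as the paper's, and the core mechanism in Part~1 (matching endpoints plus matching projections to $\Pi E_2$, then invoking Remark~\ref{remProp: Unique lifts}) is exactly the paper's argument. Two points deserve attention.

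First, in Part~2 you only verify that each component $\Phi : \PBr^G\binom{h}{\underline g} \to \Pi E_2^G\binom{\widehat h}{\widehat{\underline g}}$ is an equivalence. For a map of \emph{colored} operads with different color sets this is not enough: you also need the underlying functor on the categories of unary operations to be an equivalence (in particular, essentially surjective on colors). The paper makes this explicit by observing that on unary operations $\Phi$ recovers the chosen equivalence $\widehat{-}: G//G \xrightarrow{\simeq} \Pi\Map(\mathbb{S}^1,BG)$ from~\eqref{eqnweakinverse}. This step is easy but should not be suppressed.

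Second, the phrase ``$\beta,\gamma,\delta,\varepsilon$ project to identities in $\PBr$'' is imprecise. There is no map $\PBr^G\to\PBr$ in play; what you need is the forgetful map $\Pi E_2^G\to\Pi E_2$, and under it $\Phi(\beta)$, for instance, is a path in $E_2(2)$ between two genuinely different configurations (the multiplication pre- versus post-composed with the rescaling $x\mapsto x/2$), hence not an identity. The correct statement is that these projected paths involve no braiding of the little disks around one another, so in $\Pi E_2(r)$ they are determined by their endpoints; this is what makes the projected versions of $(G1)$--$(G8)$ hold. The paper compresses this into the phrase ``follows by construction'', which is equally terse, but your formulation as written is not quite right.
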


\begin{proof}

\begin{enumerate}
\item To show that $\Phi$ is a map of operads, we need to show that the assignments above are compatible with the relations listed on page~\pageref{listrelationspage}~ff. Verifying a relation amounts to proving that two morphisms  in components of $\Pi E_2^G$ (namely those prescribed by the above assignments) are equal. This can be achieved by observing that they have the same source and target object and that they lift the same morphism in components of $\Pi E_2$. The latter follows by construction and the fact that in the non-equivariant case
$\PBr\longrightarrow \Pi E_2$ is a map of operads. Now we invoke Remark~\ref{remProp: Unique lifts} to get the desired equality of morphisms (note that the uniqueness statement of Remark~\ref{remProp: Unique lifts} can even be used in those cases where it does not grant existence of the lifts).
Hence, we have shown that $\Phi$ descends to a morphism $\Phi:  \PBr^G \to  \Pi E_2^G $ of operads.

\item In the next step, we prove that $\Phi$ is an equivalence. First observe that $\Phi$ induces an equivalence of the categories enriched in groupoids built from $\PBr^G$ and $\Pi E_2^G$ by discarding all non-unary operations. In fact, both these categories have discrete morphism spaces and the functor induced by $\Phi$ is actually the equivalence $\widehat{-} : G//G \xrightarrow{\   \simeq \ } \Pi \Map(\mathbb{S}^1,BG)$
fixed in \eqref{eqnweakinverse}. Therefore, to conclude the proof that $\Phi$ is a equivalence, it suffices to prove that its components
\begin{align}
\Phi :      \PBr^G \binom{h}{\underline{g}}  \to E_2^G \binom{\widehat{h}}{\widehat{\underline{g}}} 
\end{align} 
are equivalences of groupoids. 
This follows from the 2-out-of-3 property because these components fit into the weakly commutative triangle
 \begin{equation}
		\begin{tikzcd}
	 & \Sigma_r \times_h G^r // B_r \ar{dl}{\simeq} \ar{dr}{\simeq}	 \\
	   \PBr^G \binom{h}{\underline{g}} \ar{rr}{\Phi}\ar{ur} & & E_2^G \binom{\widehat{h}}{\widehat{\underline{g}}} \ar{lu}
		\end{tikzcd} ,
		\end{equation}
		where $r=|\underline{g}|$. Here the equivalence $\PBr^G \binom{h}{\underline{g}}\simeq \Sigma_r \times_h G^r // B_r$ comes from Proposition~\ref{propequivpbr} and the equivalence $E_2^G \binom{\widehat{h}}{\widehat{\underline{g}}} \simeq \Sigma_r \times_h G^r // B_r$ from Proposition~\ref{propoE2Gactiongroupoid}.
\end{enumerate}
\end{proof}

An operad valued in a model category is called admissible if its category of algebras inherits a model structure in which equivalences and fibrations are created by the forgetful functor to colored objects. From \cite[Theorem~2.1]{bergermoerdijk} one can deduce that operads valued in $\Cat$
with its canonical model structure are admissible. Hence, via operadic left Kan extension $\Phi$ induces a Quillen adjunction
\begin{flalign}\label{eqnadj}
\xymatrix{
\Phi_! \,:\, \Alg (   \PBr^G   ) ~\ar@<0.5ex>[r] & \ar@<0.5ex>[l]~ \Alg (  \Pi E_2^G  ) \,:\, \Phi^*
}
\end{flalign}
between the categories of algebras over $\PBr^G$ and $\Pi E_2^G $, respectively. 
As a consequence of Theorem 4.11, we arrive at our main result:
\begin{theorem}\label{corlittlebundlesalg}
The operad map $\Phi : \PBr^G\to \Pi E_2^G$  induces a Quillen equivalence
\begin{flalign}\label{quillenequiv}
\xymatrix{
\Phi_! \,:\, \left\{ \text{braided $G$-crossed categories}  \right\} ~\ar@<0.8ex>[r]_-{\sim} & \ar@<0.8ex>[l]~ \left\{ \text{categorical little $G$-bundles algebras}  \right\} \,:\, \Phi^* \ . 
}
\end{flalign}
\end{theorem}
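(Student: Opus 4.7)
The plan is to reduce the statement to Theorem~\ref{thmmain} by invoking a standard transfer theorem for Quillen equivalences of algebras over operads. The essential input is that $\Phi$ is a levelwise weak equivalence of groupoid-valued operads together with suitable cofibrancy conditions guaranteeing that this levelwise equivalence is preserved upon passage to algebras.

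First I would verify that both operads involved are $\Sigma$-cofibrant. For $\Pi E_2^G$, this follows from Remark~\ref{remEnTsigmacof} together with the fact that the passage to the fundamental groupoid commutes with formation of symmetric sequences and preserves $\Sigma$-freeness (the explicit component description $\Sigma_r \times_h G^r // B_r$ in Proposition~\ref{propoE2Gactiongroupoid} makes manifest that $\Sigma_r$ acts freely on objects). For $\PBr^G$ the same follows from the analogous description given in Proposition~\ref{propequivpbr}, or directly by construction, since the generators in~\eqref{Eq: Generatos EG 1} and~\eqref{Eq: Generatos EG 2} carry no symmetric group action and the operadic composition freely generates one.

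Second, I would invoke the Berger--Moerdijk type transfer result: for a morphism $\Phi : \cat{P} \to \cat{Q}$ of admissible $\Sigma$-cofibrant operads in $\Cat$ which is a levelwise weak equivalence, the induced Quillen adjunction $(\Phi_!,\Phi^*)$ between categories of algebras is a Quillen equivalence. Admissibility is already in place by the paragraph preceding the statement (via \cite[Theorem~2.1]{bergermoerdijk}), and the levelwise weak equivalence is precisely Theorem~\ref{thmmain}. The forgetful functor $\Phi^*$ manifestly preserves and reflects fibrations and weak equivalences since these are created colorwise; hence it suffices to check that for every cofibrant $\PBr^G$-algebra $A$ the derived unit $A \to \Phi^*  \Phi_!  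A$ is a weak equivalence in $\Cat$. This reduces, via the standard filtration of relative free algebra extensions by $\Sigma$-coinvariants of operadic operations tensored with cells, to the componentwise equivalence of $\Phi$ applied within $\Sigma_r$-cofibrant diagrams — here $\Sigma$-cofibrancy of both $\PBr^G$ and $\Pi E_2^G$ ensures that these $\Sigma_r$-homotopy orbits compute correctly.

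The main obstacle is the bookkeeping of this transfer argument, in particular checking that the model structure on $\Cat$ is sufficiently well-behaved (it is combinatorial and monoidal with the canonical model structure) so that the filtration argument applies verbatim, and that the $\Sigma_r$-actions involved are free enough for homotopy orbits to be computed by strict orbits. Once these standard verifications are in place, the conclusion is immediate: $\Phi_! \dashv \Phi^*$ is a Quillen equivalence, and identifying the left-hand side with braided $G$-crossed categories via Proposition~\ref{propgcrossedcat} yields the stated equivalence.
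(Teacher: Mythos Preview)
Your proposal is correct and follows essentially the same route as the paper: both arguments reduce to Theorem~\ref{thmmain} via the Berger--Moerdijk transfer theorem \cite[Theorem~4.1]{bergermoerdijk}, after verifying that $\PBr^G$ and $\Pi E_2^G$ are $\Sigma$-cofibrant, and identify the domain with braided $G$-crossed categories via Proposition~\ref{propgcrossedcat}. The only cosmetic differences are that the paper cites the Berger--Moerdijk result as a black box rather than sketching its proof via the filtration argument, and that for $\Sigma$-cofibrancy of $\Pi E_2^G$ the paper argues that $\Pi$ preserves $\Sigma$-cofibrancy of the topological operad $E_2^G$ (Remark~\ref{remEnTsigmacof}) rather than appealing to the explicit groupoid description.
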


\begin{proof} 
Taking into account Proposition~\ref{propgcrossedcat}, we need to show that \eqref{eqnadj} is a Quillen equivalence.
By \cite[Theorem~4.1]{bergermoerdijk} this follows from $\Phi$ being an equivalence (Theorem~\ref{thmmain}) if $\PBr^G$ and $\Pi E_2^G$ are $\Sigma$-cofibrant.

To see the latter, observe $\Pi$ sends $\Sigma$-cofibrant topological operads to $\Sigma$-cofibrant categorical operads. Now $\Pi E_2^G$ is $\Sigma$-cofibrant thanks to Remark~\ref{remEnTsigmacof}. It remains to prove that $\PBr^G$ 
is $\Sigma$-cofibrant, which easily follows from the fact that the permutation action is free. 
\end{proof}

\section{Application to topological field theories}
The little bundles operad describes the genus zero part of surfaces decorated with $G$-bundles,
 hence it is intimately related to equivariant topological field theories
\cite{turaevhqft,htv,hrt} and or rather their homotopical analogues introduced in \cite{MuellerWoikeHH} using a Segal space model for the $(\infty,1)$-category $G\text{-}\Cob(n)$ 
of $G$-cobordisms based on \cite{gtmw,CalaqueScheimbauer}.  

\begin{definition}\label{defZGCob}
For any group $G$ an \emph{$n$-dimensional homotopical equivariant topological field theory with values in a symmetric monoidal $(\infty,1)$-category $\cat{S}$} is a symmetric monoidal $\infty$-functor 
\begin{align}Z: G\text{-}\Cob(n) \to \cat{S}\end{align}
\end{definition}

Informally, the objects  in $G\text{-}\Cob(2)$ are finite disjoint union of circles equipped with maps to $BG$. For two such collections of decorated circles, the morphism space between them is the space of two-dimensional compact oriented bordisms between these two collections of circles equipped with a map to $BG$ extending the ones prescribed on the boundary components.
We refer to \cite{MuellerWoikeHH} for the technical details and an example of such a field theory constructed using an equivariant version of higher derived Hochschild chains.

 We will now make the relation between the little bundles operad and topological field theory precise by proving that the value of a two-dimensional homotopical equivariant topological field theory on the circle is a homotopy little bundles algebra, thereby generalizing \cite[Proposition~6.3]{bzfn} or the earlier version of this result phrased in terms of Gerstenhaber algebras \cite{getzler}.

\begin{theorem}\label{thmequivTFTcircle}
For any homotopical two-dimensional $G$-equivariant topological field theory 
$Z$,
the values of $Z$ on the circle with varying $G$-bundle decoration combine into a homotopy algebra over the little bundles operad $E_2^G$.
\end{theorem}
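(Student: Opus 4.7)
The plan is to upgrade the elementary geometric observation — that for $\langle f,\xi\rangle \in E_2^G \binom{\psi}{\underline\varphi}$ the decorated complement $(\comp(f),\xi)$ is naturally a $BG$-decorated bordism from $\coprod_{j=1}^r (\mathbb{S}^1,\varphi_j)$ to $(\mathbb{S}^1,\psi)$ — to a morphism of $\Map(\mathbb{S}^1,BG)$-colored $\infty$-operads from $E_2^G$ into the endomorphism $\infty$-operad of the family $\{Z(\mathbb{S}^1,\varphi)\}_{\varphi}$ in $\cat{S}$. The key structural point is that everything in the definition of $E_2^G$ was arranged precisely so that operadic composition mirrors the gluing of decorated bordisms along shared boundary circles.

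Concretely, the first step would be to assemble the assignment $\langle f,\xi\rangle \mapsto (\comp(f),\xi)$ into a morphism
\begin{align}
\iota : E_2^G \longrightarrow \End_{G\text{-}\Cob(2)}\bigl(\{(\mathbb{S}^1,\varphi)\}_\varphi\bigr)
\end{align}
of (colored) $\infty$-operads, where the right-hand side is the endomorphism $\infty$-operad formed in the Segal space model of $G\text{-}\Cob(2)$ from \cite{MuellerWoikeHH}. I would verify that $\iota$ is continuous in families (using the description of $W_2^G$ given in Section~\ref{sechomotopyquotient}), that it intertwines the $\Sigma_r$-action by permutation of the ingoing disks, and that it is compatible with operadic composition — the latter being the tautology that inserting the decorated complement of an inner configuration into a hole of the decorated complement of an outer configuration recovers the decorated complement of the composed disk embedding $f\circ\underline g$, exactly as in the definition of the composition of $E_2^G$.

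The second step is to post-compose $\iota$ with the symmetric monoidal $\infty$-functor $Z$, yielding a morphism
\begin{align}
Z \circ \iota : E_2^G \longrightarrow \End_{\cat{S}}\bigl(\{Z(\mathbb{S}^1,\varphi)\}_\varphi\bigr)
\end{align}
of $\infty$-operads. By the standard equivalence between $\infty$-operad maps out of a topological operad and algebras over its Boardman-Vogt resolution, this is precisely a homotopy $E_2^G$-algebra structure on the family $\{Z(\mathbb{S}^1,\varphi)\}_\varphi$, which is the claim.

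The main obstacle is the rigorous construction of $\iota$: while the underlying geometric recipe is transparent, passing from the strict topological operad $E_2^G$ to a morphism into an $\infty$-operad — whose composition is only defined up to coherent homotopy within the Segal space model — requires some bookkeeping to ensure that the continuous families of decorated surfaces assemble coherently. This is, however, a direct equivariant enhancement of the classical non-equivariant statement \cite{getzler,bzfn}: the $BG$-decoration is simply a map that glues along shared boundary circles, so it can be carried through the whole argument without further subtlety, and the absence of strict compatibility (which would require working with the uncolored $E_2$-operad) is precisely accommodated by the Boardman-Vogt resolution.
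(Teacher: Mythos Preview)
Your proposal is correct and follows essentially the same route as the paper: interpret operations of $E_2^G$ as $BG$-decorated bordisms, apply $Z$, and recognize the resulting structure — compatible with composition only up to the coherent homotopies supplied by $Z$ — as an algebra over the Boardman-Vogt resolution. The one technical point the paper makes explicit and you do not is that $(\comp(f),\xi)$ is only a genuine bordism when the little disks of $f$ have pairwise non-intersecting boundaries; the paper handles this by passing to the equivalent suboperad of such configurations (at the cost of strict unitality), which you should incorporate into your construction of $\iota$.
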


A homotopy $E_2^G$-algebra is here to be understood as an algebra over the Boardman-Vogt resolution of $E_2^G$ \cite{bergermoerdijk}. 

\begin{proof}
	
	Any operation in $E_2^G     \binom{\psi}{ \underline{\varphi}}  $ can be seen as bordism $\left( \mathbb{S}^1   \right)^{\sqcup r} \to \mathbb{S}^1$, where $r:= |\underline{\varphi}|$, decorated with a map to $BG$ whose restriction to the ingoing and outgoing boundary is $\underline{\varphi}$ and $\psi$, respectively. Strictly speaking, we can see only those operations as bordisms whose $E_2$-part consists of little disks with non-intersecting boundary. This leads us to considering an equivalent suboperad of $E_2^G$ (which is not strictly unital any more, but only up to homotopy), but we will suppress this in the notation. In summary, we find maps
	\begin{align}
	E_2^G      \binom{\psi}{ \underline{\varphi}}  \to G\text{-}\Cob(2)(   (  (\mathbb{S}^1   )^{\sqcup r} ,  \underline{\varphi}   )  , (\mathbb{S}^1 , \psi)     ) \ ,     \label{homotopye2algebra0}
	\end{align}
	where $G\text{-}\Cob(n)(-,-)$ denotes the morphism spaces of $G\text{-}\Cob(2)$. The operadic composition on the left hand side is mapped to the composition of $G$-bordisms on the right hand side. 
	
	Now by definition a homotopical two-dimensional $G$-equivariant topological field theory gives us a map\begin{align}
	G\text{-}\Cob(2)(   (  (\mathbb{S}^1   )^{\sqcup r} ,  \underline{\varphi}   )  , (\mathbb{S}^1 , \psi)     ) \to [ Z(\mathbb{S}^1,\varphi_1) \otimes \dots \otimes  Z(\mathbb{S}^1,\varphi_r), Z(\mathbb{S}^1,\psi) ] \ ,        \label{homotopye2algebra}
	\end{align} 
	where $[-,- ]$ denotes the mapping space of $\cat{S}$. 
	This map, {by definition}, respects the composition up to coherent homotopy. Concatenating \eqref{homotopye2algebra0} and \eqref{homotopye2algebra} we obtain maps
	\begin{align}
	E_2^G      \binom{\psi}{ \underline{\varphi}} \to [ Z(\mathbb{S}^1,\varphi_1) \otimes \dots \otimes  Z(\mathbb{S}^1,\varphi_r), Z(\mathbb{S}^1,\psi) ] \ . 
	\end{align}
	This would endow the $\Map(\mathbb{S}^1,BG)$-colored object
	$\left(   Z(\mathbb{S}^1,\varphi)        \right)          _{\varphi \in \Map (\mathbb{S}^1,BG)}$ with an $E_2^G$-algebra structure if composition were respected strictly. 
	Instead, it is only respected up to coherent homotopy -- with the coherence data coming from $Z$.
But such a structure of an algebra over an operad respecting the operadic structure only up to coherent homotopy is precisely an (ordinary) algebra over the Boardman-Vogt resolution of that operad \cite{bergermoerdijk}.
This is also made precise in great detail in \cite[Chapter~7]{yauaqft}. 
	\end{proof}

We can use the constructions in this paper also to obtain results about ordinary (non-homotopical) 3-2-1-dimensional topological field theories with non-aspherical target space \cite{turaevhqft,extofk}. Statements about the non-aspherical case are scarce in the literature, and the following Proposition is supposed to indicate that we can make at least a statement about the value of such theories on the circle. As a target category, we choose the symmetric monoidal bicategory $\TwoVect$ of complex 2-vector spaces, see \cite{KV} for a definition.

\begin{proposition}\label{propononasphericaltarget}
	Let $T$ be a space such that $\pi_k(T)=0$ for $k\ge 3$ and $Z : T\text{-}\Cob(3,2,1) \to \TwoVect$ 
	a 3-2-1-dimensional topological field theory with target $T$ valued in the symmetric monoidal bicategory of complex 2-vector spaces.
	Then the operad $E_2^T$ takes values in 2-groupoids and the values of $Z$ on the circle combine into a homotopy $E_2^T$-algebra in 2-vector spaces. 
	\end{proposition}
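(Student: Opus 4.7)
The plan is to imitate the proofs of Proposition~\ref{propolittlebundlesaspherical} (asphericity of $E_2^G$) and Theorem~\ref{thmequivTFTcircle} (homotopy algebra structure in the aspherical setting), with the truncation level raised from $1$ to $2$.

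For the first assertion I would show that every component $E_2^T\binom{\psi}{\underline{\varphi}}$ is $2$-truncated, i.e.\ $\pi_k=0$ for $k\ge 3$. Since $\comp(f)$ is homotopy equivalent to a wedge $\bigvee^r\mathbb{S}^1$, the evaluation fibration with fiber $(\Omega T)^r$ over $T$, together with the vanishing $\pi_k(T)=0$ for $k\ge 3$, yields $\pi_k(\Map(\comp(f),T))=0$ for $k\ge 3$ via its long exact sequence. Feeding this into the long exact sequence of the Serre fibration $p\colon W_2^T(r)\to E_2(r)$ of Proposition~\ref{Lemma: Serre fibration}, whose base is aspherical, shows that $W_2^T(r)$ is $2$-truncated. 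The same argument applied with $\comp(f)$ replaced by $\mathbb{S}^1$ shows that $\Map(\mathbb{S}^1,T)$ is $2$-truncated, and hence so are the homotopy fibers of the Serre fibration $q$ from Proposition~\ref{lemmaqisfibration}; these homotopy fibers are by definition the components of $E_2^T$.

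For the second assertion I would follow the proof of Theorem~\ref{thmequivTFTcircle} almost verbatim: each operation in $E_2^T\binom{\psi}{\underline{\varphi}}$ (after restriction to the equivalent suboperad whose little disks have pairwise disjoint boundaries) determines a pair-of-pants-type bordism $(\mathbb{S}^1)^{\sqcup r}\to\mathbb{S}^1$ equipped with a $T$-decoration extending the boundary data. This furnishes a map of $2$-groupoids from $E_2^T\binom{\psi}{\underline{\varphi}}$ into the morphism $2$-groupoid of $T\text{-}\Cob(3,2,1)$, where the higher structure on the right is provided by (equivalence classes of) decorated $3$-bordisms. Post-composition with $Z$ produces a map into the mapping $2$-category of $\TwoVect$, and bicategorical functoriality of $Z$ guarantees that operadic composition is respected up to coherent $2$-homotopy, which is precisely the data of an algebra over the Boardman--Vogt resolution of $E_2^T$ in the sense of~\cite{bergermoerdijk}.

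The main obstacle I expect is the bookkeeping needed to match the higher structure of $E_2^T$ with the $2$-categorical target. Concretely, one must verify that an isotopy of decorated little disks in $E_2^T$ is realized by a decorated $3$-bordism in $T\text{-}\Cob(3,2,1)$, and that an isotopy of such isotopies corresponds to a diffeomorphism class of such $3$-bordisms; the hypothesis $\pi_k(T)=0$ for $k\ge 3$ (used in Part~1) is exactly what ensures this dictionary terminates at the $2$-categorical level and that no unwanted higher information is discarded. Once this correspondence is made precise the argument is structurally identical to Theorem~\ref{thmequivTFTcircle}, with $\infty$-functoriality replaced by bicategorical functoriality.
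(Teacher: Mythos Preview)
Your treatment of the first assertion is correct and is exactly the ``similar arguments as those for Proposition~\ref{propolittlebundlesaspherical}'' that the paper invokes, only spelled out in more detail.

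For the second assertion you take a genuinely different route from the paper. You propose to work bicategorically throughout: map the $2$-groupoid $E_2^T\binom{\psi}{\underline{\varphi}}$ directly into the hom-structure of $T\text{-}\Cob(3,2,1)$ using decorated $3$-bordisms to realize the higher cells, and then push forward along $Z$. The paper instead sidesteps all of this by \emph{restricting $Z$ to a two-dimensional non-extended $(\infty,1)$-topological field theory} and then applying Theorem~\ref{thmequivTFTcircle} verbatim. In other words, rather than matching the $2$-categorical levels by hand, the paper passes to the underlying $(\infty,1)$-category of the bicategory $\TwoVect$ (and of $T\text{-}\Cob(3,2,1)$), where the earlier result already does the work. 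This buys a one-line proof and completely avoids the bookkeeping you correctly flag as the main obstacle. Your approach is viable in principle, but note a small level mismatch in your sketch: the hom between two objects in a bicategory is a $1$-category, not a $2$-category, so a $2$-cell in $E_2^T$ must land on an \emph{equality} of $2$-morphisms in $T\text{-}\Cob(3,2,1)$ rather than on a further morphism; this is precisely where the hypothesis $\pi_k(T)=0$ for $k\ge 3$ is needed, and once phrased this way your argument and the paper's restriction trick become two descriptions of the same phenomenon.
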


\begin{proof}
	Similar arguments as those for Proposition~\ref{propolittlebundlesaspherical} show that $E_2^T$ takes values in 2-groupoids.
	Now we restrict $Z$ to a two-dimensional non-extended $(\infty,1)$-topological field theory with target $T$ and proceed as in the proof of Theorem~\ref{thmequivTFTcircle}.
\end{proof}

We will, however, not spell out the data of a (homotopy) $E_2^T$-algebra; a presentation of $E_2^T$ in terms of generators and relations is beyond the scope of this article. A first approach to $E_2^T$-algebras might be through the examples that we can produce from a cohomology class in $H^3(T;\U(1))$ using Proposition~\ref{propononasphericaltarget} 
and \cite[Theorem~3.19]{MuellerWoike1}. 

\begin{appendix}
	
	\section{Appendix}

\subsection{Properties of the auxiliary spaces $W_n^T(r)$\label{appwspaces}}
In order to investigate the spaces $W_n^T(r)$, we will need  the following construction for a pair $\langle f,\xi\rangle  \in W_n^T(r)$, i.e.\ for $f \in E_n(r)$ and a map $\xi : \comp(f) \to T$: First note that $\comp(f)$ arises from $\disk^n$ by cutting out $r$ open disks specified by their radii and centers. We now reduce each of these radii by half. Additionally, we double the radius of the outer disk. The resulting manifold with boundary is a `fattening' of $\comp(f)$ and denoted by $\widehat{\comp}(f)$,\label{fatcomp} see Figure \ref{fig: Fattening}. One can use the value of $\xi$ on the boundary of $\comp(f)$ to extend it to a map $\widehat{\xi} : \widehat{\comp}(f) \to T$.  
\begin{figure}[h]
\begin{center}
\includegraphics[scale=0.8]{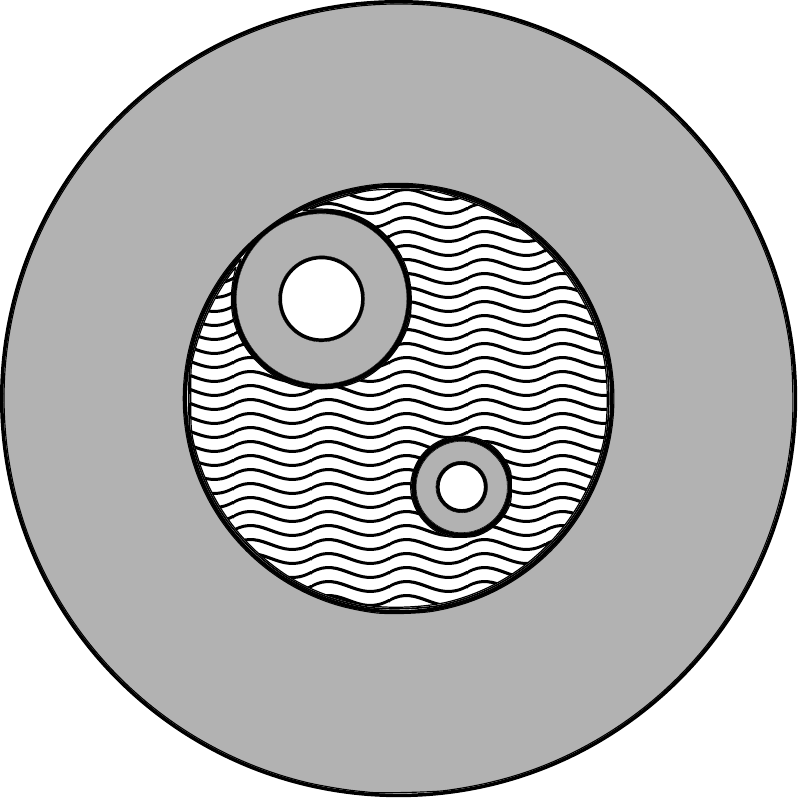}
\caption{The fattening $\widehat{\comp}(f)$ for an element $f$ in $E_2(2)$. The complement $\comp(f)$ of $f$ is the area filled with wavy lines. The space  $\widehat{\comp}(f)$ is the union of the wavy area and the gray areas.}\label{fig: Fattening}
\end{center}
\end{figure}
This extension will be referred to as \emph{radial extension}.\label{refradialext}

We define the subspace 
\begin{align}
\widehat{W}_n(r) := \{ (f,x) \in E_n(r) \times \disk^n_2 \, | \, x \in \widehat{\comp}(f)  \} \subset E_n(r) \times \disk_2^n \ ,
\end{align}
where we denote by $\disk_2^n$ the $n$-dimensional disk of radius $2$.\label{fatW}

\begin{lemma}[Continuity of radial extension]\label{Lemma: Extension}
For a topological space $Y$ let $g : Y \to W^T_n(r)$ be a map of sets satisfying condition \ref{finaltop1} and \ref{finaltop2} on page~\pageref{finaltop1}. 
Then the radial extension 
\begin{align}\label{eqnauxmap0}
\widehat{W}_n(r)\times_{E_n(r)} Y \to T , \quad 
(f,x,y)  \mapsto \widehat{g(y)}(x)
\end{align} 
is continuous.
\end{lemma}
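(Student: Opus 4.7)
The plan is to reduce the statement to hypothesis~\ref{finaltop2} via a fiberwise radial retraction. Concretely, I would first construct a continuous retraction
$\rho : \widehat{W}_n(r) \to W_n(r)$
over $E_n(r)$ such that, for each $f \in E_n(r)$, the restriction $\rho_f : \widehat{\comp}(f) \to \comp(f)$ is the identity on $\comp(f)$ and the radial projection onto the appropriate boundary sphere on each fattening strip. Using the fact that the operation $f \in E_n(r)$ is specified by continuously varying centers $c_k(f)$ and radii $r_k(f)$ for $1\le k\le r$, I would set
\begin{align}
\rho_f(x) = \begin{cases} x & x \in \comp(f),\\ c_k(f) + r_k(f)\dfrac{x - c_k(f)}{|x - c_k(f)|} & r_k(f)/2 \le |x - c_k(f)| \le r_k(f),\\ x/|x| & 1 \le |x| \le 2. \end{cases}
\end{align}
By construction of the radial extension discussed on page~\pageref{refradialext}, one then has $\widehat{g(y)} = g(y) \circ \rho_f$ for every $y\in Y$, so the map~\eqref{eqnauxmap0} factors as
\begin{align}
\widehat{W}_n(r)\times_{E_n(r)} Y \xrightarrow{\ \rho \times_{E_n(r)} \id_Y\ } W_n(r)\times_{E_n(r)} Y \xrightarrow{\ \text{\ref{finaltop2}}\ } T .
\end{align}
The second arrow is continuous by hypothesis~\ref{finaltop2}, and the first is continuous once $\rho$ is.

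Next I would verify the joint continuity of $\rho$. The three closed subsets of $\widehat{W}_n(r)$ on which the pieces of the above formula are defined cover $\widehat{W}_n(r)$, and on their pairwise intersections (the boundary spheres $|x-c_k(f)|=r_k(f)$ and $|x|=1$) both formulas evaluate to $x$, so by the closed-cover gluing lemma it suffices to check continuity of each piece separately. Each piece is manifestly continuous on its domain provided the maps $f \mapsto c_k(f)$ and $f \mapsto r_k(f)$ are continuous, which is built into the topology on $E_n(r)$; moreover the closed subsets themselves vary continuously in $f$, so that a neighbourhood of a point in one piece is mapped into a neighbourhood of the corresponding piece in $W_n(r)$.

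The main obstacle will be handling the seams where two of the three regions meet: at such a point the formula from each side specializes to $x$, but one must confirm that an arbitrary sequence $(f_m, x_m) \to (f_0, x_0)$ in $\widehat{W}_n(r)$ — which may alternate between the pieces — still has $\rho_{f_m}(x_m) \to \rho_{f_0}(x_0)$. This reduces to the uniform continuity statement that, as $(f,x)$ approaches a point of $\partial\comp(f_0)$, each of the radial-projection formulas tends to the common value $x$. Once this is verified the proof is complete: the composite above is continuous, which is exactly the assertion of the lemma.
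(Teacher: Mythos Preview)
Your proposal is correct and takes a genuinely different route from the paper. The paper argues directly with open sets: given $U\subset T$ open and a point $(f_0,x_0,y_0)$ in its preimage, it produces a neighbourhood case by case according to whether $x_0$ lies in the interior of $\comp(f_0)$, on $\partial\comp(f_0)$, or in the fattening strip, each time reducing to hypothesis~\ref{finaltop2} by hand. Your factorization $\widehat{g(y)}=g(y)\circ\rho_f$ through a global retraction $\rho:\widehat W_n(r)\to W_n(r)$ over $E_n(r)$ is cleaner: once $\rho$ is continuous, the result is immediate from~\ref{finaltop2}. The pasting-lemma check you outline goes through, since the pieces $W_n(r)$, the $r$ inner annuli $\{(f,x):r_k(f)/2\le |x-c_k(f)|\le r_k(f)\}$, and the outer annulus $\{(f,x):1\le|x|\le 2\}$ are closed in $\widehat W_n(r)$ (the centres and radii are continuous in $f$), each piecewise formula is visibly continuous on its domain, and on every overlap each formula returns $x$. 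Your last paragraph about sequences alternating between pieces is not needed---the pasting lemma already absorbs that---but it is not incorrect. In short, the paper's argument is more hands-on with three separate neighbourhood constructions; yours isolates the geometric content in the single retraction $\rho$ and then invokes~\ref{finaltop2} once.
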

\begin{proof}
Let $U\subset T $ be 
open and let $(f_0,x_0,y_0)$ be in its preimage under \eqref{eqnauxmap0}. We need to exhibit a neighborhood of  $(f_0,x_0,y_0)$ in $\widehat{W}_n(r)\times_{E_n(r)} Y$ whose image under \eqref{eqnauxmap0} is contained in $U$.

If $x_0 \in \overset{\circ}{\comp(f_0)}$, then $(f_0,x_0,y_0)$ is contained in the subspace ${W}_n(r)\times_{E_n(r)} Y \subset \widehat{W}_n(r)\times_{E_n(r)} Y$. By assumption, the map $W_n(r)\times_{E_n(r)}Y  \to  T $ is continuous, hence we find open neighborhoods $V$ of $f_0$ in $E_n(r)$, $V''$ of $y_0$ in $Y$ and, additionally, for some $\varepsilon>0$ an open ball \label{openballeps}$\ball_\varepsilon(x_0)$ of radius $\varepsilon>0$ around $x_0$ such that the image of the open neighborhood
$
(V' \times \ball_\varepsilon(x_0) \times V'') \cap W_n(r)\times_{E_n(r)}Y
$ of $(f_0,x_0,y_0)$ under $W_n(r)\times_{E_n(r)}Y  \to  T $ is contained in $U$. But since
\begin{align}
(V' \times \ball_\varepsilon(x_0) \times V'') \cap W_n(r)\times_{E_n(r)}Y = (V' \times \ball_\varepsilon(x_0) \times V'') \cap \widehat{W}_n(r)\times_{E_n(r)}Y
\end{align} the set $(V' \times \ball_\varepsilon(x_0) \times V'') \cap W_n(r)\times_{E_n(r)}Y$ is also an open neighborhood of $(f_0,x_0,y_0)$ in $\widehat{W}_n(r)\times_{E_n(r)}Y$ being mapped to $U$ under \eqref{eqnauxmap0}.

If $x_0\in \partial \comp(f)$, then, as in the case $x_0 \in \overset{\circ}{\comp(f_0)}$, we find suitable neighborhoods $V', V''$ and  $\ball_\varepsilon(x_0)$ such that
$(V' \times \ball_\varepsilon(x_0) \times V'') \cap W_n(r)\times_{E_n(r)}Y$ is mapped to $U$ (again by continuity of $W_n(r)\times_{E_n(r)}Y  \to  T $). By construction of the radial extension, $(V' \times \ball_\varepsilon(x_0) \times V'') \cap \widehat{W}_n(r)\times_{E_n(r)}Y$ is also mapped to $U$, which gives us the desired neighborhood is this case.

Now assume that $x_0$ is not in $\comp(f_0)$. Then the corresponding point 
on the boundary of $\comp(f)$ obtained by following a straight line in 
radial direction is also in the preimage of $U$, again by construction of the radial extension. For this point, there exists the desired open neighborhood 
as argued above. We can translate this neighborhood to $x_0$,
 rescale it to get the desired neighborhood for $x_0$
and proceed as above.   
\end{proof}

\begin{lemma}\label{lemma1}
The evaluation map $\ev : W_n(r)\times_{E_n(r)} W_n^T(r) \to T$ is continuous. 
\end{lemma}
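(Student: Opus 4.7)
My plan is to show that for every open $U \subset T$ the preimage $\ev^{-1}(U)$ is open in $W_n(r) \times_{E_n(r)} W_n^T(r)$ (equipped with the subspace topology from the product). Since $W_n^T(r)$ carries the final topology, the nontrivial step will be to produce an open neighborhood in $W_n^T(r)$ explicitly and then verify openness by testing against every $g: Y \to W_n^T(r)$ in the defining family. The key tool will be the continuity of the radial extension, Lemma~\ref{Lemma: Extension}, combined with a tube-lemma argument to handle the universal quantification over a small compact neighborhood of the evaluation point.

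Given $((f_0,x_0),(f_0,\xi_0)) \in \ev^{-1}(U)$, the radial extension $\widehat\xi_0 : \widehat\comp(f_0) \to T$ is continuous and satisfies $\widehat\xi_0(x_0) = \xi_0(x_0) \in U$. Hence I can choose $\varepsilon > 0$ so that the compact set $K := \overline{\ball_\varepsilon(x_0)}$ satisfies $K \subset \widehat\comp(f_0)$ and $\widehat\xi_0(K) \subset U$, and then pick an open neighborhood $V_1$ of $f_0$ in $E_n(r)$ small enough that $K \subset \widehat\comp(f)$ for every $f \in V_1$ (possible because the fattening provides a positive-radius collar around $\partial\comp(f_0)$). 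I then define
\begin{align}
W := \bigl\{(f,\xi) \in W_n^T(r) \,\big|\, f \in V_1 \text{ and } \widehat\xi(K) \subset U\bigr\}.
\end{align}

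To see that $W$ is open in $W_n^T(r)$, fix an arbitrary $g: Y \to W_n^T(r)$ in the family and check that $g^{-1}(W)$ is open in $Y$. By Lemma~\ref{Lemma: Extension} the map $F: \widehat W_n(r) \times_{E_n(r)} Y \to T$, $(f,x,y) \mapsto \widehat{g(y)}(x)$, is continuous. On the open set $Y_0 := (p \circ g)^{-1}(V_1) \subset Y$ (open by condition (I)), the map $Y_0 \times K \to \widehat W_n(r) \times_{E_n(r)} Y$, $(y,x) \mapsto (p(g(y)),x,y)$, is well-defined and continuous, so the composition $\widetilde F: Y_0 \times K \to T$ is continuous. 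Compactness of $K$ and the tube lemma applied to the open set $\widetilde F^{-1}(U)$ yield openness of $\{y \in Y_0 : \{y\} \times K \subset \widetilde F^{-1}(U)\} = g^{-1}(W)$ in $Y_0$, and hence in $Y$.

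Finally, the set $((V_1 \times \ball_\varepsilon(x_0)) \cap W_n(r)) \times W$ is an open subset of $W_n(r) \times W_n^T(r)$, whose intersection with the fiber product is an open neighborhood of $((f_0,x_0),(f_0,\xi_0))$; for any point $((f,x),(f,\xi))$ in it, $x \in \ball_\varepsilon(x_0) \subset K$ and $x \in \comp(f) \subset \widehat\comp(f)$, so $\ev((f,x),(f,\xi)) = \xi(x) = \widehat\xi(x) \in \widehat\xi(K) \subset U$. The main obstacle is the universal quantifier "for all $x \in K$" in the definition of $W$, which forces $K$ to be chosen compact (not merely open) and requires the tube lemma to convert pointwise continuity into openness of $g^{-1}(W)$. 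The passage through the radial extension is essential because, for $f \neq f_0$, some points of $K$ may lie outside $\comp(f)$, so Lemma~\ref{Lemma: Extension} is needed to evaluate the relevant data on the fattening in a uniformly continuous way.
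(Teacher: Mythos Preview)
Your proof is correct and follows essentially the same route as the paper: both define a candidate open neighborhood $W\subset W_n^T(r)$ by requiring the radial extension $\widehat\xi$ to map a small compact ball $K=\overline{\ball_\varepsilon(x_0)}$ into $U$, and both verify openness of $W$ by testing against an arbitrary $g:Y\to W_n^T(r)$, invoking Lemma~\ref{Lemma: Extension} and the compactness of $K$. The only cosmetic difference is that you phrase the compactness step via the tube lemma, whereas the paper passes through the adjunction $Y'\to\Map(K,T)$ and the sub-basic open $M(K,U)$ of the compact-open topology; these are equivalent formulations of the same argument.
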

\begin{proof}
Let $U\subset T$ be open and $(f_0,x_0 ,\xi_0)\in \ev^{-1}(U)$. We need to show that there exist an open
neighborhood of $(f_0,x_0 ,\xi_0)$  
in $W_n(r)\times_{E_n(r)} W_n^T(r)$
such that its image 
under the evaluation
is contained in
$U$. 
	
In a first step, consider the radial extension $\widehat{\xi}_0 : \widehat{\comp}(f_0) \to T$
of $\xi_0$ to the fattening of $\comp(f)$ as discussed above on page~\pageref{refradialext}.
By continuity of $\widehat{\xi}_0$ there is an $\varepsilon > 0$ such that 
 \begin{enumerate}[label=(\alph*)]
	\item 
$\ball_\varepsilon(x_0) \subset  \widehat{\comp}(f_0)  $, 

\item $\overline{\ball_\varepsilon(x_0)} \cap \partial \widehat{\comp}(f_0) = \emptyset$

\item and $\widehat{\xi}_0\left(\overline{\ball_\varepsilon(x_0)}\right)\subset U$. 
	
	\end{enumerate}
Now define the subset $U_W \subset W_n^T(r)$ of those $(f,\xi)\in W_n^T(r)$
	for which
\begin{enumerate}[label=(\alph*$'$)]
	\item $\ball_\varepsilon(x_0)\subset \widehat{\comp}(f)$, \label{condUWa}
\item $\overline{\ball_\varepsilon(x_0)} \cap \partial \widehat{\comp}(f) = \emptyset$ \label{condUWb}
\item and
$\widehat{\xi}(\overline{\ball_\varepsilon(x_0)})\subset U$. \label{condUWc}
	\end{enumerate}
Next recall that $W_n(r)\times_{E_n(r)} W_n^T(r)$ is a subspace of $  E_n(r) \times \disk_2^n \times W_n^T(r)$, where $\disk_2^n$ is the closed $n$-disk of radius 2.
The intersection of $  E_n(r) \times \ball_\varepsilon(x_0) \times U_W \subset E_n(r) \times \disk_2^n \times W_n^T(r)$ 
with $W_n(r)\times_{E_n(r)} W_n^T(r)$ contains $(f_0,x_0,\xi_0)$ and is mapped to $U$ under the evaluation. 
Hence, it remains to show that $ E_n(r) \times \ball_\varepsilon(x_0) \times U_W$ is open in $W_n(r)\times \disk_2^n\times W_n^T(r)$.
For this, it suffices to prove that $U_W$ is open in $W_n^T(r)$.

By definition of the final topology,
 a subset $V\subset W_n^T(r)$ 	is open if and only if for all maps 
	$g : Y \to W_n^T(r)$ of sets from a topological space $Y$ satisfying the conditions \ref{finaltop1} and \ref{finaltop2} 
	on page~\pageref{finaltop1} 
	the 
	preimage $g^{-1}(V)$ of $V$ is open.  	
	For the proof that $U_W$ meets this requirement,
	let $g : Y \to W_n^T(r)$ be a map 
	satisfying conditions \ref{finaltop1} and \ref{finaltop2}.
First we remark that it follows from conditions~\ref{condUWa} and \ref{condUWb} above that the image $p(U_W)$ of $U_W$ under the projection $p: W_n^T(r) \to E_n(r)$ is open in $E_n(r)$.
Hence, by \ref{finaltop1} 
the set $Y' :=  (p\circ g)^{-1}(p(U_W))\subset Y$ is also open. This implies that a subset 
of $Y'$ is open if and only if it is open seen as a subset of $Y$.
By \ref{finaltop2} the map $W_n(r)\times_{E_n(r)}Y \to T$ is 
continuous.
For this we conclude that its restriction $W_n(r)\times_{E_n(r)}Y' \to T$ is 
continuous  as well. 
It naturally gives rise to a map  
\begin{align}
\widehat{W}_n(r) \times_{E_n(r)} Y' &\to T \label{eqnauxmap} , \quad
(f,x,y) \mapsto \widehat{g(y)}(x)
\end{align} 
which is continuous by Lemma \ref{Lemma: Extension}.

There is a natural embedding 
\begin{align}
\overline{\ball_\varepsilon(x_0)} \times Y' \to \widehat W_n(r) \times_{E_n(r)}Y' , \quad (x,y)  \mapsto  (p\circ g (y), x,y)
\end{align}  
which is continuous by the universal property of the subspace and 
product topology. 
This implies that the composition
\begin{align}
 \overline{\ball_\varepsilon(x_0 )} \times Y' &\to T \\
(x,y) &\longmapsto \widehat{g(y)}(x)
\end{align}
is continuous
and hence, by adjunction, gives rise to a continuous map 
\begin{align} 
Y' \to \Map ( \overline{\ball_\varepsilon(x_0)}, T) \ . \label{prooflemma1eqn}
\end{align} 
	By definition the subspace $M(\overline{\ball_\varepsilon(x_0)},U)\subset \Map ( \overline{\ball_\varepsilon(x_0)}, T)$
	of all maps $\overline{\ball_\varepsilon(x_0)} \to T$ sending $\overline{\ball_\varepsilon(x_0)}$ to $U$  
	is open in the compact-open 
	topology. 
	This implies that the preimage of 
	$M(\overline{\ball_\varepsilon(x_0)},U)$ under \eqref{prooflemma1eqn} 
	is open. 
But this preimage is just $g^{-1}(U_W) $ showing that $g^{-1}(U_W)$ is open and finishing the proof.    
\end{proof}

	\begin{lemma}\label{lemmafibercompactopen}
		For $f\in E_n(r)$ denote by $p^{-1}(f)$ the fiber of $p:W_n^T(r) \to E_n(r)$ over $f$ endowed with the subspace topology induced from $W_n^T(r)$.
		Then the evaluation $\comp(f) \times p^{-1}(f) \to T$ is continuous and the topology on $p^{-1}(f)$ agrees with the compact-open topology, i.e.\ 
				$p^{-1}(f) = \Map(  \comp(f), T)$. 
	\end{lemma}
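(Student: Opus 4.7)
My plan is to verify the two claims in order, using Lemma \ref{lemma1} as the main input for continuity of the evaluation and the universal property of the final topology on $W_n^T(r)$ for identifying the topology on the fiber.

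For the first claim, I would observe that $\comp(f)\times p^{-1}(f)$ sits inside $W_n(r)\times_{E_n(r)}W_n^T(r)$ as the subspace where both factors project to $f\in E_n(r)$: a point of $p^{-1}(f)$ is a pair $(f,\xi)$, and requiring the first factor $(f',x)\in W_n(r)$ to pair with it over $E_n(r)$ forces $f'=f$, so the pullback restricts to $\{f\}\times\comp(f)\times p^{-1}(f)\cong\comp(f)\times p^{-1}(f)$. The evaluation on this subspace agrees with the restriction of the continuous evaluation from Lemma \ref{lemma1}, hence is continuous.

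For the second claim, the argument is two inclusions of topologies. For subspace $\subseteq$ compact-open, I would show that the tautological inclusion $\iota:\Map(\comp(f),T)\to W_n^T(r)$, sending $\varphi\mapsto(f,\varphi)$, is continuous when the source carries the compact-open topology. By the universal property defining the final topology on $W_n^T(r)$, it suffices to check conditions \ref{finaltop1} and \ref{finaltop2}: $p\circ\iota$ is constant at $f$ hence continuous, and the pullback $W_n(r)\times_{E_n(r)}\Map(\comp(f),T)\to T$ reduces to the standard evaluation $\comp(f)\times\Map(\comp(f),T)\to T$, which is continuous since $\comp(f)$ is compact Hausdorff. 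Hence $\iota$ is continuous, so every set open in $p^{-1}(f)$ (with the subspace topology) pulls back to an open set in the compact-open topology.

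For the reverse inclusion compact-open $\subseteq$ subspace, I would apply the exponential adjunction. By the first claim, the evaluation $\comp(f)\times p^{-1}(f)\to T$ is continuous, and since $\comp(f)$ is compact Hausdorff (in particular locally compact Hausdorff), the adjoint map $p^{-1}(f)\to\Map(\comp(f),T)$ — which is just the identity on underlying sets — is continuous, giving the desired inclusion of open sets. The only mildly subtle point in the whole argument is making sure that one is allowed to apply the adjunction in the expected direction, which works because $\comp(f)$ is a compact Hausdorff space; the point-set topology aside, nothing deep is going on, and the whole statement is really just an unpacking of the definition of the final topology on $W_n^T(r)$ restricted to a single fiber.
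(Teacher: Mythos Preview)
Your proof is correct and follows essentially the same approach as the paper: both arguments use Lemma~\ref{lemma1} to get continuity of the evaluation by restriction to the fiber over $f$, then verify conditions~\ref{finaltop1} and \ref{finaltop2} to show the inclusion $\Map(\comp(f),T)\to W_n^T(r)$ is continuous, and finally use the exponential adjunction (available since $\comp(f)$ is locally compact) for the reverse direction.
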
 
	
	\begin{proof}
		The map $\comp(f) \times p^{-1}(f) \to T$ is the restriction of the map $\ev : W_n(r)\times_{E_n(r)} W_n^T(r) \to T$ from Lemma~\ref{lemma1} to the fiber of $W_n(r)\times_{E_n(r)} W_n^T(r) \to E_n(r)$ over $f$ and hence continuous.
		
		Moreover, $p^{-1}(f) = \Map(  \comp(f), T)$ as sets, so it remains to show that the identity map is continuous in both directions.
		
		The identity is continuous as a map $\Map(  \comp(f), T) \to p^{-1}(f)$: For this it suffices to show that the composition $\Map(  \comp(f), T) \to W_n^T(r)$ 
		with the inclusion $p^{-1}(f) \to W_n^T(r)$ is continuous. The composition of $\Map(  \comp(f), T) \to W_n^T(r) \to E_n(r)$ factors through $\{f\} \to E_n(r)$ and is therefore continuous, so condition~\ref{finaltop1} is fulfilled. For condition~\ref{finaltop2} to be fulfilled, we need  the evaluation map
		$\comp(f) \times \Map(  \comp(f), T)  \to  T$ to be continuous. But this is the case because $\comp(f)$ is locally compact.
		
		The identity is continuous as a map $p^{-1}(f) \to \Map(  \comp(f), T)$: By adjunction (and since $\comp(f)$ is locally compact), continuity of $p^{-1}(f) \to \Map(  \comp(f), T)$ is equivalent to continuity of $\comp(f) \times p^{-1}(f) \to T$, which we have already established.
	\end{proof}

\begin{proposition}\label{appLemma: Serre fibration}
The map $p : W_n^T(r) \to E_n(r) $ is a Serre fibration. 
\end{proposition}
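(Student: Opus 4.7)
My plan is to verify the right lifting property of $p$ against the inclusions $D^k \times \{0\} \hookrightarrow D^k \times I$. The first step is to repackage the lifting problem using the universal property of the final topology on $W_n^T(r)$: by conditions \ref{finaltop1} and \ref{finaltop2}, a continuous map $D^k \times I \to W_n^T(r)$ lying over a given $H : D^k \times I \to E_n(r)$ is the same data as a continuous map $\Xi : \cat{C}_H \to T$, where
$\cat{C}_H := W_n(r) \times_{E_n(r)} (D^k \times I) = \{ (s, t, x) : x \in \comp(H(s,t)) \}$;
and similarly $\tilde H_0$ corresponds to a continuous $\Xi_0 : \cat{C}_H|_{t = 0} \to T$. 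The lifting problem thereby reduces to extending $\Xi_0$ continuously to $\Xi$.

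To produce the extension, I would trivialize the family of complements $\cat{C}_H \to D^k \times I$ by an ambient isotopy. Each $H(s, t) \in E_n(r)$ is affine, so determined by continuously varying centers $c_i(s,t)$ and radii $\rho_i(s, t)$ of its $r$ little disks. Using this continuity together with compactness of $D^k \times I$, I would construct a continuous family of self-homeomorphisms $\phi_{s, t} : \disk^n \to \disk^n$ with $\phi_{s, 0} = \id$ and $\phi_{s, t}(\comp(H(s, 0))) = \comp(H(s, t))$ as follows: on an annular collar around each $i$-th disk, interpolate by means of a radial bump function between the unique affine map taking the $i$-th disk at time $0$ to the $i$-th disk at time $t$ (on the inner boundary) and the identity (on the outer boundary); outside these collars set $\phi_{s, t} = \id$. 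If distinct disks would collide or leave $\disk^n$ during the isotopy, subdivide $[0, 1]$ into short enough subintervals $[t_i, t_{i+1}]$ that the configurations at consecutive endpoints admit such an interpolation, and concatenate the pieces.

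With $\phi$ in hand, the extension is defined by $\Xi(s, t, x) := \Xi_0(s, \phi_{s, t}^{-1}(x))$ for $(s, t, x) \in \cat{C}_H$. It agrees with $\Xi_0$ at $t = 0$, is continuous because $\Xi_0$ and $\phi^{-1}$ both are, and translates back through the final-topology correspondence of the first paragraph to the desired Serre lift $\tilde H : D^k \times I \to W_n^T(r)$.

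The main obstacle is the construction of $\phi$. Carrying out the parametrized gluing between the affine maps on each disk collar and the identity outside simultaneously in $(s, t)$, while ensuring each $\phi_{s, t}$ is a genuine homeomorphism of $\disk^n$ and depends continuously on its parameters, is essentially a parametrized isotopy-extension argument and is the most technical point of the proof. Everything else is either immediate from continuity or follows from the general properties of the final topology already established in the appendix.
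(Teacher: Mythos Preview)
Your reduction in the first paragraph is correct and matches the paper's own setup: via conditions~\ref{finaltop1} and~\ref{finaltop2}, the lifting problem is equivalent to extending $\Xi_0$ from $\cat{C}_H|_{t=0}$ to all of $\cat{C}_H$.

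The ambient-isotopy step, however, has a genuine gap. In $E_n(r)$ only the \emph{interiors} of the embedded disks are required to be disjoint; the closed disks may be tangent to one another or to $\partial\disk^n$. If at $(s,0)$ two little disks touch and at $(s,t)$ for $t>0$ they have separated (or vice versa), then $\comp(H(s,0))$ and $\comp(H(s,t))$ are not homeomorphic: one has two inner boundary spheres wedged at a point, the other has them disjoint. Consequently no self-homeomorphism $\phi_{s,t}$ of $\disk^n$ can carry $\comp(H(s,0))$ onto $\comp(H(s,t))$, and your annular-collar interpolation has no room to operate at such configurations. Subdividing $[0,1]$ does not help, since the obstruction already sits at the endpoint $t=0$.

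The paper sidesteps this by combining two ingredients you do not use. First, it invokes that being a Serre fibration is a local property on the base, so it suffices to lift over a small neighborhood $U_\varepsilon$ of a fixed $f_0$. Second, over $U_\varepsilon$ it never tries to identify the varying complements with each other; instead it fixes a single common superset $\comp(\varepsilon,f_0)=\disk^n\setminus\bigcup_i\ball_\varepsilon(c_i)$ containing $\comp(f)$ for every $f\in U_\varepsilon$, \emph{radially extends} each $K''(x):\comp(K'(x))\to T$ constantly toward the disk centers to a map $\widetilde K''(x):\comp(\varepsilon,f_0)\to T$, and then simply restricts to $\comp(L(x,t))$ for each $t$. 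Only the inclusion $\comp(L(x,t))\subset\comp(\varepsilon,f_0)$ is needed, and that is insensitive to whether disks touch. If you want to repair your argument, replace the isotopy by this extend-then-restrict mechanism.
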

\begin{proof}
Let us fix $f_0 \in E_n(r)$ and an $\varepsilon>0$ smaller than the radii of all 
disks in the image of $f_0$. Let $X:=  \bigcup_{i=1}^r  \ball_\varepsilon(c_i) $ be the 
union of $\varepsilon$-balls around the centers $c_1,\dots,c_r$ of $f_0$.
We define an open neighborhood $U_\varepsilon$ of $f_0$ in $E_n(r)$ consisting of those $f \in E_n(r)$  satisfying the following requirements (illustrated in Figure~\ref{fig: different center}): 
\begin{itemize}
\item $X \cap \comp(f)=\emptyset$. 

\item The center of any disk belonging to $f$ is contained in $X$ (by the first requirement each center of $f$ is contained in $\ball_\varepsilon(c_i)$ for a unique $i$).
\end{itemize}

\begin{figure}[h]
	\begin{center}
		\includegraphics[scale=2]{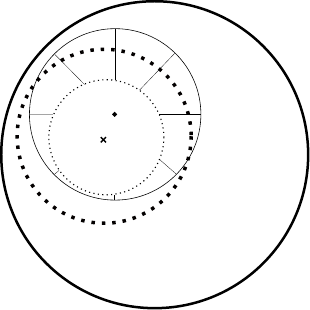}
	\end{center}
	\caption{For the proof of Proposition~\ref{appLemma: Serre fibration}. The large dashed circle corresponds to $f_0$, the solid circle to $f$ and the small dashed circle to a circle of radius $\varepsilon$ around the center of $f_0$ represented by a cross. The solid lines indicate the direction of the radial extension.}\label{fig: different center}
\end{figure}
We set $\comp(\varepsilon,f_0):=\disk^n\setminus X$.
Since being a Serre fibration is a local property \cite[Theorem~VII.6.11]{bredon}, it suffices to prove that for $m\ge 0$ the lifting problem
  \begin{equation}
\begin{tikzcd}
\disk^m \times 0  \ar{rr}{K} \ar[swap]{dd}{} & & p^{-1} (U_\varepsilon) \ar{dd}{p} \\
  	& & \\
 \disk^m \times I  \ar{rr}{L} \ar[dashrightarrow]{rruu}{\widetilde L} & & U_\varepsilon
\end{tikzcd} 
\end{equation}
can be solved.  
For every $x\in \disk^m$ 
we write $K(x) = (K'(x),K''(x))$, where $K'(x) \in E_n(r)$ and $K''(x) : \comp(K'(x)) \to T$.
We can continuously extend $K''(x)$  radially in the direction of the centers of $K'(x)$ to a map $\widetilde K''(x) :  \comp(\varepsilon,f_0)\to T$ (constantly along the radial lines in Figure~\ref{fig: different center}).
Since $\comp(L(x,t)) \subset \comp(\varepsilon,f_0)$,
 we can set $\widetilde{L}(x,t):= (L(x,t),\widetilde K''(x)|_{\comp(L(x,t))} )$ for $x \in \disk^m$ and $t\in I$.
This is obviously a $p$-lift of $L$ as a map of sets. 
It remains to show that $\widetilde{L}$ is continuous: Indeed, condition~\ref{finaltop1} is satisfied by definition. 
For \ref{finaltop2} we investigate the map \begin{align} W_n(r) \times_{E_n(r)} (\disk^m\times I) \to T \label{eqncriticalmap2} \end{align} and realize that its domain is a subspace
\begin{align} W_n(r) \times_{E_n(r)} (\disk^m\times I) \subset  U_\varepsilon \times \comp(\varepsilon,f_0) \times \disk^m\times I \end{align} and that \eqref{eqncriticalmap2} is the restriction of
\begin{align}
  U_\varepsilon \times \comp(\varepsilon,f_0) \times \disk^m\times I \xrightarrow{\ \operatorname{pr}\ }   U_\varepsilon \times \comp(\varepsilon,f_0) \times \disk^m \xrightarrow{\ F\ } T \ ,
\end{align}
where 
\begin{align}
F:  U_\varepsilon \times \comp(\varepsilon,f_0) \times \disk^m &\to T \label{eqnFmap} \\
(f,x,x')& \mapsto (\widetilde{K}''(x'))(x)
\end{align}
Hence, it suffices to prove that $F$ is continuous.
This follows from a slight modification of the proof of Lemma~\ref{Lemma: Extension} showing that
the radial extension used here is continuous as well. 
\end{proof}

We also need to prove the following statement about the map $q$ from \eqref{therestrictionqeqn}: 

\begin{proposition}\label{applemmaqisfibration}
  	The map $q: W_n^T(r) \to \prod^{r+1} \Map (\mathbb{S}^{n-1},T)$ 
  	obtained by restriction to the boundary 
  	is a Serre fibration.
  	\end{proposition}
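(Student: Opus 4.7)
The plan is to establish homotopy lifting against $\disk^m \times I$ by a collar-interpolation construction, adapting the strategy sketched in the proof of Proposition~\ref{Prop: Unique lifts} and the local-fibration argument of Proposition~\ref{appLemma: Serre fibration}. Since being a Serre fibration is a local property, I only need to verify lifting over a small open neighborhood $U_\varepsilon \subset E_n(r)$ of any fixed $f_0$, constructed exactly as in the proof of Proposition~\ref{appLemma: Serre fibration}.

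For initial data $K \colon \disk^m \to W_n^T(r)$, $K(x) = (f(x), \xi_x)$, and extension $L \colon \disk^m \times I \to \prod^{r+1}\Map(\mathbb{S}^{n-1}, T)$ of $q \circ K$, I would look for a lift whose $E_n(r)$-component stays constantly equal to $f(x)$ in $t$ (this is legal since $q$ imposes no constraint on the $E_n(r)$-component). The task then reduces to producing a continuous family $\xi_{x, t} \colon \comp(f(x)) \to T$ with $\xi_{x, 0} = \xi_x$ and with boundary restriction $L(x, t)$.

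I would then pick, uniformly in $f \in U_\varepsilon$, a family of pairwise disjoint collars $c_{f, j} \colon \mathbb{S}^{n-1} \times [0, \delta] \hookrightarrow \comp(f)$ of the $r+1$ boundary spheres depending continuously on $f$; this exists by a radial prescription for sufficiently small $\delta$ because the affine little disks stay uniformly apart on $U_\varepsilon$. The lift is then produced by the standard collar interpolation: on each collar set $\xi_{x, t}(c_{f(x), j}(y, s))$ to equal $L_j(x, t - s/\delta)(y)$ for $s \le t\delta$ and $\xi_x(c_{f(x), j}(y, s - t\delta))$ for $s \ge t\delta$, and leave $\xi_{x, t} = \xi_x$ outside every collar. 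Evaluating at $s = 0$ gives $L_j(x, t)(y)$ as required; at $t = 0$ this reduces to $\xi_x$; and the two cases match along $s = t\delta$ because $\xi_x|_\partial = L(x, 0)$.

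The final step is to check that the resulting $\widetilde L \colon \disk^m \times I \to W_n^T(r)$ is continuous, for which I would invoke the universal property of the final topology defining $W_n^T(r)$ and verify conditions~\ref{finaltop1} and~\ref{finaltop2}. Condition~\ref{finaltop1} is immediate since $p \circ \widetilde L$ is the continuous map $(x, t) \mapsto f(x)$. I expect the main obstacle to be condition~\ref{finaltop2}, which asks for continuity of the evaluation $W_n(r) \times_{E_n(r)} (\disk^m \times I) \to T$, $(f(x), z, x, t) \mapsto \xi_{x, t}(z)$; this will have to be checked piecewise on the collars and their complements using continuity of $L$, continuity of $\xi_x$ encoded in condition~\ref{finaltop2} applied to $K$, and continuous dependence of $c_{f, j}$ on $f$, with a Lemma~\ref{Lemma: Extension} style radial-extension argument needed to handle points just outside $\comp(f(x))$ that arise in the pullback over varying $f$.
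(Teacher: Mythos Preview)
Your approach differs from the paper's in a way that introduces a genuine gap. The paper does \emph{not} keep the $E_n(r)$-component constant: for $(x,t)$ it shrinks the radii of the inner disks of $K'(x)$ by the factor $1-t/2$, enlarges the outer radius by $t$, and rescales by $1/(1+t)$; the fresh annular regions thus created are exactly where the homotopies $h_j^x|_{[0,t]}$ are glued onto $K''(x)$. This guarantees disjoint collar regions for every $t>0$ even when the little disks of $K'(x)$ touch along their boundaries. Your decision to keep $f(x)$ fixed forces you to find disjoint collars of the boundary spheres inside the original $\comp(f(x))$, and those simply do not exist when two disks touch (the boundary is then a wedge of spheres, and the two homotopies $L_j$ and $L_k$ may disagree at the wedge point for $t>0$). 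Your appeal to locality does not help: the local criterion for Serre fibrations refers to an open cover of the \emph{base} of $q$, namely $\prod^{r+1}\Map(\mathbb{S}^{n-1},T)$, not of $E_n(r)$; restricting to $p^{-1}(U_\varepsilon)$ is a constraint on the total space, and an arbitrary lifting datum $K$ need not land there.

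There is also a smaller slip in your explicit formula: at the inner edge $s=\delta$ of the collar the second branch gives $\xi_x\!\bigl(c_{f(x),j}(y,(1-t)\delta)\bigr)$, which in general differs from the value $\xi_x\!\bigl(c_{f(x),j}(y,\delta)\bigr)$ demanded by the rule $\xi_{x,t}=\xi_x$ outside the collar, so $\xi_{x,t}$ is discontinuous there. This is repairable (use the cofibration retraction $\comp(f)\times I \to \comp(f)\times\{0\}\cup \partial\comp(f)\times I$ rather than a naive shift), but only once the touching-disks issue is resolved; the cleanest fix is precisely the paper's trick of moving the $E_n(r)$-component to manufacture the needed collar space.
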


\begin{proof}
We need to prove that for $m\ge 0$ the lifting problem
\begin{equation}
\begin{tikzcd}
\disk^m \times 0  \ar{rr}{K} \ar[swap]{dd}{} & & W_n^T(r) \ar{dd}{q} \\
& & \\
\disk^m \times I  \ar{rr}{L} \ar[dashrightarrow]{rruu}{\widetilde L} & & \prod^{r+1} \Map (\mathbb{S}^{n-1},T)
\end{tikzcd} 
\end{equation}
can be solved. 
For this we write $K(x) = (K'(x),K''(x))$ for $x\in \disk^m$, where $K'(x) \in E_n(r)$ and $K''(x) \in \Map (\comp (K'(x)),T)$. Next note that
$L$ gives us for each $x\in \disk^m$ paths $h_1^x, \dots,h_{r+1}^x$ in $\Map (\mathbb{S}^{n-1},T)$, and for $1\le j\le r+1$ the path $h_j^x$ is a homotopy of maps $\mathbb{S}^{n-1}\to T$ 
 starting at the $j$-th component $q_j (K''(x))$ of the restriction of $K''(x) : \comp (K'(x)) \to T$ to the boundary of $\comp(K'(x))$. 

The desired lift $\widetilde L : \disk^m \times I \to W_n^T(r)$ can now be described as follows: For $(x,t) \in \disk^m \times I$ the $E_n(r)$-part of $\widetilde L(x,t)$ is obtained from $K'(x)$ by enhancing the radius of the outer disk by $t$ and reducing the radii of the inner disks by multiplying them by $1-t/2$. Afterwards, we rescale by the factor $1/(1+t)$ to really obtain a point in $E_n(r)$. The needed map from the complement of this point in $E_n(r)$ to $T$ is obtained by gluing together $K''(x)$ and the restriction of the homotopies $h_1^x, \dots , h_{r+1}^x$ to $[0,t]$. 
\end{proof}

The spaces $W_n^T$ allow for a gluing map that we need in order to prove that the composition of the little bundles operad is continuous:

\begin{lemma}\label{lemmagluingmaps}
Let $r$ be a positive integer. 
For $1\leq j \leq r$ the gluing map
	\begin{align}
	\widehat{\circ}_j : W_n^T(r) \times_{  \Map(\mathbb{S}^{n-1},T)  } W_n^T(r') \to W_n^T (r+r'-1), \ \left(   \langle f,\xi\rangle  ,  \langle f',\xi'  \rangle \right) \mapsto \langle  f \circ_j f'  ,  \xi \cup_{\mathbb{S}^{n-1}}^j \xi' \rangle  
	\end{align} 
	is continuous.
	Here $f \circ_j f'$ is the operadic composition in $E_n$, and $\xi \cup_{\mathbb{S}^    {n-1} }^j \xi'$ is the map obtained from gluing $\xi$ and $\xi'$ along the $j$-th sphere $\mathbb{S}^{n-1}$ in the domain of definition of $\xi$.
	\end{lemma}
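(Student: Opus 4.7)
The plan is to verify conditions \ref{finaltop1} and \ref{finaltop2} from page~\pageref{finaltop1} for $\widehat{\circ}_j$, which by definition of the final topology on $W_n^T(r+r'-1)$ suffices for continuity.

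For \ref{finaltop1}, the composite $p \circ \widehat{\circ}_j$ factors as the continuous projection $W_n^T(r) \times_{\Map(\mathbb{S}^{n-1},T)} W_n^T(r') \to E_n(r) \times E_n(r')$ followed by the partial composition $\circ_j: E_n(r) \times E_n(r') \to E_n(r+r'-1)$, which is part of the (continuous) operad structure on $E_n$; see \cite[Chapter~4]{FresseI}.

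For \ref{finaltop2}, write $Z := W_n^T(r) \times_{\Map(\mathbb{S}^{n-1},T)} W_n^T(r')$ and consider the natural evaluation
\begin{align}
\Phi : W_n(r+r'-1) \times_{E_n(r+r'-1)} Z \to T, \quad \bigl((f \circ_j f', x),(\langle f,\xi\rangle,\langle f',\xi'\rangle)\bigr) \mapsto (\xi \cup_{\mathbb{S}^{n-1}}^j \xi')(x).
\end{align}
Let $\phi_j^{f,f'} : \disk^n \to \disk^n$ denote the affine embedding of the $j$-th little disk of $f$, depending continuously on $f$ through the explicit affine parametrization of $E_n(r)$. Define the two subsets
\begin{align}
A &:= \bigl\{\bigl((f \circ_j f', x),(\langle f,\xi\rangle,\langle f',\xi'\rangle)\bigr) \,\bigm|\, x \notin \phi_j^{f,f'}(\overset{\circ}{\disk^n})\bigr\}, \\
B &:= \bigl\{\bigl((f \circ_j f', x),(\langle f,\xi\rangle,\langle f',\xi'\rangle)\bigr) \,\bigm|\, x \in \phi_j^{f,f'}(\disk^n)\bigr\}
\end{align}
of the domain of $\Phi$. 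Both are closed, and $A \cup B$ is the whole domain since $\comp(f \circ_j f') \subseteq \comp(f) \cup \phi_j^{f,f'}(\comp(f'))$. On $A$ we have $x \in \comp(f)$, so $\Phi|_A$ factors as the continuous map $A \to W_n(r) \times_{E_n(r)} W_n^T(r)$ (forgetting $(f', \xi')$) composed with the continuous evaluation of Lemma~\ref{lemma1}. On $B$, the inverse $y := (\phi_j^{f,f'})^{-1}(x)$ lies in $\comp(f')$ and depends continuously on $(f,f',x)$, so $\Phi|_B$ factors as the continuous map $B \to W_n(r') \times_{E_n(r')} W_n^T(r')$, $\bigl((f\circ_j f', x),(\langle f,\xi\rangle,\langle f',\xi'\rangle)\bigr) \mapsto (f', y, \langle f', \xi'\rangle)$, again composed with the evaluation of Lemma~\ref{lemma1}. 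On the overlap $A \cap B$ the point $x$ lies on the $j$-th ingoing boundary sphere of $f$, where by the defining pullback condition of $Z$ the restrictions of $\xi$ and $\xi' \circ (\phi_j^{f,f'})^{-1}$ coincide, so the two formulas for $\Phi$ agree. The pasting lemma for closed covers then yields continuity of $\Phi$, establishing \ref{finaltop2}.

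The main obstacle is the compatibility on the overlap $A \cap B$: one has to recognize that the glued map $\xi \cup_{\mathbb{S}^{n-1}}^j \xi'$ is well-defined \emph{and} continuous precisely because the pullback over $\Map(\mathbb{S}^{n-1},T)$ forces the boundary values of $\xi$ and $\xi'$ to match along the $j$-th gluing sphere; everything else reduces to point-set bookkeeping using the continuous dependence of $\phi_j^{f,f'}$ on $(f,f')$ and the already-established continuity of evaluation from Lemma~\ref{lemma1}.
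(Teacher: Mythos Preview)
Your proof is correct and follows essentially the same strategy as the paper's: both reduce to checking conditions~\ref{finaltop1} and~\ref{finaltop2}, and both handle~\ref{finaltop2} by splitting $\comp(f\circ_j f')$ into the piece coming from $\comp(f)$ and the rescaled copy of $\comp(f')$, then invoking Lemma~\ref{lemma1} on each. The only cosmetic difference is that you phrase the decomposition via the pasting lemma for the closed cover $A\cup B$, whereas the paper packages the same decomposition as a pushout along $\mathbb{S}^{n-1}\times Z$ and a homeomorphism to the union of the two pieces.
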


\begin{proof}
	By definition of the topology of the spaces $W_n^T$,
	 continuity of the gluing map
	  amounts to proving that the composition with $W_n^T(r+r'-1)\to E_n(r+r'-1)$ is continuous (which is obvious)
	and that the evaluation
	\begin{align}
	W_n(r+r'-1) \times_{E_n(r+r'-1)} \left(  W_n^T(r) \times_{  \Map(\mathbb{S}^{n-1},T)  } W_n^T(r')    \right) \to T \label{eqnevacontproof}
	\end{align}
	is continuous. 
	
	The latter can be seen by factorizing \eqref{eqnevacontproof} into continuous maps. As a first observation, we describe the left hand side of \eqref{eqnevacontproof} as the pushout
\begin{equation}
\begin{footnotesize}
\begin{tikzcd}
 \mathbb{S}^{n-1}\times W_n^T(r) \times_{\Map(\mathbb{S}^{n-1},T)}   W_n^T(r') \ar{r} \ar{dd}  &  W_n(r') \times_{E_n(r')} W_n^T(r) \times_{\Map(\mathbb{S}^{n-1},T)}   W_n^T(r')  \ar{dd} 
 \\ 
& &  \\ 
 W_n(r) \times_{E_n(r)} W_n^T(r) \times_{\Map(\mathbb{S}^{n-1},T)}   W_n^T(r') \ar{r} 
 &  \substack{ \displaystyle    \left(  W_n(r) \times_{E_n(r)} \left(  W_n^T(r) \times_{\Map(\mathbb{S}^{n-1},T)}   W_n^T(r')  \right)  \right)   \\ \displaystyle  \cup \left(    W_n(r') \times_{E_n(r')} \left( W_n^T(r) \times_{\Map(\mathbb{S}^{n-1},T)}   W_n^T(r')  \right)  \right)}
\end{tikzcd}
\end{footnotesize}
\end{equation}
	where the map 
	\[\mathbb{S}^{n-1}\times E_n(r)\times E_n(r')\times W_n^T(r) \times_{\Map(\mathbb{S}^{n-1},T)}   W_n^T(r') \to  W_n(r) \times_{E_n(r)} W_n^T(r) \times_{\Map(\mathbb{S}^{n-1},T)}   W_n^T(r')\] first projects to $\mathbb{S}^{n-1}\times E_n(r)\times W_n^T(r) \times_{\Map(\mathbb{S}^{n-1},T)}   W_n^T(r')$ and then identifies $\mathbb{S}^{n-1}$ with the outer boundary sphere and the map
	\[\mathbb{S}^{n-1}\times E_n(r)\times E_n(r')\times W_n^T(r) \times_{\Map(\mathbb{S}^{n-1},T)}   W_n^T(r') \to  W_n(r') \times_{E_n(r')} W_n^T(r) \times_{\Map(\mathbb{S}^{n-1},T)}   W_n^T(r')\] first projects to $\mathbb{S}^{n-1}\times E_n(r')\times W_n^T(r) \times_{\Map(\mathbb{S}^{n-1},T)}   W_n^T(r')$ and identifies $\mathbb{S}^{n-1}$ with the $j$-th ingoing boundary sphere.
Indeed, there is a homeomorphism
\begin{align}
&	W_n(r+r'-1) \times_{E_n(r+r'-1)} \left(  W_n^T(r) \times_{  \Map(\mathbb{S}^{n-1},T)  } W_n^T(r')    \right) \\   \xrightarrow{\ \cong\ } &  \left(  W_n(r) \times_{E_n(r)} W_n^T(r) \times_{\Map(\mathbb{S}^{n-1},T)}   W_n^T(r')   \right)      \cup \left(    W_n(r') \times_{E_n(r')} W_n^T(r) \times_{\Map(\mathbb{S}^{n-1},T)}   W_n^T(r')  \right)
\end{align}
given by
\begin{align}
\left(    (f_1 \circ f_2 , x) , \langle f_1,\xi_1\rangle , \langle f_2,\xi_2\rangle  \right) \longmapsto 
\begin{cases}
\left(    (f_1  , x) , \langle f_1,\xi_1\rangle , \langle f_2,\xi_2\rangle  \right) \ , \ \text{ if $x\in \comp(f_1)$} \\ 
\left(    (f_2  , x) , \langle f_1,\xi_1\rangle , \langle f_2,\xi_2\rangle  \right) \ , \ \text{ if $x\in \comp(f_2)$} \\
\end{cases}
\end{align}
	Now \eqref{eqnevacontproof} is the composition of continuous maps
		\begin{equation}
	\begin{tikzcd}
	W_n(r+r'-1) \times_{E_n(r+r'-1)} \left(  W_n^T(r) \times_{  \Map(\mathbb{S}^{n-1},T)  } W_n^T(r')    \right) \ar{d}{\cong} \\  \left(  W_n(r) \times_{E_n(r)} W_n^T(r) \times_{\Map(\mathbb{S}^{n-1},T)}   W_n^T(r')   \right)      \cup \left(    W_n(r') \times_{E_n(r')} W_n^T(r) \times_{\Map(\mathbb{S}^{n-1},T)}   W_n^T(r')  \right) \ar{d}{\text{projection}}\\
	\left(  W_n(r) \times_{E_n(r)} W_n^T(r)\right)   \cup     \left(   W_n(r')    \times_{E_n(r')}    W_n^T(r')  \right) \ar{d}{\text{evaluation}}\\T \ .
	\end{tikzcd} 
	\end{equation} 
	\end{proof}

\subsection{Computation of (some) homotopy colimits}
In this Appendix we present the computation of a homotopy colimit 
needed for the description of the groupoid model of the little bundles operad.

First recall that for any diagram $X$ from a groupoid $\Omega$ to spaces, the homotopy colimit is given by the realization of the simplicial space with level $n$ given by \begin{align}
\coprod_{\vec{y} : [n]     \to \Omega}     X(y_0) \ , 
\end{align}
where the coproduct runs over all strings $\vec{y} : [n]     \to \Omega$ of length $n\ge 0$,
see e.g.\  \cite[Corollary~5.1.3]{riehl} for the definition of the face and degeneracy maps.

\begin{lemma}\label{lemhocolimexp}
	Let $\Gamma$ be a diagram from a groupoid $\Omega$ to groupoids.
	The homotopy colimit of $B\Gamma$
	is the nerve of a groupoid admitting the following description:
	\begin{itemize}
		\item The objects are given by pairs $(y_0,x_0)$, where $y_0 \in \Omega$ and $x_0 \in \Gamma_0(y_0)$. 
		
		\item For every pair $(g_0,f_0)$, where $g_0 : y_0 \to y_1$ is a morphism in $\Omega$ and $f_0 : x_0 \to x_1$ a morphism in $\Gamma(y_0)$, we get a morphism $(y_0,x_0) \to (y_1,g_0.x_1)$, where $g_0.x_1=\Gamma(g_0)(x_1)$.

		\item	The composition of morphisms is given by
		\begin{align}
		\left(  y_1 \xrightarrow{\  g_1    \ } y_2 ,g_0. x_1 \xrightarrow{\  f_1    \ } x_2           \right) \circ \left(  y_0 \xrightarrow{\  g_0    \ } y_1 , x_0 \xrightarrow{\  f_0    \ } x_1           \right) := 	\left(   y_0 \xrightarrow{\  g_1g_0    \ } y_1       , x_0 \xrightarrow{\ (g_0^{-1} .   f_1) f_0     \ } g_0^{-1}x_2          \right)
		\end{align} \label{lemhocolimexp1}
	\end{itemize}

\end{lemma}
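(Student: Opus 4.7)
The strategy is to identify the homotopy colimit with the nerve of the Grothendieck construction $\int \Gamma$, and then to match $\int \Gamma$ with the explicit groupoid $\mathcal{G}$ described in the statement.

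First, by Thomason's Theorem \cite[Theorem~1.2]{thomason} (already invoked in the proof of Proposition~\ref{propohomotopyw3}) there is a natural equivalence
\begin{align}
\hocolimsub{y \in \Omega} B\Gamma(y) \simeq B \textstyle\int \Gamma \ ,
\end{align}
so it suffices to exhibit an isomorphism between $\int \Gamma$ and $\mathcal{G}$. Recall the usual description of the (covariant) Grothendieck construction: objects of $\int \Gamma$ are pairs $(y,x)$ with $y \in \Omega$ and $x \in \Gamma(y)$; a morphism $(y_0,x_0) \to (y_1,x_1)$ is a pair $(g,h)$ with $g : y_0 \to y_1$ in $\Omega$ and $h : g.x_0 \to x_1$ in $\Gamma(y_1)$; composition reads $(g_1,h_1) \circ (g_0,h_0) = (g_1 g_0,\, h_1 \circ g_1.h_0)$.

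I would then construct an isomorphism $\Phi : \int \Gamma \to \mathcal{G}$ which is the identity on objects and which reparametrizes morphisms as follows. Because $\Omega$ is a groupoid, the functor $\Gamma(g) : \Gamma(y_0) \to \Gamma(y_1)$ associated to $g : y_0 \to y_1$ is invertible, so the datum $h : g.x_0 \to x_1$ in $\Gamma(y_1)$ corresponds bijectively to $f := g^{-1}.h : x_0 \to g^{-1}.x_1$ in $\Gamma(y_0)$. Setting $x_1' := g^{-1}.x_1$ so that $x_1 = g.x_1'$, the morphism is then specified by the pair $(g, f : x_0 \to x_1')$ with target $(y_1, g.x_1')$, which is exactly the presentation given in the lemma. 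Composition compatibility is a short calculation: given composable $\mathcal{G}$-morphisms $(g_0, f_0 : x_0 \to x_1)$ and $(g_1, f_1 : g_0.x_1 \to x_2)$, the corresponding standard Grothendieck morphisms are $(g_0, g_0.f_0)$ and $(g_1, g_1.f_1)$, and their composite $(g_1 g_0,\, g_1.f_1 \circ (g_1 g_0).f_0)$, whose target equals $(y_2, g_1.x_2) = (y_2, (g_1 g_0).(g_0^{-1}.x_2))$, translates back under $\Phi$ to $(g_1 g_0,\, (g_0^{-1}.f_1) \circ f_0)$. This reproduces the displayed composition formula in the statement.

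The main obstacle will be the coherence bookkeeping when $\Gamma$ is merely a pseudofunctor rather than a strict one: identities such as $g^{-1}.(g.x) = x$ and their morphism analogues then hold only up to the canonical coherence isomorphisms of $\Gamma$. This is not a genuine obstruction, because every $\Grpd$-valued pseudofunctor on $\Omega$ is equivalent to a strict one, which changes neither $\hocolimsub{y\in\Omega} B\Gamma(y)$ nor $\mathcal{G}$ up to equivalence; alternatively, one may carry the coherences through the verification, where they enter only as canonical isomorphisms that do not affect the resulting equivalence of groupoids.
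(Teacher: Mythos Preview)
Your proposal is correct and arrives at the same conclusion, but by a genuinely different route than the paper. The paper proceeds by a direct computation: it models $\hocolimsub{\Omega} B\Gamma$ by the Bousfield--Kan simplicial replacement, takes the diagonal of the resulting bisimplicial set so that the $n$-simplices are pairs (string of length $n$ in $\Omega$ starting at $y_0$, string of length $n$ in $\Gamma(y_0)$), and then reads off the face maps in low degrees to recognize this diagonal as the nerve of the groupoid $\mathcal{G}$ on the nose. Your approach is more conceptual: you invoke Thomason's theorem to identify the homotopy colimit with $B\!\int\!\Gamma$ up to equivalence, and then reparametrize the morphisms of the standard Grothendieck construction (using that $\Omega$ is a groupoid) to obtain the presentation $\mathcal{G}$. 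Your route makes the relation to $\int\!\Gamma$ explicit---which is helpful given that the Grothendieck construction already features in Proposition~\ref{propohomotopyw3}---at the cost of yielding only an equivalence rather than an identification; the paper's route is shorter and gives the statement literally, but hides the connection to $\int\!\Gamma$. Either is adequate for the application in Lemma~\ref{corpresentationw2g}, which only needs the equivalence class of $\Pi W_2^G(r)$.
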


\begin{proof}
	As just explained, the desired homotopy colimit is the realization of the simplicial space with 
	\begin{align}
	\coprod_{\vec{y} : [n]     \to \Omega}     B\Gamma (y_0) 
	\end{align}
	in level $n$. Since the realization can be computed as the diagonal, we find
	\begin{align}
	\left(	\underset{\Omega}{\hocolim}\, B\Gamma  \right)_n = 	\coprod_{\vec{y} : [n]     \to \Omega}     B_n\Gamma (y_0)  \ .
	\end{align}
	Carefully writing out the low degree face and degeneracy maps yields the claim.
\end{proof}

\end{appendix}

\end{document}